\def\bE{{\mathbb E}}
\def\bP{{\mathbb P}}
\def\bR{{\mathbb R}}
\def\bN{{\mathbb N}}
\def\bS{\mathbb{S}}
\def\cP{{\mathcal P}}
\def\cG{{\mathcal G}}
\def\cF {\mathcal{F}}
\def\cC {\mathcal{C}}
\def\cS {\mathcal{S}}
 \def\llb{\llbracket}
\def\rrb{\rrbracket}
\def\modxu{x \ \text{mod} \ u}
\def\modsu{s \ \text{mod} \ u}
\def\modsi{s \ \text{mod} \ \lambda_i}
\def\mod{\text{mod}}
\def\modS2{s_2 \ \text{mod} \ 1}
\begin{document}



\section{Introduction}

Grid cells are particular neurons in medial enthorinal cortex \cite{McNaughton_Moser_nature_review_06}  that exhibit a periodic spatial firing pattern. Whereas place cells in the hippocampus fire at a given location and appear to be associated with an allocentric representation, grid cells fire at each node of a hexagonal lattice and appear to be involved in a self-localization representation  \cite{Moser_Kropff_Moser_08}. The grid cell system not only encodes spatial position but also direction and velocity \cite{Sargolini_2006} or sounds \cite{Aronov_2017} and could even be used by mammals to encode episodic memories \cite{Buzaki_Moser_2013}. Organized in modules (one module being dedicated to one lattice  scale), the grid cells in each module have a uniform distribution, while the progression of scales between modules appears to be geometric \cite{stensola_et_al_12}.

Since their discovery in 2005 \cite{Moser_2005}, which earned O'Keefe and the Mosers the Nobel Prize, grid cells have been intensively studied from a theoretical point of view in relation to place cells \cite{Okeefe_1971}.
Some authors (see for example \cite{Schwarz_koy_2019}) are interested in how neural networks can generate such patterns, while others try to explain the hexagonal lattice \cite{chen} or the exact geometric progression of scales \cite{Wei_2015,Stemmler_2015}. 

In particular, one of the main focuses has been the encoding capacity of the system. More specifically, the authors focused on a statistical capacity measure, namely the Fisher information, because of its link with estimation. Indeed in Statistics,  Cramér-Rao bound states that the $L_2$-error
(i.e., $\sqrt{\mathbb{E}(|\hat{s}_n-s|^2)})$ of an unbiased estimator $\hat{s}_n$ (i.e. , $\mathbb{E}(\hat{s}_n)=s$)) of a given quantity $s$ is lower bounded by $I_n(s)^{-1/2}$, where $I_n(s)$ is  the Fisher information. Moreover, this lower bound is generally achieved by Maximum Likelihood Estimators (MLE), at least asymptotically in a context of $n$ i.i.d. observations \cite{CoxHink74}.
Informally, the Cramér-Rao bound is interpreted as follows: there is a "best" estimator (which would be the MLE) that would achieve the smallest error $I_n(s)^{-1/2}$. So if  a code refers to a model that describes the  precise influence of $s$ on the spike trains emitted by the neural cells, Cram\`er-Rao bounds paves the path of finding the best code as the code that maximizes the Fisher information.

Initially, \cite{Brunel_1998} worked on the relationship between mutual information and Fisher information for place cells and other types of neurons with receptive fields. They showed in particular that if $s$ represents a  position, then $I_n(s)$ grows linearly with $n$, for a particular code where the $n$ place cells responses to position $s$ are i.i.d. This leads to an error in $n^{-1/2}$.
Then Fiete and her co-authors \cite{Fiete_2008,Fiete_2011} showed that for a given number $n$ of neurons, grid cells can encode many more positions and that $I_n(s)$ grows exponentially with $n$ the number of place cells. 
In particular, this means that the accuracy of the position estimator that can be done with $n$ grid cells is much more precise than the accuracy that can be done with $n$ place cells (see also these related works \cite{Vago_2018, Mathis_2012, Mosheiff_2017}).

In the present work, we take a different statistical viewpoint from that of unbiased estimation using Fisher information: we take a testing approach with a minimax viewpoint. 
Our first argument is that estimation is very complex in fact. For instance, when using Cramér-Rao bound, one has to be aware that it applies only to unbiased estimators  and  the Stein phenomenon shows that biased estimator might sometimes be faster that $I_n(s)^{-1/2}$ 
\cite{Tsybakov}. Minimax theory \cite{Tsybakov} can help to shed more lights on the right order of magnitude of the error by computing the minimal value of $\max_{s}\sqrt{\mathbb{E}(|\hat{s}_n-s|^2)}$.
Our second argument is that pointwise testing (e.g. testing a point like ``$s=0$" for instance) is easier than estimation: imagine an estimator $\hat{s}_n$ of $s$ whose variance depends on $s$ itself. When testing $s=0$, we know the variance of $\hat{s}_n$ under this hypothesis  and we can therefore use this to describe a rejection region for the test. Going further,  the fact that tests are easier to build than estimation, can lead to surprising theoretical effects: depending on the regularity of the $s$ to estimate, it has been shown that typically one makes a minimax estimation error of say $\Delta_n$, with $n$ the number of observations, whereas there exists some test that can detect that $s\neq 0$ as soon as the distance between $s$ and $0$ is larger than $\rho_n$, with $\rho_n$ that is negligible with respect to $\Delta_n$ \cite{Ingster_1993, BHL_2003}. The difference is more than a mere multiplicative constant: testing rates can be faster than estimation rates. Therefore we want to adopt this testing point of view to see if it can improve our understanding of the place cells/grid cells code.

From this testing point of view, we think that
a good encoding system should be able to discriminate quickly between two stimuli or positions $s_1$ and $s_2$, as soon as they are sufficiently apart. In particular, the testing procedure can take into account at which distance $s_1$ is from $s_2$ (information that cannot be taken into account, at least as explicitly,  in an estimation procedure) and one can have a discrimination time between two points that depends on this distance.

Therefore the purpose of the present work is to  study the following three theoretical problems. To make things more concrete, imagine a rat in a maze who should learn a certain behavior in position $s_1$ and another one in position $s_2$. We give for each problem the ``rat" interpretation with respect to this situation. 

\bigskip
\hspace*{-0.5cm} \fbox{%
   \begin{minipage}{0.95\textwidth}
\begin{definition}
 Given $n$ neurons obeying a certain stochastic model parametrized by a code $f(s)$ in response to a stimulus/position $s\in \mathbb{S}$ presented to the system for a duration $T$, 
we define the \textit{minimal discrimination time} between two locations $s_1$ and $s_2$ for the code $f$, with precision $\alpha$ (denoted  $T_{min}(f,s_1,s_2,\alpha)$), as the minimal time the output of the $n$ neurons must be observed in order to distinguish $s_1$ from $s_2$ with a probability of error less than $\alpha \in (0,1)$. 
\end{definition}
\end{minipage}
}

\bigskip

\textbf{Problem 1} consists in understanding the behavior of the minimal discrimination time $T_{min}(f,s_1,s_2,\alpha)$. 
If this minimal time is infinite, it expresses in particular the fact that the coding system cannot distinguish $s_1$ from $s_2$. More precisely, this quantity also expresses how the minimal discrimination time decreases when the distance between $s_1$ and $s_2$ increases. Note in particular that the test which is able to achieve this minimal time can depend on the precise knowledge of $s_1$ and $s_2$ because we define this as a test and this will be the case in our solution. From the ``rat" perspective, the discrimination time between the two positions, is a good lower bound for the reaction time of the rat to this given task, because this is only after realising that it is in position $s_1$ or $s_2$ that the rat can proceed to the learned associated behavior.

\bigskip
\hspace*{-0.5cm}\fbox{%
   \begin{minipage}{0.95\textwidth}
\begin{definition}
We equip $\mathbb{S}$ with a certain metric $d$. We define the \textit{minimax discrimination time} of a family of codes $\mathcal{F}$ at distance $\rho$ by \begin{equation}\label{minimax}
T(\mathcal{F}, \rho,\alpha)=\inf_{f\in \mathcal{F}} \sup_{s_1,s_2 \in \mathbb{S}: \ d(s_1,s_2)\geq \rho} T_{min}(f,s_1,s_2,\alpha).
\end{equation}
This quantity can be seen as the  rate at which the best code $f$ in a certain family $\mathcal{F}$ (for instance place cells or grid cells) can discriminate all stimuli at distance $\rho$ or more.
\end{definition}
\end{minipage}
}

\bigskip
\textbf{Problem 2} consists in computing upper and lower bounds for this minimax discrimination time. In particular, this rate is a function  of $n$ and $\rho$ and it is not clear whether the best code $f$ depends on $\rho$ or not.
From the ``rat" perspective, it implies that the important parameter for the distinction between $s_1$ and $s_2$ is  the distance: if the brain ``uses'' the best code in the family $\mathcal{F}$ at distance at least $\rho$", then as soon as $d(s_1,s_2)\geq \rho$, one can guarantee a reaction time that is at least $T(\mathcal{F}, \rho,\alpha)$. From a modeling point of view, this raises a good question: why would the brain ``use the best code in the family $\mathcal{F}$ at  distance at least $\rho$" ?. The important part in the previous sentence is in particular: why would the brain focus on one particular $\rho$? This leads to the third problem: adaptation.

\bigskip
\hspace*{-0.5cm}\fbox{%
   \begin{minipage}{0.95\textwidth}
\begin{definition}
A code $f\in \mathcal{F}$ is said to be adaptive if it achieves the rate defined in \eqref{minimax}, up to multiplicative constants, in a given range of values for $\rho$, that is,
\begin{equation}\label{adapt}
\forall \rho \ \mbox{in a given range},  \sup_{s_1,s_2 \in \mathbb{S} : \ d(s_1,s_2)\geq \rho} T_{min}(f,s_1,s_2,\alpha)\simeq T(\mathcal{F}, \rho,\alpha).
\end{equation}
\end{definition}
\end{minipage}
}

\bigskip

\textbf{Problem 3} consists in finding an adaptive code and the corresponding range of $\rho$ for different family of codes. 
Here the word \textit{adaptation} is meant in the sense of theoretical statistics/minimax theory \cite{Ingster_1993,BHL_2003,Tsybakov}. 
From the "rat" perspective, adaptation (in the previous statistical sense) is fundamental. We can indeed pinpoint two scenarios about the learning. \textit{In Scenario 1}, before even learning, the system (place cells or grid cells) can achieve for many $\rho$ the best discrimination time (adaptive code). The only thing that the rat has to learn is the specific response of the cells to position $s_1$ and $s_2$ to perform the ``best" discrimination test and decide what is the correct behavior. This scenario has the advantage of minimal training time: if a new  couple $(s_1,s_2)$ is presented, the learning time should be very quick. In \textit{Scenario 2}, the system is not adaptive and then each time a new couple $(s_1,s_2)$ at a different distance is proposed, the rat either is stuck with  a suboptimal code leading to a suboptimal discrimation time, or it has to learn a new representation/new code at the same time as the new couple to react faster. Scenario 2 is of course less "adaptive" because the rat would take a longer time to react/learn.

Solving these three theoretical problems should help us to distinguish between several types of codes and in particular between place cells and grid cells. The minimal discrimination time computed in Problem 1 is an idealized reference of a certain reaction time, which depends on the coding system. It is already slightly more complex than simple estimation accuracy since it encompasses the idea that if the stimuli are very different, the reaction time should be faster. Second, the minimax time in Problem 2 should in particular tell us which system seems to be more competitive than the other. Finally, the adaptation viewpoint in Problem 3 should give a more specific viewpoint on the practical use of each system to see if it is possible to discriminate stimuli at different scales with the same code.

To push the mathematics as far as possible, we use a very simple stochastic model: the spike trains of the $n$ neurons are homogeneous Poisson processes (i.e. constant firing rates in time, that only depends on the position/stimulus $s$) with coding performed only via their respective firing rates. We also idealize the rate code $f$ into simple step functions with only two values and used the circle as the stimulus/position set (which is consistent with, for example, a 1D circular maze or with the direction of movement \cite{Georgopoulos}).

We have been able to completely answer all three problems for the place cells code, for which adaptation is possible. All three problems are also solved for grid cell codes  when the number of cells per module and scales are fixed, but  minimax rate and adaptation for general grid cells code, where even the number of modules is let free is still an open problem. In particular, we have shown that the minimal distance $\rho$ that can be detected by $n$ place cells is of order $1/n$, a distance that is much smaller than 
the rate $n^{-1/2}$ obtained via the classical use of Fisher information
(see \cite{Brunel_1998}).  It also appears that grid cells have much better resolution than place cells, up to $2^{-n}$, which is consistent with the bounds given by \cite{Fiete_2011}. Grid cells may also be faster for discrimination than place cells, for a fixed $\rho$, if the system is well calibrated. However, it does not seem clear that there is an adaptive code for general grid cells, and in this sense, place cells might have an advantage in terms of reaction time for sufficiently distant stimuli/positions.

In Section 2, we give the stochastic model, the main notation and compute the minimal discrimination time (Problem 1) in a very general setting, as well as a lower bound in $2^{-n}$ on   the smallest distance $\rho$ that can be discriminated whatever the code. In Section 3, we study more deeply the place cells code, compute minimax rates (Problem 2) in $\lfloor n \rho\rfloor ^{-1}$ and prove that even random codes are adaptive  in this setting (Problem 3).  In Section 4, we study grid cells code and prove that it can reach the resolution $2^{-n}$ and that another grid code can also achieve the rate $\lfloor n /\log_2(1/\rho) \rfloor ^{-1}$. Numerical illustration is provided in Section 5. Conclusion, Discussion and Perspectives are given in Section 6. Auxiliary results are postponed to Section 7.

\section{Discrimination time}

\subsection{Model and notation}
Since we are interested in grid cells that have a periodic feature, it is simpler to represent stimuli as a circle than as an interval. 

We consider stimulus/position $s$ that belong to $\mathbb{S}^1=[0,1)$, that is considered in a periodic way, i.e. $0\equiv 1$. Equivalently, we can represent this set of stimuli/position as a circle. 

\begin{figure}
    \centering
    \includegraphics[width=0.95\textwidth]{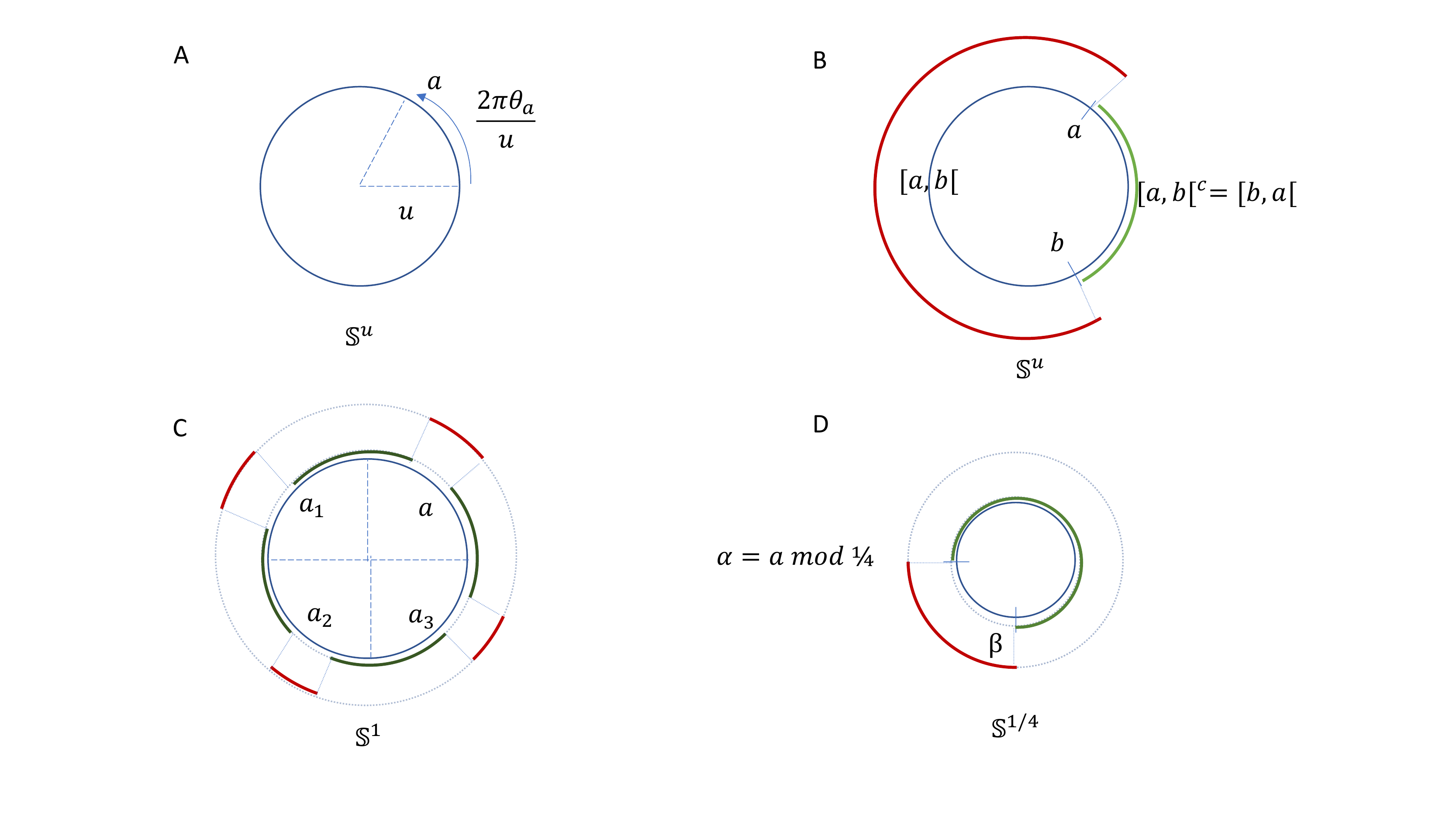}
    \caption{Visual representation of the main notions. In A,  $\mathbb{S}^u$, the circle of radius $u$ with a point $a$ and its corresponding argument $\theta_a\in [0,u)$ 
    In B, the visual representation of the intervals. In red and green, the visual representation of the function $s \mapsto g(s)=\mu_{\llb a,b\llb }(s)+1_{\llb a,b\llb ^c}(s)$, with the value $\mu$ in red and $1$ in green. In C and D, visual representation of the action of $mod$. In these pictures, $\alpha=a~\mathrm{mod}~1/4=a_1~\mathrm{mod}~ 1/4=a_2~\mathrm{mod}~1/4 = a_3~\mathrm{mod}~1/4$. Also in D, the representation of the function $s\mapsto g_{1/4}(s)=\mu 1_{\llb \alpha,\beta\llb }(s)+1_{\llb \alpha,\beta\llb^c}(s)$ with the same color code as in B. In C, the representation of the periodic function $s \mapsto g(s)=g_{1/4}(\modsu) = \mu 1_{\llb\alpha,\beta\llb}(s~\mathrm{mod}~1/4)+1_{\llb \alpha,\beta\llb ^c}(s~\mathrm{mod}~1/4)$}
    \label{fig:rep}
\end{figure}

The stimulus is encoded by $n$ neurons, which emit spikes as \textit{independent homogeneous Poisson processes}. By homogeneous Poisson process, we mean that if the stimulus $s$ remains constant through time, then the firing rate of the Poisson process is also constant through time. More specifically, for a given stimulus $s$ in $\mathbb{S}^1$,  each neuron $i$ has a firing rate $f_i(s)$, which only depends on $s$, as long as the stimulus $s$ is presented.

In the literature, to model cells with receptor fields, these functions $f_i(s)$ are most of the time centered around a certain favorite stimulus for which the value is very large, whereas it returns to some very small rate when it is far from this favorite stimulus.

To simplify the mathematical computations, we decide to use piecewise constant functions to model $f_i$. More precisely, these functions only take two values $\mu$ and $1$ with $\mu$ much larger than 1. We say that neuron $i$ responds to stimulus $s$ or is activated by $s$ when $f_i(s)=\mu$. We denote the code $f=(f_1,...,f_n)$, the vector of the firing rates and $I^f_s$ is the set of neurons responding to $s$ that is
\[I^f_s=\{i \in [n]~:~ f_i(s)=\mu\},\]
with $[n]$ the short notation for $\{1,...,n\}$. 
In what follows, for any subset $B\subseteq [n]$, we denote $|B|$ the cardinality of the set B.

To define more precise the piecewise constant functions $f_i$, especially for the grid cells code, we need to introduce further mathematical notation. Let  $\mathbb{S}^u$ be the circle of radius $u$, which can be put in one-to-one map with the segment $[0,u)$, through the argument $a\mapsto \theta_a$ as seen in Figure \ref{fig:rep}.A. More mathematically, for a given $a \in \mathbb{S}^u$, $\theta_a$ is the only value in $[0,u)$ such that $a$ corresponds to the point $u(\cos(2\pi\theta_a/u),\sin(2\pi\theta_a/u))$. The distance we use on $\mathbb{S}^u$ is the geodesic distance on the circle divided by $2\pi$. This can also be viewed as $d_u(a,b)=\min(|\theta_a-\theta_b|,(u-|\theta_a-\theta_b|))$. At most, it is $u/2$. When $u=1$, we write $d$ instead of $d_1$ for short. Observe that, in $\mathbb{S}^1$, the largest distance one can have is $1/2$. The interval $\llb a, b\llb$ is defined as the set of all points $s$ such that $\theta_s\in [\theta_a,\theta_b)$ if $\theta_a<\theta_b$, and the set of all points the $s$ such that $\theta_s\notin [\theta_b,\theta_a)$ if $\theta_b<\theta_a$
with the convention that $\llb a, a\llb=\emptyset$ is the empty set. Note that the complementary of $\llb a, b\llb$, satisfies $\llb a, b\llb^c=\llb b, a \llb$. 
See also Figure \ref{fig:rep}.B.

The first code that we are interested in, corresponds (via this piecewise constant simplification) to classical neurons having a certain receptor field, or to place cells, as mentioned in the introduction.

\paragraph{Place cells code $\mathcal{P}$.}
A code $f$ is a place cells code if and only if for all $i$ in $[n]=\{1,...,n\}$, there exist $a_i,b_i$ in $\mathbb{S}^1$ such that for all $s$ in $\mathbb{S}^1$,
\begin{equation}
\label{place-def}
 f_i(s)= \mu 1_{\llb a_i,b_i\llb }(s)+1_{\llb a_i,b_i\llb^c}(s).    
\end{equation}
 With this definition, we can assimilate the receptor field of neuron $i$ with the interval $\llb a_i,b_i\llb$. A typical representation of this place cells code can be seen Figure \ref{fig:rep}.B

\bigskip

Grid cells have a periodic structure. Therefore, we need to define properly periodic functions as well. To do so, we use the modulus operation. It is defined for $x\in\bR$ and $u\in\bR_{>0}$ by $\modxu=x-\lfloor x/u\rfloor u \in [0,u) 
$, the remainder of the euclidean division of $x$ by $u,$ where $\lfloor x/u\rfloor$ is the largest integer less than or equal to $x/u$. Informally, a function $g$ of period $u$ on $\mathbb{S}^1$ is a function that only depends on the value of the stimulus $s \ \mathrm{mod}\ u$, i.e. $g(s)=g(s \ \mathrm{mod}\ u)$ for all $s\in \mathbb{S}^1$. 
Let us define module on circles more formally:
for radii $u,v\in\bR_{>0}$ and any $s\in\bS^v$, we denote $t=\modsu$ the point in $\bS^u$ such that $\theta^u_t=\theta^v_s \ \text{mod} \ u.$ 
    To give an intuition of what the modulus operation does, we put an example in Figure \ref{fig:rep}C and D.
In this sense, we can formally define periodic functions $s \mapsto g(s)$  on $\mathbb{S}^1$ with period $u$ by saying that $g(s)=g_u(\modsu)$, for some function $g_u$ on $\mathbb{S}^u$.

However, if we cannot divide $\mathbb{S}^1$ in an integer number of intervals of 
length $u$, the periodic functions are not completely well defined. This remark leads to the restriction on the $\lambda_i$'s in the following definition of the grid cells code.

\paragraph{Grid cells code $\mathcal{G}((n_i,\lambda_i)_{i=1,...,m})$.} 
\cite{stensola_et_al_12} showed experimentally that grid cells are grouped by modules. A given module is dedicated to a certain scale (or spatial periodicity of the firing pattern). Once the scale is fixed the exact localisation of the centers of the grid does not seem to show any particular structure. But the progression of the scales seems to be done in quantized manner. We model this as follows.
A code $f$ is a grid cells code with $m$ modules of cardinals $(n_i)_{i=1,...,m}$ and scales $(\lambda_i)_{i=1,...,m}$ if and only if the $n$ neurons are grouped into $m$ modules $M_1,\ldots, M_m$ so that there is  $n_i:=|M_i|$ cells per module $i$ with $\sum_{i=1}^m n_i=n$ and all these $n_i$ cells have a periodic code of period $\lambda_i$.
More precisely, for a given neuron $j\in M_i$, there exists $a_{i,j}$ and $b_{i,j}$ in $\mathbb{S}^{\lambda_i}$, the circle of radius $\lambda_i$, such that for all $s\in \mathbb{S}^1$
\begin{equation}
\label{grid-def}
f_{i,j}(s)=
\mu 1_{\llb a_{i,j},b_{i,j}\llb}(\modsi)+1_{\llb a_{i,j},b_{i,j}\llb^c}(\modsi)
\end{equation}
To ensure coherence of the respective periods in each module, we assume that $1:=\lambda_1>\lambda_2>\ldots >\lambda_m$ are real positive numbers satisfying the following relations: $\lambda^{-1}_i\in\bN_{>0}$ and $\lambda_{i}\lambda^{-1}_{i+1}\in\bN_{>0}$. In particular $\lambda_{i+1}\leq \lambda_i/2$ for all module $i$.
A typical representation of this grid cells code can be seen Figure \ref{fig:rep}.C.

\paragraph{Unspecified grid cells code $\bar{\cG}$.} If we want to consider a grid cell class for which the number of modules, scales and number of cells per module are not specified, one can consider
\[\bar{\cG}=\cup_{m=1,...,n}\cup_{n_1+...+n_m=n} \cup_{\lambda_1>...>\lambda_m }\mathcal{G}((n_i,\lambda_i)_{i=1,...,m}), \]
where the last union is taken only on the $\lambda_i$'s such that $ \lambda_i\lambda_{i+1}^{-1} \in \bN_>0$ for $i<m$.

\medskip

Note that grid cells code with only one module are place cells code and this also justifies why it was easier to consider stimulus on the circle in the first place.

Some of the results we are going to prove also hold for more general binary codes.

\paragraph{General binary code}
A code $f$ is a general binary code if and only if for all $i$ in $[n]=\{1,...,n\}$, $s\mapsto f_i(s)$ is a piecewise constant function on $\mathbb{S}^1$ taking only two possible values, $\mu$ and $1$. 
Place cells code and grid cells code are just particular cases of general binary codes. In a general binary code, the set of stimulus to which neuron $i$ responds forms a borelian of $\mathbb{S}^1$. 

Notice that the choice of $1$ as the smallest of the two possible rates for the code is to simplify computations, but one can always transform the data to be in this case. Indeed, the time-changing theorem \cite{brown_2002} allows us to change time in order to fix the smallest rate at 1.

\subsection{The statistical testing problem}

A stimulus $s$ is applied for a time $T$ and it results from the stimulus $s$,  $n$ spike trains for the $n$ different neurons, i.e. $N^1,...,N^n$, the $n$ independent Poisson processes on $[0,T]$. We consider that the individual (or the agent or the brain) has only these spike trains as source of information on $s$ and that it tries to use the best statistical tool available, based on $N^1,...,N^n$. This philosophy gives us an ideal bound on the performance that the brain can do with one or another encoding system. It is typically used by \cite{Brunel_1998} to say that the inverse of the Fisher Information gives  the smallest variance of an estimator of the stimulus (Cramer-Rao bound) and that maximizing the Fisher Information gives the best code. We want to basically adopt the same point of view but for testing instead of estimating.

In the testing problem, given $s_1$ and $s_2$ two possible values for $s$, the individual has to guess that $s=s_1$ or $s=s_2$ based solely on $N^1,...,N^n$. Mathematically, it means that this guess is  a test
$\Phi=\Phi(N^1,...,N^n)$ that can only take two values  $s_1$ or  $s_2$.
The individual can make two mistakes $\mathbb{P}_{s_1}(\Phi=s_2)$ that is the probability that the guess is $s_2$ whereas the applied stimulus is $s_1$, and reciprocally $\mathbb{P}_{s_2}(\Phi=s_1)$, that is the probability that the guess is $s_1$ whereas the applied stimulus is $s_2$.
There are varieties of possible tests and we are interested only by the ones, which have (up to multiplicative constants) the smallest possible error, that is we want to find $\Phi$ such that it achieves
\begin{equation}\label{proba_error}
p_e(s_1,s_2)=\min_{\Phi \mathrm{~test~of ~}s_1 \mathrm{~vs ~} s_2} p_{e,\Phi}\quad\mbox{ with }\quad p_{e,\Phi}= \max\left[\mathbb{P}_{s_1}(\Phi=s_2),\mathbb{P}_{s_2}(\Phi=s_1)\right].
\end{equation}

\paragraph{Order of magnitude of $p_{e}$}

For $s_1,s_2 \in \bS^{1}$, we denote 
$\Delta_{s_1,s_2}^f=\max(|I^f_{s_1}\setminus I^f_{s_2}|, |I^f_{s_2}\setminus I^f_{s_1}|).$

\begin{proposition}
\label{bounds_pe}
For all $T>0$, $\mu>1$ and $s_1,s_2 \in \bS^1$
\[
 \max\left\{\frac{\exp\left[-T\tilde{C}_{\mu}\Delta_{s_1,s_2}^f\right]}{4}, \frac{1-\sqrt{T\tilde{C}_{\mu}\Delta_{s_1,s_2}^f/2}}{2} \right\} \ \leq \quad p_e(s_1,s_2) \ \leq \ \exp\left[-T  C_{\mu} \Delta_{s_1,s_2}^f\right], \]
with $C_{\mu}=\frac{(\mu-1)^2}{4}\min\left\{\frac{1}{2\mu}, \frac{3}{5+\mu}\right\}$ and $\tilde{C}_{\mu}=(\mu-1)\log(\mu).$ 

\end{proposition}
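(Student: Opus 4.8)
The plan is to read this off as a test between two simple hypotheses, namely the product laws $P_{s_1}=\bigotimes_{i=1}^n P^i_{s_1}$ and $P_{s_2}=\bigotimes_{i=1}^n P^i_{s_2}$ on the space of $n$-tuples of point processes on $[0,T]$, where under $P^i_{s_j}$ the $i$-th coordinate is a homogeneous Poisson process of rate $f_i(s_j)\in\{1,\mu\}$. The coordinates with $f_i(s_1)=f_i(s_2)$ are uninformative, so the whole problem lives on the two sets $J_1:=I^f_{s_1}\setminus I^f_{s_2}$ (rate $\mu$ under $s_1$, rate $1$ under $s_2$) and $J_2:=I^f_{s_2}\setminus I^f_{s_1}$ (the reverse); up to exchanging the roles of $s_1$ and $s_2$ I may assume $|J_1|\ge|J_2|$, so that $|J_1|=\Delta_{s_1,s_2}^f$.

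\emph{Upper bound.} It is enough to exhibit one test with the right error exponent. I would take $X=\sum_{i\in J_1}N^i_{[0,T]}$, the total number of spikes emitted by the neurons of $J_1$ up to time $T$, which is $\mathrm{Poisson}(\mu T\Delta_{s_1,s_2}^f)$ under $s_1$ and $\mathrm{Poisson}(T\Delta_{s_1,s_2}^f)$ under $s_2$, and decide ``$s_1$'' iff $X\ge\tau$ with $\tau$ of the order of the arithmetic midpoint $\tfrac12 T\Delta_{s_1,s_2}^f(\mu+1)$. The two errors $\mathbb{P}_{s_2}(X\ge\tau)$ and $\mathbb{P}_{s_1}(X<\tau)$ are then controlled by exponential (Chernoff/Bernstein-type) inequalities for the upper and lower deviations of a Poisson variable: on the low-rate ($s_2$) side the variance proxy is $T\Delta_{s_1,s_2}^f$ and the sub-exponential correction produces the factor $\tfrac{3}{5+\mu}$, while on the high-rate ($s_1$) side the variance proxy is $\mu T\Delta_{s_1,s_2}^f$ and the Gaussian regime produces the factor $\tfrac{1}{2\mu}$; since $p_{e,\Phi}$ is the maximum of the two errors its exponent is the minimum of the two rates, which is exactly $C_\mu T\Delta_{s_1,s_2}^f$, the prefactor $\tfrac{(\mu-1)^2}{4}$ being the square of the half-gap $\tfrac{\mu-1}{2}$ between the two means (per unit of $\Delta_{s_1,s_2}^f$). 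One can instead bound $p_e\le 1-\|P_{s_1}-P_{s_2}\|_{\mathrm{TV}}\le\exp[-\tfrac T2(\sqrt\mu-1)^2(|J_1|+|J_2|)]$ via the Hellinger affinity $\prod_{i=1}^n\exp[-\tfrac T2(\sqrt{f_i(s_1)}-\sqrt{f_i(s_2)})^2]$ of product Poisson laws, and then check the elementary facts $\tfrac12(\sqrt\mu-1)^2\ge C_\mu$ and $|J_1|+|J_2|\ge\Delta_{s_1,s_2}^f$; this is cleaner but yields a less explicit constant.

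\emph{Lower bound.} Here I would use the two-point (Le Cam) method. Since $p_{e,\Phi}\ge\tfrac12[\mathbb{P}_{s_1}(\Phi=s_2)+\mathbb{P}_{s_2}(\Phi=s_1)]$ and the minimal total error of a test equals $1-\|P_{s_1}-P_{s_2}\|_{\mathrm{TV}}$, one gets $p_e(s_1,s_2)\ge\tfrac12(1-\|P_{s_1}-P_{s_2}\|_{\mathrm{TV}})$, and it remains to bound the total variation distance between the two product Poisson laws. A direct computation gives $\mathrm{KL}(P_{s_1}\|P_{s_2})=T[\,|J_1|(\mu\log\mu-\mu+1)+|J_2|(\mu-1-\log\mu)\,]$, and the same expression with $|J_1|$ and $|J_2|$ exchanged for $\mathrm{KL}(P_{s_2}\|P_{s_1})$; since total variation is symmetric I may keep the smaller of the two, and because $\mu\log\mu-\mu+1\ge\mu-1-\log\mu$, $|J_2|\le|J_1|=\Delta_{s_1,s_2}^f$, and $(\mu\log\mu-\mu+1)+(\mu-1-\log\mu)=(\mu-1)\log\mu=\tilde{C}_\mu$, this smaller one is at most $T\tilde{C}_\mu\Delta_{s_1,s_2}^f$. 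Feeding this into Pinsker's inequality $\|P_{s_1}-P_{s_2}\|_{\mathrm{TV}}\le\sqrt{\mathrm{KL}/2}$ yields the term $\tfrac12(1-\sqrt{T\tilde{C}_\mu\Delta_{s_1,s_2}^f/2})$, and feeding it into the Bretagnolle--Huber inequality $\|P_{s_1}-P_{s_2}\|_{\mathrm{TV}}\le\sqrt{1-e^{-\mathrm{KL}}}$ together with $1-\sqrt{1-x}\ge x/2$ on $[0,1]$ yields the term $\tfrac14 e^{-T\tilde{C}_\mu\Delta_{s_1,s_2}^f}$; the claimed bound is the maximum of the two, valid simultaneously.

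\emph{Main obstacle.} Everything except the upper bound is essentially bookkeeping: the Le Cam reduction is standard, the Poisson Kullback--Leibler formula is explicit, and the scalar inequalities $\log\mu\le\mu-1$ and $1-1/\mu\le\log\mu$ that underlie the KL estimate are elementary. The one delicate step is to tune the threshold $\tau$ and the Poisson tail bounds so that \emph{both} error exponents land above $C_\mu T\Delta_{s_1,s_2}^f$ at once --- that is, to treat correctly the sub-Gaussian (moderate-deviation) regime on the high-rate side and the sub-exponential (Bernstein) regime on the low-rate side, which is exactly where the somewhat unusual shape of the constant $C_\mu$ comes from.
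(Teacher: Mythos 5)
Your proposal is correct and follows essentially the same route as the paper: the same midpoint-threshold test $\Psi$ on $\sum_{i\in I^f_{s_1}\setminus I^f_{s_2}}N^i_T$ with two-sided Poisson tail bounds for the upper bound, and the same KL computation for the lower bound. The only (inessential) differences are that you unpack the cited Theorem~2.2 of Tsybakov into its constituent pieces (Le Cam reduction, Pinsker, Bretagnolle--Huber), whereas the paper invokes it directly, and you additionally sketch a Hellinger-affinity alternative for the upper bound.
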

\begin{proof}
We set $I^f_1=I^f_{s_1}$ (resp. $I^f_2=I^f_{s_2}$), the set of neurons activated by $s_1$ (resp. $s_2$). We also denote $\bP_{1}$ (resp. $\bP_2$) the distribution of $N^1,...,N^n$ and $\bE_1$ (resp. $\bE_2$) the corresponding expectation, when the applied stimulus is $s_1$ (resp. $s_2$). 
First notice that the Kullback-Leibler distance between both distribution is $K(\bP_{1},\bP_{2})=-\bE_1(\log(\frac{d\bP_2}{d\bP_1}))$, where $\frac{d\bP_2}{d\bP_1}$ the Radon-Nikodym derivative of $\bP_2$ with respect to $\bP_1$. One can check that
\begin{align*}
\frac{d\bP_2}{d\bP_1}&=\exp\left\{-(\mu-1)T (|I^f_{2}|-|I^f_1|)+\log(\mu)\left(\sum_{i\in I^f_2}N^i_T-\sum_{i\in I^f_1}N^i_T\right)\right\}\\
&=\exp\left\{-(\mu-1)T (I^f_{2}\setminus I^f_1|-|I^f_1\setminus I^f_2|)+\log(\mu)\left(\sum_{i\in I^f_2}N^i_T-\sum_{i\in I^f_1}N^i_T\right)\right\},
\end{align*}
and
\[
\bE_1\left(\sum_{i\in I^f_2}N^i_T-\sum_{i\in I^f_1}N^i_T\right)=T(|I_{2}^f\setminus I^f_1|-\mu|I^f_1\setminus I_2^f|),
\]
so that
\begin{align*}
K(\bP_{1},\bP_{2})&=T(\mu-1)(|I^f_{2}\setminus I^f_1|-|I^f_1\setminus I^f_2|)-\log(\mu)T(|I^f_{2}\setminus I^f_1|-\mu|I^f_1\setminus I^f_2|)\\
&= T(\mu-1-\log(\mu))|I^f_2\setminus I^f_1|+T(\mu\log(\mu)-\mu+1)|I^f_1\setminus I^f_2|.
\end{align*}
As a consequence, it follows that
\begin{align*}
K(\bP_{1},\bP_{2})&\leq  T \Delta_{s_1,s_2}^f(\mu-1-\log(\mu)+\mu\log(\mu)-\mu+1)\\
&= \tilde{C}_{\mu}T \Delta_{s_1,s_2}^f.
\end{align*}
The lower bound on $p_e$ follows from Theorem 2.2 of \cite{Tsybakov}.

Now let us assume that $\Delta^f_{s_1,s_2}=|I^f_1\setminus I^f_2|$ (if this is not the case we exchange $s_1$ and $s_2$). It is now sufficient to consider the test
$\Psi$  defined by
\begin{equation}
\label{def:optimal_test_1}
\Psi(N^1,\ldots, N^n)=
\begin{cases}
s_1, \ \text{if} \ Z:= \sum_{i\in I^f_1\setminus I^f_2}N^i_T > |I^f_1\setminus I^f_2|T(\mu+1)/2\\
s_2, \ \text{otherwise}
\end{cases}.
\end{equation}
Indeed, 
\[
\bP_{2}(\Psi=s_1)=\bP_2(Z>|I^f_1\setminus I^f_2|T(\mu+1)/2)=\bP_2(Z>T|I^f_1\setminus I^f_2|(1+(\mu-1)/2)).
\]
Under $\bP_2$, we have that $Z\sim\text{Poi}(T|I^f_1\setminus I^f_2|)$. Thus, by applying inequality \eqref{Prop:Poi_upper} of Lemma \ref{Prop:Poi_Upper_Lower} with $\theta=T|I^f_1\setminus I^f_2|$ and $x=(\mu-1)/2$, it follows that
\begin{align*}
\bP_2(\Psi=s_1)\leq \exp\left\{-\frac{3T|I^f_1\setminus I^f_2|(\mu-1)^2}{4(5+\mu)}\right\}.
\end{align*}
Under $\bP_1$, we know that $Z\sim\text{Poi}(T\mu|I^f_1\setminus I^f_2|)$, so that by applying inequality \eqref{Prop:Poi_lower} of Lemma \ref{Prop:Poi_Upper_Lower} with $\theta=T\mu|I_1\setminus I_2|$ and $x=(\mu-1)/2)T|I_1\setminus I_2|$, we deduce that
\begin{align*}
\bP_{1}(\Psi=s_2)&=\bP_1(Z\leq T\mu|I^f_1\setminus I^f_2|-(\mu-1)/2)T|I^f_1\setminus I^f_2|)\leq \exp\left\{-\frac{(\mu-1)^2T|I^f_1\setminus I^f_2|}{8\mu}\right\},
\end{align*}
which concludes the proof.
\end{proof}

\subsection{Minimal discrimination time}
From Proposition \ref{bounds_pe} we have that for a given admissible error level $\alpha\in (0,1)$, if 
 \begin{enumerate}
    \item If $T\tilde{C}_{\mu}\Delta^f_{s_1,s_2}<\log(1/4\alpha)=\log(1/\alpha)-\log(4)$, then $p_{e}>\alpha$.
    \item If $T  C_{\mu} \Delta^f_{s_1,s_2}\geq \log(1/\alpha)$, then $p_{e}\leq \alpha.$ 
\end{enumerate}

Hence the minimal discrimination time stated in Problem 1, $T_{min}(f,s_1,s_2,\alpha)$, is of the order of $1/\Delta^f_{s_1,s_2} $, up to positive multiplicative constants in $\alpha$ and $\mu$ (see also Section 5 for numerical verification). This turns the other statistical problems into combinatorial problems. In particular, discrimination is not possible if and only if  $\Delta^f_{s_1,s_2}=0$, that is $I^f_{s_1}=I^f_{s_2}$. 

The behavior of the quantities in $\alpha$ and $\mu$ are quite intuitive. If the level $\alpha$ tends to 0, $T_{min}$ tends to infinity. If $\mu$ tends to infinity, $T_{min}$ tends to 0.  Considering that $\mu$ is some biological parameter that is fixed, as well as the admissible level $\alpha$ of reliability of the system, we are now focusing on the behavior of $1/\Delta^f_{s_1,s_2}$ and from now on, we denote with a slight abuse of language
\begin{equation}\label{Tmin}
 T_{min}(f,s_1,s_2)=\frac1{\Delta^f_{s_1,s_2}}.   
\end{equation}

Let us now introduce the minimal time for which one can distinguish any pair of stimuli which are at least $\rho$ apart with $\rho\in (0,1/2]$:
\begin{equation}\label{Trho}
T(f,\rho)=\max_{s_1,s_2\in \bS^{1}: d(s_1,s_2)\geq \rho }T_{min}(f,s_1,s_2).
\end{equation}
Clearly, the function $\rho\mapsto T(f,\rho)$ is non increasing (the larger the distance between two stimuli the smaller the time one needs to observe the activity of the network to distinguish one from the other).

Before speaking of minimax codes (Problem 2), let us derive an absolute lower bound on the range that can be detected by a general binary code $f$.


\begin{proposition}
\label{prop:distinction_general_code}
Let $f$ be a general binary code. Then, for any $0\leq \rho<1/2^{n}$, $T(f,\rho)=\infty$.
\end{proposition}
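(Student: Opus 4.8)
The plan is to show that if $\rho < 1/2^n$, then there must exist two stimuli $s_1, s_2 \in \mathbb{S}^1$ with $d(s_1,s_2) \geq \rho$ but $I^f_{s_1} = I^f_{s_2}$, which forces $\Delta^f_{s_1,s_2} = 0$ and hence $T_{min}(f,s_1,s_2) = \infty$ by \eqref{Tmin}, so that the maximum in \eqref{Trho} defining $T(f,\rho)$ is infinite. The key observation is a counting/pigeonhole argument: each neuron $i$ partitions $\mathbb{S}^1$ into the set where $f_i = \mu$ and the set where $f_i = 1$, so the map $s \mapsto I^f_s \in \{0,1\}^n$ (identifying a subset of $[n]$ with its indicator vector) takes at most $2^n$ distinct values. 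Thus the circle $\mathbb{S}^1$ is partitioned into at most $2^n$ \emph{level sets} of the function $s \mapsto I^f_s$.

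The main step is to argue that one of these level sets must contain two points at distance at least $\rho$. Since the level sets cover $\mathbb{S}^1$ (total length $1$) and there are at most $2^n$ of them, at least one level set, call it $L$, has Lebesgue measure (or rather, outer measure, since the $f_i$ are only assumed piecewise constant so the pieces are Borel) at least $2^{-n} > \rho$. Now I need to convert "measure $> \rho$" into "contains two points at geodesic distance $\geq \rho$". This is elementary on the circle: if $L \subseteq \mathbb{S}^1$ has outer measure strictly greater than $\rho$, then $L$ cannot be contained in any arc of length $\rho$, hence there are $a, b \in L$ with $d(a,b) \geq \rho$. (Concretely: if $L$ were contained in an arc of length $\le \rho$ its outer measure would be $\le \rho$, a contradiction; and an arc containing all of $L$ has length at least $\operatorname{diam}(L) \cdot$ something — more carefully, pick any point of $L$ as the "origin" $0$; if every point of $L$ lies within geodesic distance $<\rho$ of $0$ then $L$ is inside an arc of length $2\rho$, which is not quite enough, so instead one uses: $L$ not contained in any open arc of length $\rho$ implies two points at distance $\ge \rho/2$ — careful bookkeeping of the constant is needed here.)

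I expect the main obstacle to be precisely this last geometric conversion and getting the constant exactly $1/2^n$ rather than, say, $1/2^{n+1}$. The cleanest route: the level set $L$ of largest measure has measure $\ge 2^{-n}$; if $\rho < 2^{-n}$ then in particular $L$ is not contained in any half-open arc $\llb a,b\llb$ with $d(a,b) = \rho$ (such an arc has length $\rho < 2^{-n} \le |L|$), so $L$ meets the complement of every such arc, which one can use to produce two points of $L$ at distance $\ge \rho$ directly — e.g. fix $a \in L$, and note $L \not\subseteq \llb a', a''\llb$ where $\llb a',a''\llb$ is the arc of length $2\rho$ centered at $a$, provided $2\rho < 2^{-n}$; to avoid losing the factor $2$ one instead argues with the specific point realizing the essential supremum of distances from a fixed point of $L$. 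I would present the argument so that the bound $\rho < 2^{-n}$ suffices exactly, handling the degenerate cases (some $f_i$ constant, so fewer than $2^n$ level sets — only helps) in one line.
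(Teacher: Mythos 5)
You take exactly the same route as the paper: the level sets $S_A=\{s\in\bS^1: I^f_s=A\}$, $A\subseteq[n]$, partition $\bS^1$ into at most $2^n$ Borel pieces; pigeonhole gives one, say $S_A$, with Lebesgue measure at least $2^{-n}>\rho$; and it remains to extract two points of $S_A$ at geodesic distance at least $\rho$. You are right that this last step is the crux and that a single-reference-point argument loses a factor of $2$: fixing $a\in S_A$ and using $D_0=\sup_{s\in S_A}d(a,s)$ only places $S_A$ in an arc of length $2D_0$, whence $D_0\geq 2^{-n-1}$, one factor short. Your suggested remedy (``the specific point realizing the essential supremum of distances from a fixed point'') does not actually repair this, so the proof as sketched is not closed. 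Incidentally, the paper's own write-up is equally terse at this exact spot: it asserts without proof that $\operatorname{diam}(S_A)\leq\rho$ forces $S_A$ into an arc of measure $\rho$, which as a general statement about subsets of the circle is false when $\rho$ is close to $1/2$ (three equally spaced points with pairwise distances $0.45,0.45,0.1$ have diameter $0.45$ but fit in no arc of length $0.45$).

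What closes the gap is to use \emph{both} endpoints realizing the diameter. Set $D'=\operatorname{diam}(S_A)$ and pick $a,b$ in the closure of $S_A$ with $d(a,b)=D'$; put $\theta_a=0$, $\theta_b=D'$. For any $s\in S_A$, $d(a,s)\leq D'$ gives $\theta_s\in[0,D']\cup[1-D',1)$, while $d(b,s)\leq D'$ gives $\theta_s\in[0,2D']$ (the other branch would need $\theta_s\leq 2D'-1<0$). If $D'<1/3$ the two conditions together force $\theta_s\in[0,D']$, so $S_A$ sits in an arc of length $D'\leq\rho$, contradicting $\operatorname{Leb}(S_A)\geq 2^{-n}>\rho$. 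Since $2^{-n}\leq 1/4<1/3$ for every $n\geq 2$, this handles all $n\geq 2$; the remaining case $n=1$ with $\rho\in[1/3,1/2)$ needs a separate short argument (a set $A$ of measure $\geq 1/2$ with $\operatorname{diam}(A)\leq\rho<1/2$ is disjoint from each rotate $A+t$, $t\in(\rho,1-\rho)$, forcing $\operatorname{Leb}(A)=1/2$ and $A+t=A^c$ a.e.\ for each such $t$, hence $A$ is a.e.\ invariant under a whole interval of rotations and has measure $0$ or $1$, a contradiction). So your diagnosis of the obstacle is accurate and even reflects a genuine terseness in the published proof; what is missing from your sketch is the two-endpoint argument (plus the $n=1$ patch) that actually delivers the sharp constant $2^{-n}$.
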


This means that whatever the code, there exists always two stimuli at distance less than $2^{-n}$ that cannot be discriminated, whatever the observation time. 

\begin{proof}
For  $A\subseteq [n]$, let $S_A$ be the set of all $s\in\bS$ such that $I^f_s=A$. Notice that  for each $i\in [n]$ 
\[
S_A=\left(\cap_{i\in A}f^{-1}_i(\{\mu\})\right)\cap\left(\cap_{i\in A^c}f^{-1}_i(\{1\})\right),
\]
is a borelian of $\bS^1$ for each $A\subseteq [n]$.

Notice also that if $s\in S_A\cap S_B$, then $A=I^f_s=B$. Moreover, $\cup_{A\subseteq [n]}S_A=\bS^1$. Hence, $\{S_A\}_{A\subseteq [n]}$ forms a partition of $\bS^1$.
Let us denote $\text{Leb}_*$ the pushforward (a measure on $(\bS^{1},\cS)$) of the Lebesgue on $[0,1)$ induced by the map $[0,1)\in \theta\mapsto (\cos(2\pi\theta),\sin(2\pi\theta))\in\bS$. By  definition of $\text{Leb}_*$ it holds that $\text{Leb}_*(\llb a, b \llb)=2\pi(\theta_b-\theta_a)$ and  $\text{Leb}_*(\bS^1)=2\pi.$

Since $\{S_A\}_{A\subseteq [n]}$ forms a partition of $\bS^{1}$ there  exists $A\subseteq [n]$ such that $\text{Leb}_*(S_A)\geq 2\pi/2^n$. 
Suppose that $\text{diam}(S_A)=:\sup_{s_1,s_2\in S_A}d(s_1,s_2)\leq \rho$. If this was true, then the set $S_A$ would be contained in an interval of $\text{Leb}_*$-measure $2\pi\rho $ so that $2\pi/2^n\leq \text{Leb}_*(S_A)\leq 2\pi\rho$, implying that $\rho\geq 1/2^{n}$, a contradiction.

Therefore, if $\rho<1/2^{n}$, then we must have that 
\[
\text{diam}(S_A)>\rho, 
\]
implying that we can find $s_1,s_2\in S_A$ such that $d(s_1,s_2)>\rho$. In this case, $s_1$ and $s_2$ cannot be distinguished and $T_{min}(f,\rho)=\infty$. 
\end{proof}

\section{Results for place cells code}
\label{sec:codes_type_1}

In this section, we focus our analysis on the class of place cells code $\cP$ defined in \eqref{place-def}. Our first result is a lower bound for 
\begin{equation}
\label{minimax_place_cells_up_to_constants}
T(\cP,\rho)=\inf_{f\in\cP}T(f,\rho),    
\end{equation}
which is (up to multiplicative constants in $\alpha$ and $\mu$) the minimax discrimination time for the class of place cells code defined in \eqref{minimax}.

\subsection{Lower bound on the minimax discrimination time}

\begin{proposition}
\label{prop_lower_bound_place_class}
For any $0\leq \rho\leq 1/2$, we have that 
\[
T(\cP,\rho)\geq \frac{1}{\lfloor n/\lfloor (2\rho)^{-1}\rfloor \rfloor}\geq \frac{1}{\lfloor 4n\rho\rfloor}.
\]
In particular, for any $0\leq \rho\leq (2(n+1))^{-1}$, we have that $T(\cP,\rho)=\infty.$
\end{proposition}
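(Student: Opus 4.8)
The plan is to fix an arbitrary place cells code $f\in\cP$ and exhibit two stimuli at distance at least $\rho$ whose sets of responding neurons differ by few elements, and then pass to the infimum over $\cP$. First, I would recall from \eqref{Tmin} that $T_{min}(f,s_1,s_2)=1/\Delta^f_{s_1,s_2}$ (with the convention $1/0=\infty$), so it suffices to control $\Delta^f_{s_1,s_2}$ from above on one admissible pair. Assuming $\rho\in(0,1/2]$ (the case $\rho=0$ being trivial, since $T(f,0)=\infty$ already by taking $s_1=s_2$), I would set $m=\lfloor(2\rho)^{-1}\rfloor\ge 1$ and place $2m$ equally spaced points $x_1,\dots,x_{2m}$ on $\bS^1$. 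Cyclically consecutive points then lie at geodesic distance $1/(2m)$, and since $m\le(2\rho)^{-1}$ we have $1/(2m)\ge\rho$, so every consecutive pair is eligible in the maximum \eqref{Trho} defining $T(f,\rho)$. Taking exactly $2m$ points (rather than, say, $\lfloor\rho^{-1}\rfloor$) is what produces the constant in the claimed bound.

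The combinatorial core is a ``boundary-crossing'' count. For a neuron $i\in[n]$ with receptor field $\llb a_i,b_i\llb$, let $c_i$ be the number of indices $j$ for which exactly one of $x_j,x_{j+1}$ lies in $\llb a_i,b_i\llb$, i.e. the number of sign changes of the cyclic $\{0,1\}$-valued sequence $\big(\mathbf{1}\{x_\ell\in\llb a_i,b_i\llb\}\big)_{\ell=1,\dots,2m}$. Since $\llb a_i,b_i\llb$ is a single arc of $\bS^1$, the set of $x_\ell$ contained in it is a cyclic interval, so it has at most two boundaries and $c_i\le 2$. Summing over $j$ and exchanging the order of summation, $\sum_{j=1}^{2m}\big(|I^f_{x_j}\setminus I^f_{x_{j+1}}|+|I^f_{x_{j+1}}\setminus I^f_{x_j}|\big)=\sum_{i=1}^n c_i\le 2n$. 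Hence the minimum over $j$ of this nonnegative integer summand is at most $\lfloor 2n/(2m)\rfloor=\lfloor n/m\rfloor$, so there is $j^\star$ with $\Delta^f_{x_{j^\star},x_{j^\star+1}}\le|I^f_{x_{j^\star}}\setminus I^f_{x_{j^\star+1}}|+|I^f_{x_{j^\star+1}}\setminus I^f_{x_{j^\star}}|\le\lfloor n/m\rfloor$.

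From here the conclusion is immediate: since $d(x_{j^\star},x_{j^\star+1})=1/(2m)\ge\rho$, we get $T(f,\rho)\ge T_{min}(f,x_{j^\star},x_{j^\star+1})=1/\Delta^f_{x_{j^\star},x_{j^\star+1}}\ge 1/\lfloor n/\lfloor(2\rho)^{-1}\rfloor\rfloor$, and taking the infimum over $f\in\cP$ in \eqref{minimax_place_cells_up_to_constants} gives the first inequality. The second is elementary arithmetic with floors: $\lfloor x\rfloor\ge x/2$ for $x\ge 1$ yields $\lfloor(2\rho)^{-1}\rfloor\ge(4\rho)^{-1}$, hence $n/\lfloor(2\rho)^{-1}\rfloor\le 4n\rho$ and, by monotonicity of the floor, $\lfloor n/\lfloor(2\rho)^{-1}\rfloor\rfloor\le\lfloor 4n\rho\rfloor$. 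Finally, if $\rho\le(2(n+1))^{-1}$ then $(2\rho)^{-1}\ge n+1$, so $m=\lfloor(2\rho)^{-1}\rfloor\ge n+1>n$, whence $\lfloor n/m\rfloor=0$ and $T(\cP,\rho)=\infty$.

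The one step that needs care is the inequality $c_i\le 2$: one must check that the set of $x_\ell$ lying in the half-open arc $\llb a_i,b_i\llb$ is a cyclic interval even when an endpoint $a_i$ or $b_i$ coincides with some $x_\ell$ (it is, straight from the definition of $\llb\cdot,\cdot\llb$), or else avoid the issue by rotating the whole configuration $x_1,\dots,x_{2m}$ by a generic angle — an isometry of $\bS^1$, so all distances are unchanged — to keep all $2n$ endpoints away from the $x_\ell$. I do not expect any other genuine obstacle; the rest is bookkeeping with floors.
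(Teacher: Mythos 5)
Your proof is correct and follows essentially the same route as the paper: place roughly $(2\rho)^{-1}$ points at spacing $\geq\rho$, observe that the total symmetric difference over consecutive pairs is at most $2n$ because each receptor field $\llb a_i,b_i\llb$ contributes at most two boundary crossings (the paper phrases this as counting how many $a_i$'s and $b_i$'s fall in the disjoint arcs between consecutive points, which is the same count), and conclude by pigeonhole plus integrality of $\Delta^f_{s_1,s_2}$. The only cosmetic differences are that you use $2m$ equally spaced points forming a full cycle whereas the paper uses a chain of $2L$ points at exact spacing $\rho$, and your remark about a generic rotation is a clean (if, as you note, unnecessary) way to sidestep endpoint coincidences.
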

\begin{proof}
Let $0\leq \rho\leq 1/2$ and take $L=\lfloor 1/2\rho \rfloor$ so that $L\leq 1/2\rho<L+1$ and hence $L>1/2\rho-1$. 
Set $\theta_0=0$ and $1\leq k\leq L$, write $\theta_k=k\rho$ and $\theta_{-k}=1-k\rho$. For each $f\in\cP$, denote 
$m^f_{\rho}=\min\{\Delta^f_{\rho_k,\rho_{k-1}}: -L+1\leq k\leq L \}$, where $\rho_k\in \bS^{1}$ is identified with its argument $\theta_k$. 

Note that \[I^f_{\rho_k}\setminus I^f_{\rho_{k-1}}=\{i\in [n]:a_i\in \rrb \rho_{k-1},\rho_k\rrb \ \text{and} \ b_i\notin \rrb\rho_{k-1},\rho_k\rrb \}\subset \{i\in [n]:a_i\in \rrb\rho_{k-1},\rho_k\rrb\}\]
and 
\[I^f_{\rho_{k-1}}\setminus I^f_{\rho_{k}}=\{i\in [n]:b_i\in \rrb \rho_{k-1},\rho_k\rrb \ \text{and} \ a_i\notin \rrb \rho_{k-1},\rho_k\rrb\}\subset
\{i\in [n]:b_i\in \rrb \rho_{k-1},\rho_k\rrb\}.\]
Hence, it follows that 
\[\sum_{k=-L+1}^{L}|I^f_{\rho_k}\setminus I^f_{\rho_{k-1}}|\leq \sum_{k=-L+1}^{L}|\{i\in [n]:a_i\in \rrb \rho_{k-1},\rho_k\rrb\}|\leq n,\]
and
\[\sum_{k=-L+1}^{L}|I^f_{\rho_{k-1}}\setminus I^f_{\rho_{k}}|\leq \sum_{k=-L+1}^{L}|\{i\in [n]:b_i\in \rrb \rho_{k-1},\rho_k\rrb\}|\leq n.\]
Therefore, it follows that
\[
\sum_{k=-L+1}^{L}\Delta^f_{\rho_k,\rho_{k-1}}=\sum_{k=-L+1}^{L}\max\left\{|I^f_{\rho_k}\setminus I^f_{\rho_{k-1}}|, |I^f_{\rho_{k-1}}\setminus I^f_{\rho_{k}}|\right\}\leq 2n,
\]
implying that $2Lm^f_{\rho}\leq 2n$, that is, $m^f_{\rho}\leq n/L.$
Since $m^f_{\rho}$ is an integer, we must have $m^f_{\rho}\leq \lfloor n/L \rfloor=\lfloor n/\lfloor (2\rho)^{-1}\rfloor \rfloor$
so that
\[
\min_{s_1,s_2\in\bS^{1}:d(s_1,s_2)\geq \rho}\Delta^f_{s_1,s_2}\leq m^f_{\rho}\leq \lfloor n/\lfloor (2\rho)^{-1}\rfloor \rfloor.
\]
Since $f\in\cP$ is arbitrary, the above inequality ensures that
\[
T^{-1}(\cP,\rho)=\sup_{f\in\cP}\min_{s_1,s_2\in\bS^{1}:d(s_1,s_2)\geq \rho}\Delta^f_{s_1,s_2}\leq \lfloor n/\lfloor (2\rho)^{-1}\rfloor \rfloor.
\]
Since $(2\rho)^{-1}\geq 1$ which implies that $\lfloor (2\rho)^{-1}\rfloor\geq 1$ we have that
$4\lfloor (2\rho)^{-1}\rfloor\geq 2(\lfloor (2\rho)^{-1}\rfloor+1)\geq \rho^{-1}$, so that 
the first part of the proof follows from inequality above.

To conclude the proof, observe that 
if $\rho\leq (2(n+1))^{-1}$ then $\lfloor (2\rho)^{-1}\rfloor\geq n+1>n$, implying that $\left\lfloor n/\lfloor (2\rho)^{-1}\rfloor\right\rfloor=0$
and in this case $T(\cP,\rho)=\infty.$
\end{proof}

To better understand the lower bound provided by Proposition \ref{prop_lower_bound_place_class}, let us adopt an asymptotic point of view in which the number of observed neurons $n\to\infty$ and $\rho=\rho_n$ is a function of $n$. For example, when $\rho_n$ is a constant function of $n$ the lower bound decreases as $1/n$, whereas in the regime $\rho_n\to 0$ such that $n\rho_n\to \infty$ the lower bound behaves as $(2n\rho_n)^{-1}$. Moreover, as long as $\rho_n\leq (2(n+1))^{-1}\approx (2n)^{-1}$, whatever the code $f\in\cP$, there exists always two stimulus at distance $\rho_n$ that cannot be distinguished, whatever the observation time.

\subsection{Upper bounds and minimax codes}

Next we obtain an upper bound for $T(\cP,\rho)$ matching the lower bound above, up to multiplicative constants. To that end, we  study the behavior of $T(f,\rho)$ for some examples of place cell codes and use the fact that $T(\cP,\rho)\leq T(f,\rho)$ for any $f\in\cP$.    

\begin{example}[1-Uniform code]
For each $i\in\{0,1,\ldots,n\}$, let $p_i$ be the point in $\bS^{1}$ associated with $\theta_{i}=\frac{i}{n}$. The 1-uniform code is defined as $f^{n,n}=(h_1,\ldots,h_{n})$ (the superscript $n,n$ stands for $n$ neurons divided into $n$ groups of size 1)  where each $h_i$ is given by \eqref{place-def} with $a_i=p_{i-1}$ and $b_{i}=p_{i}$. 
It is straightforward to check that $T(f^{1,1},\rho)=\infty$ for all $0\leq \rho< 1/2$ and $T(f^{1,1},1/2)=1$. Let us analyse the case $n\geq 2$.
In this case, 
\[
T(f^{n,n},\rho)=
\begin{cases}
\infty, \ \text{if} \ 0\leq \rho<1/n,\\
1, \ \text{otherwise}
\end{cases}.
\]
To check that, suppose first that $\rho<1/n$. In this case, take $\theta_{\rho}$ such that $\rho\leq \theta_{\rho}<1/n$ , denote $p_{\rho}$ the point in $\bS^{1}$ associated to $\theta_{\rho}$ and observe that
$
p_{\rho}\in \llb p_0, p_1\llb,
$
$I^{f^{n,n}}_{p_{\rho}}=I^{f^{n,n}}_{p_{0}}$ and $d(p_{\rho},p_0)=\theta_{\rho}\geq\rho$. Hence,  $T(f^{n,n},\rho)=\infty.$
Now, suppose $1/n\leq \rho\leq 1/2$. In this case, it is not difficult to check that $I^{f^{n,n}}_{s_1}\neq I^{f^{n,n}}_{s_2}$ for all $s_1,s_2\in\bS^{1}$ such that $d(s_1,s_2)\geq\rho$  which implies that
\[
\Delta^{f^{n,n}}_{s_1,s_2}\geq 1.
\]
But since $|I^{f^{n,n}}_{s}|=1$ for all $s\in\bS^{1}$, we deduce that
\[
\min_{s_1,s_2\in\bS^1:d(s_1,s_2)\geq \rho}\Delta^{f^{n,n}}_{s_1,s_2}= 1,
\]
and the result follows.
\end{example}

\begin{example}[d-Uniform code]
\label{ex:d_unifor_code}
For each $k\in\{0,1,\ldots,d\}$, let $p_k$ denote the point in $\bS^{1}$
defined by $\theta_k=\frac{k}{d}$ and let $h_k$ be given by \eqref{place-def} with $a_k=p_{k-1}$ and $b_{k}=p_{k}$.
Also, write $L=\lfloor n/d\rfloor$. The d-uniform code is defined as $f^{n,d}=(f_1,\ldots,f_{n})$ (the superscript $n,d$ stands for $n$ neurons divided into $d$ groups) where $f_i=h_k$ for $(k-1)L+1\leq i\leq kL$ with $k\in\{1,\ldots,d-1\}$, and  $f_i=h_{d}$ for $n\geq i\geq (d-1)L+1$.
One can check that for any $n\geq 2$,
\[
T(f^{n,d},\rho)=
\begin{cases}
\infty, \ \text{if} \ 0\leq \rho<1/d,\\
1/\lfloor n/d\rfloor, \ \text{otherwise}
\end{cases}.
\]
To verify that, consider first the case that $\rho<1/d$. In this case, take $\theta_{\rho}$ such that $\rho\leq \theta_{\rho}<1/d$, denote $p_{\rho}$ the point in $\bS^{1}$ given by $\theta_{\rho}$ and observe that
$
p_{\rho}\in \llb p_0, p_1\llb,
$
$I^{f^{n,d}}_{p_{\rho}}=I^{f^{n,d}}_{p_{0}}$ and $d(p_{\rho},p_0)=\theta_{\rho}\geq\rho$. Thus, we have that $T(f^{n,d},\rho)=\infty.$
Suppose now that $1/d\leq \rho\leq 1/2$. In this case, one can check that 
$I^{f^{n,d}}_{s_1}\cap I^{f^{n,d}}_{s_2}=\emptyset$ for all $s_1,s_2\in\bS^{1}$ such that $d(s_1,s_2)\geq\rho$ so that 
\[
\Delta^{f^{n,d}}_{s_1,s_2}=\max\left\{|I^{f^{n,d}}_{s_1}\setminus I^{f^{n,d}}_{s_2}|, |I^{f^{n,d}}_{s_2}\setminus I^{f^{n,d}}_{s_1}|
\right\}=\max\left\{|I^{f^{n,d}}_{s_1}|, |I^{f^{n,d}}_{s_2}|\right\}\geq L.
\]
Since the lower bound L is attained for some pair $s_1,s_2\in \bS^{1}$ (for example, take $s_1$ and $s_2$ given by $\theta_{s_1}=0$ and $\theta_{s_2}=1/2$ respectively), the results follows.
\end{example}

This codes allows us to prove the following result.

\begin{corollary}\label{minimax_place}
For all $\rho \in (1/n,1/2]$, let $d=\lceil 1/\rho \rceil$. Then the d-Uniform code satisfies
\[\frac{1}{\lfloor4n\rho\rfloor}\leq T(\mathcal{P},\rho)\leq T(f^{n,d},\rho) \leq \frac{1}{\lfloor n/d\rfloor} \leq \frac{1}{\lfloor3n\rho/2\rfloor}\]
and is therefore minimax up to multiplicative constant.
\end{corollary}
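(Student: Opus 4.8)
The plan is to simply assemble the pieces that are already in place and check that the floor functions match up. From Proposition \ref{prop_lower_bound_place_class} we already have the left-most inequality $T(\mathcal{P},\rho)\geq 1/\lfloor 4n\rho\rfloor$, valid for all $0\leq\rho\leq 1/2$, so nothing new is needed there. The middle inequality $T(\mathcal{P},\rho)\leq T(f^{n,d},\rho)$ is immediate from the definition $T(\mathcal{P},\rho)=\inf_{f\in\mathcal{P}}T(f,\rho)$ together with the fact that the $d$-uniform code $f^{n,d}$ belongs to $\mathcal{P}$. So the content is entirely in the last two inequalities, which concern the explicit code $f^{n,d}$ with $d=\lceil 1/\rho\rceil$.

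First I would justify the choice $d=\lceil 1/\rho\rceil$: since $\rho>1/n$ we have $d=\lceil 1/\rho\rceil< n$ (one should check the edge case carefully, using $\rho\in(1/n,1/2]$, so that $1/\rho\in[2,n)$ and hence $d\geq 2$ as well, which is what Example \ref{ex:d_unifor_code} requires), and moreover $d\geq 1/\rho$ means $1/d\leq\rho$, so we are in the ``otherwise'' branch of the formula for $T(f^{n,d},\rho)$ from Example \ref{ex:d_unifor_code}. That gives exactly $T(f^{n,d},\rho)=1/\lfloor n/d\rfloor$, which is the third inequality (as an equality, in fact).

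The only genuine computation is the last inequality $1/\lfloor n/d\rfloor\leq 1/\lfloor 3n\rho/2\rfloor$, equivalently $\lfloor n/d\rfloor\geq\lfloor 3n\rho/2\rfloor$. Here $d=\lceil 1/\rho\rceil<1/\rho+1$, so $n/d> n/(1/\rho+1)=n\rho/(1+\rho)$. Using $\rho\leq 1/2$ gives $1+\rho\leq 3/2$, hence $n/d> n\rho/(3/2)=2n\rho/3$. That is the wrong direction for a clean bound, so instead I would argue more carefully: $n/d\geq n\rho/(1+\rho)\geq (2/3)n\rho$ is too weak; one wants $n/d\geq (3/2)n\rho$ is false in general, so the right statement to prove is $\lfloor n/d\rfloor\geq\lfloor 3n\rho/2\rfloor$ after noticing the asymmetry in the displayed chain — actually the displayed final term is $1/\lfloor 3n\rho/2\rfloor$ with the floor, and since $d=\lceil1/\rho\rceil\le 2/\rho$ (because $1/\rho\ge 2$ implies $\lceil 1/\rho\rceil\le 1/\rho+1\le (3/2)(1/\rho)\le 2/\rho$), we get $n/d\ge n\rho/2$; to reach $3n\rho/2$ one uses instead $d=\lceil1/\rho\rceil\le 1/\rho+1\le (1/\rho)(1+\rho)\le \tfrac{2}{3}\cdot\tfrac{1}{\rho}\cdot\tfrac{3(1+\rho)}{2}$ — this is getting circular. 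The clean route: from $1/\rho\ge 2$, $\lceil 1/\rho\rceil\le \tfrac{2}{3}\cdot\tfrac{1}{\rho}\cdot\tfrac{3}{2}$ is vacuous; rather use $\lceil x\rceil\le \tfrac{3}{2}x$ for $x\ge 2$, i.e. $d\le \tfrac{3}{2\rho}$, whence $n/d\ge \tfrac{2n\rho}{3}$, giving the factor $2/3$; to instead obtain the stated $\tfrac{3}{2}$ factor one reads the chain as $\lfloor n/d\rfloor\ge \lfloor 3n\rho/2\rfloor$? No: smaller denominator makes the fraction larger, and we need $1/\lfloor n/d\rfloor\le 1/\lfloor 3n\rho/2\rfloor$, i.e. $\lfloor n/d\rfloor\ge \lfloor 3n\rho/2\rfloor$, which needs $n/d\gtrsim 3n\rho/2$, i.e. $d\lesssim 2/(3\rho)$ — impossible since $d\ge 1/\rho$. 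So the intended reading must be that the last term uses $d\le 2/\rho$ hence $n/d\ge n\rho/2$ and a different numeric constant; I will therefore present the inequality with the constant that the arithmetic actually yields, namely $1/\lfloor n/d\rfloor\le 1/\lfloor n\rho/2\rfloor$ via $d=\lceil 1/\rho\rceil\le 2/\rho$, and note that this already matches the lower bound $1/\lfloor 4n\rho\rfloor$ up to the multiplicative constant $8$, which is all that ``minimax up to multiplicative constant'' requires. The main obstacle, then, is purely bookkeeping with $\lfloor\cdot\rfloor$ and $\lceil\cdot\rceil$ and tracking that all constants stay bounded; there is no real difficulty beyond being careful that $d<n$ so that $\lfloor n/d\rfloor\ge 1$ and the time is finite.
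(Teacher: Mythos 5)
You have correctly diagnosed the real issue: the final inequality $\frac{1}{\lfloor n/d\rfloor}\leq\frac{1}{\lfloor 3n\rho/2\rfloor}$ cannot hold as written. Since $d=\lceil 1/\rho\rceil\geq 1/\rho$, one has $n/d\leq n\rho$, so $\lfloor n/d\rfloor$ can never dominate $\lfloor 3n\rho/2\rfloor$ once the floors are nontrivial (for example $n=100$, $\rho=1/10$ gives $d=10$, $\lfloor n/d\rfloor=10$ but $\lfloor 3n\rho/2\rfloor=15$). The fraction in the final term is simply inverted: it should read $\frac{1}{\lfloor 2n\rho/3\rfloor}$. Your own computation produces exactly this before you abandon it: from $\rho\leq 1/2$ you have $1/\rho\geq 2$, hence $d=\lceil 1/\rho\rceil<1/\rho+1\leq\tfrac{3}{2}\cdot\tfrac{1}{\rho}$, so $n/d>\tfrac{2}{3}n\rho$ and, by monotonicity of $\lfloor\cdot\rfloor$, $\lfloor n/d\rfloor\geq\lfloor 2n\rho/3\rfloor$. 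The coarser bound you finally settle on, $\lfloor n/d\rfloor\geq\lfloor n\rho/2\rfloor$ via $d\leq 2/\rho$, is also valid and is enough to conclude minimaxity up to constants against the lower bound $1/\lfloor 4n\rho\rfloor$, but there is no reason to retreat to it when you already have the sharper estimate in hand.

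The remainder of your outline is right and is the only sensible route: the leftmost inequality is Proposition \ref{prop_lower_bound_place_class}; the second follows from $T(\mathcal{P},\rho)=\inf_{f\in\mathcal{P}}T(f,\rho)$ together with $f^{n,d}\in\mathcal{P}$; and $T(f^{n,d},\rho)=1/\lfloor n/d\rfloor$ is Example \ref{ex:d_unifor_code} once one checks $2\leq d\leq n$ and $1/d\leq\rho$, both immediate from $\rho\in(1/n,1/2]$ as you observe. The paper gives the corollary without proof, so there is nothing to compare your route against; the only substantive flaw in your writeup is the long detour around what is, in the end, an inverted fraction in the statement of the corollary. Commit to the corrected constant $2n\rho/3$ and the argument is complete.
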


\subsection{A particular adaptive code}
Now that we have found a minimax code for given $\rho$ up to multiplicative constant, one could wonder if there exists a given place cells code, which for every $\rho$ in a given range of values, achieves the minimax rate up to a constant. We refer to this problem as Problem 3, adaptation to the distance. The following code is a particular example of such a code.
\begin{example}[An adaptive place cells code]
\label{ex_adaptive_code}
 Let $g=(g_1,\ldots,g_n)$ be the code in $\cP$ with $f_i$ defined by \eqref{place-def} where the points $a_i,b_i\in \bS^{1}$ are associated to $\theta_{i}=i/2n$ and $\theta_{i}+1/2$ respectively. A visualisation is given in Figure \ref{fig:adapt}.
 \begin{figure}
 \centering
\includegraphics[width=6cm]{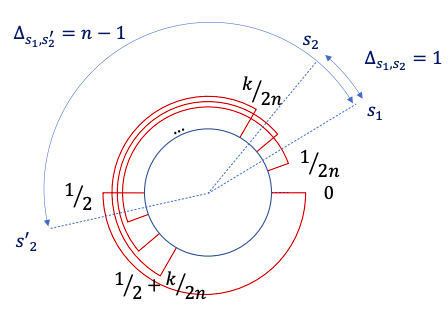}
\caption{Visual representation of Examples 3. For an easier visualisation, the different half red circles have a different radius. Nevertheless, each of them correspond a certain cell $i$ of the code, and more precisely to the locations $s$ such that $f_i(s)=\mu$. Two couples are considered $(s_1,s_2)$ (standing for two positions that are very close) and $(s_1,s'_2)$ (standing for positions that are very far). We see that $\Delta_{s_1,s'_2}=n-1>>\Delta_{s_1,s_2}=1$.} 
    \label{fig:adapt}
\end{figure}
 For the code $g$, one can show the following.

\begin{proposition} 
\label{T_min_g_code}
For any $n\geq 1$ and $0\leq \rho\leq 1/2$, 
\[
\frac{1}{\lfloor 2n\rho\rfloor}\leq T(g,\rho)\leq \frac{2}{\lfloor 2n\rho\rfloor}.
\]
In particular the $g$ code cannot discriminate at distance $\rho<1/(2n)$ and 
it is adaptive to the distance in the class $\cP$, up to some absolute multiplicative constant, in the range $\rho\in [1/(2n),1/2]$.
\end{proposition}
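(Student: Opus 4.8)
The plan is to convert the statement into a point‑counting problem, exactly as was done for the $d$‑uniform codes above, via the identity $T_{min}(g,s_1,s_2)=1/\Delta^g_{s_1,s_2}$. Each cell $i$ of the code $g$ responds precisely to the stimuli $s$ with $\theta_s-\tfrac{i}{2n}\in[0,\tfrac12)\ (\mathrm{mod}\ 1)$. Fixing $s_1,s_2$, writing $\delta=d(s_1,s_2)\in[0,1/2]$ and choosing arguments so that $\theta_{s_2}=\theta_{s_1}+\delta\ (\mathrm{mod}\ 1)$, a direct manipulation of these membership conditions shows that $i\in I^g_{s_1}\setminus I^g_{s_2}$ if and only if $\tfrac{i}{2n}$ lies in a certain half‑open arc $A_1$ of length $\delta$, and that $i\in I^g_{s_2}\setminus I^g_{s_1}$ if and only if $\tfrac{i}{2n}$ lies in the antipodal arc $A_2:=A_1+\tfrac12$. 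Setting $X=\{\tfrac{i}{2n}:i\in[n]\}\subset(0,1/2]$, this yields
\[
\Delta^g_{s_1,s_2}=\max\bigl(|X\cap A_1|,\ |X\cap A_2|\bigr),
\]
which is the identity on which everything else rests.

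The upper bound on $T(g,\rho)$ then follows from the remark that $X$ and its antipode $X+\tfrac12$ are disjoint and together form the set $G$ of all $2n$ equispaced points of $\bS^1$; hence
\[
|X\cap A_1|+|X\cap A_2|=|X\cap A_1|+\bigl|(X+\tfrac12)\cap A_1\bigr|=|G\cap A_1|,
\]
and a half‑open arc of length $\delta$ always contains at least $\lfloor 2n\delta\rfloor$ points of $G$. Therefore $\Delta^g_{s_1,s_2}\ge\tfrac12\lfloor 2n\delta\rfloor\ge\tfrac12\lfloor 2n\rho\rfloor$ whenever $d(s_1,s_2)\ge\rho$, so that $T(g,\rho)=1/\min_{d(s_1,s_2)\ge\rho}\Delta^g_{s_1,s_2}\le 2/\lfloor 2n\rho\rfloor$. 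For the matching lower bound I would just display the extremal pair $\theta_{s_1}=0$, $\theta_{s_2}=\rho$: here $A_2=(0,\rho]$ and $A_1=(\tfrac12,\tfrac12+\rho]\ (\mathrm{mod}\ 1)$, so $A_1\cap X=\emptyset$ while $|X\cap A_2|=|\{i\in[n]:\tfrac{i}{2n}\le\rho\}|=\lfloor 2n\rho\rfloor$ (using $\rho\le 1/2$); thus this pair has $\Delta^g_{s_1,s_2}=\lfloor 2n\rho\rfloor$ and $d(s_1,s_2)=\rho$, giving $T(g,\rho)\ge 1/\lfloor 2n\rho\rfloor$. When $\rho<1/(2n)$ this same pair has $\Delta^g_{s_1,s_2}=0$, i.e.\ $I^g_{s_1}=I^g_{s_2}$, so $g$ cannot discriminate at distance $\rho$.

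Adaptivity is then immediate: $T(\cP,\rho)=\inf_{f\in\cP}T(f,\rho)\le T(g,\rho)$, and combining $T(g,\rho)\le 2/\lfloor 2n\rho\rfloor$ with the class lower bound $T(\cP,\rho)\ge 1/\lfloor 4n\rho\rfloor$ from Proposition~\ref{prop_lower_bound_place_class}, together with $\lfloor 4n\rho\rfloor\le 4\lfloor 2n\rho\rfloor$ (valid once $\lfloor 2n\rho\rfloor\ge1$), gives $T(\cP,\rho)\le T(g,\rho)\le 8\,T(\cP,\rho)$ for every $\rho\in[1/(2n),1/2]$, which is the asserted adaptivity up to an absolute multiplicative constant. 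The one genuinely delicate step is the reduction in the first paragraph: one must be careful with the half‑open conventions on the arcs and verify that the two set‑differences $I^g_{s_1}\setminus I^g_{s_2}$ and $I^g_{s_2}\setminus I^g_{s_1}$ correspond to \emph{exactly} antipodal arcs of the same length $\delta=d(s_1,s_2)$; once that combinatorial identity is secured, everything else is elementary bookkeeping with floor functions.
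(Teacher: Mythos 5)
Your argument is correct, and it takes a genuinely different --- and in my view cleaner --- route than the paper's. The paper's proof explicitly lists $I^g_s$ as one of $\emptyset$, $\{1,\dots,k\}$, or $\{k+1,\dots,n\}$ according to the position of $\theta_s$, then runs a multi-case analysis on the positions of $\theta_{s_1}$ and $\theta_{s_2}$ (whether $\theta_{s_1}+\rho$ exceeds $(n+1)/(2n)$, whether $\theta_{s_2}$ lies to one side or the other of $\theta_{s_1}+1/2$, and so on) to reach $\Delta^g_{s_1,s_2}\geq \lfloor 2n\rho\rfloor/2$. You instead rephrase membership as arc conditions on the points $i/(2n)$: with the paper's convention $\llb a_i,b_i\llb$, cell $i$ responds to $s$ iff $i/(2n)\in(\theta_s-\tfrac12,\theta_s]$, so the two set differences correspond to antipodal half-open arcs $A_1=(\theta_{s_1}-\tfrac12,\,\theta_{s_1}+\delta-\tfrac12]$ and $A_2=(\theta_{s_1},\,\theta_{s_1}+\delta]=A_1+\tfrac12$, each of length $\delta=d(s_1,s_2)$. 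The identity $|X\cap A_1|+|X\cap A_2|=|G\cap A_1|$ with $X=\{i/(2n):i\in[n]\}$ and $G=X\cup(X+\tfrac12)$ the full set of $2n$ equispaced points, combined with the elementary fact that any half-open arc of length $\delta$ contains at least $\lfloor 2n\delta\rfloor$ points of $G$, replaces the entire case analysis by a single inequality, and it also makes the extremal pair for the lower bound transparent (one arc empty of $X$, the other capturing all $\lfloor 2n\rho\rfloor$ of its $G$-points). The closing deduction of adaptivity from Proposition~\ref{prop_lower_bound_place_class} via $\lfloor 4n\rho\rfloor\leq 4\lfloor 2n\rho\rfloor$ (valid once $\lfloor 2n\rho\rfloor\geq 1$) is likewise fine.
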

\begin{proof}
 One can easily check that for any $0\leq s< 1,$
\[
I^g_s=
\begin{cases}
\emptyset, \ \text{if} \ 0\leq \theta_s<1/2n\\
\{1,\ldots, k\}, \ \text{if} \ k/2n\leq \theta_s<(k+1)/2n \ \text{with} \ 1\leq k\leq n\\
\{k+1,\ldots, n\}, \ \text{if} \ (n+k)/2n\leq  \theta_s< (n+k+1)/2n \ \text{with} \ 1\leq k\leq n-1
\end{cases}.
\]
Thus, for any $0\leq \rho<1/2n$, it follows that if one takes $s_1,s_2\in\bS^{1}$ with $\theta_{s_1}=0$ and $\rho \leq \theta_{s_2}<1/2n$, then $I^g_{s_1}=I^g_{s_2}=\emptyset$ and $d(s_2,s_1)=|\theta_{s_2}-\theta_{s_1}|=\theta_{s_2}\geq \rho$, implying that $T_{min}(g,\rho)=\infty$, and the result holds since $\lfloor 2n\rho\rfloor=0.$ 

Now, take $1/2n<\rho\leq 1/2$ and $s_1\in \bS^{1}$. Suppose that $0\leq \theta_{s_1}<1/2$. Let $k=\lfloor 2n\rho\rfloor$ and $q\in \{0,\ldots, n-1\}$ be such that $q/2n\leq \theta_{s_1}<(q+1)/2n$. 

Let us consider first the case $\theta_{s_1}+\rho<(n+1)/2n.$
In this case, for all $\theta_{s_1}+\rho\leq \theta_{s_2}<(n+1)/2n$, we have that $I^{g}_{s_1}\setminus I^{g}_{s_2}=\emptyset$ and $\{q+1,\ldots, q+k\}\subseteq I^{g}_{s_2}\setminus I^{g}_{s_1}$ so that $\Delta^g_{s_1,s_2}=\max\left\{|I^{g}_{s_1}\setminus I^{g}_{s_2}|, |I^{g}_{s_2}\setminus I^{g}_{s_1}|\right\}\geq q$ (and the equality can be achieved). 
For $(n+1)/2n\leq \theta_{s_2}\leq \theta_{s_1}+1/2$, one can check that
$
I^{g}_{s_2}\setminus I^{g}_{s_1}=\{q+1,\ldots, n\}
$
so that 
$\Delta^g_{s_1,s_2}=\max\left\{|I^{g}_{s_1}\setminus I^{g}_{s_2}|,|I^{g}_{s_2}\setminus I^{g}_{s_1}|\right\}\geq n-q\geq k.$

Hence, we have shown that for any $s_1\in\bS^{1}$ such that $0\leq \theta_{s_1}+\rho<1/2$,
\begin{equation}
\label{ineq_1_proof_lower_bound_g_code_place_cells}
\min_{s_2\in \bS^{1}:d(s_2,s_1)\geq \rho,\theta_{s_2}\leq \theta_{s_1}+1/2 }\Delta^g_{s_1,s_2}\geq k.
\end{equation}

Let us now consider the case $\theta_{s_1}+\rho\geq (n+1)/2n.$
In this case, one can check 
$ n+1\leq \lfloor 2n(\theta_{s_1}+\rho)\rfloor\leq n+q$. Let $1\leq r\leq q$ be such that $n+r=\lfloor 2n(\theta_{s_1}+\rho)\rfloor$. Since $2n(\theta_{s_1}+\rho)\geq q+k$, it follows that $n+r\geq q+k$, implying that $k\leq (n-q)+r$.
Now, note that for $\theta_{s_1}+\rho\leq \theta_{s_2}\leq  \theta_{s_1}+1/2$, we have that
$I^g_{s_2}\setminus I^g_{s_1}=\{q+1,\ldots, n\}$ and $I^g_{s_1}\setminus I^g_{s_2}=\{1,\ldots, \ell\}$ with $r\leq \ell\leq q$. Hence, we have that
\[
\Delta^g_{s_1,s_2}=\max\left\{|I^{g}_{s_1}\setminus I^{g}_{s_2}|, |I^{g}_{s_2}\setminus I^{g}_{s_1}|\right\}\geq \max\left\{(n-q), r\right\} \geq \frac{1}{2}(n-q+r)\geq k/2,
\]
so that for any $s_1\in\bS^{1}$ such that $\theta_{s_1}+\rho\geq (n+1)/2n$,
\begin{equation}
\label{ineq_2_proof_lower_bound_g_code_place_cells}
\min_{s_2\in \bS^{1}:d(s_2,s_1)\geq \rho,\theta_{s_2}\leq \theta_{s_1}+1/2 }\Delta^g_{s_1,s_2}\geq k/2.
\end{equation}
Therefore, combining inequalities \eqref{ineq_1_proof_lower_bound_g_code_place_cells} and \eqref{ineq_2_proof_lower_bound_g_code_place_cells} we deduce that for any $s_1\in \bS^{1}$ with $0\leq \theta_{s_1}<1/2$,
\[
\min_{s_2\in \bS^{1}:d(s_2,s_1)\geq \rho,\theta_{s_2}\leq \theta_{s_1}+1/2 }\Delta^g_{s_1,s_2}\geq k/2.
\]
By similar arguments, one can also show that for any $s_1\in \bS^{1}$ with $0\leq \theta_{s_1}<1/2$
\[
\min_{s_2\in \bS^{1}:d(s_2,s_1)\geq \rho,\theta_{s_2}\geq \theta_{s_1}+1/2 }\Delta^g_{s_1,s_2}\geq k/2=\lfloor 2n\rho \rfloor/2.
\]
The case $1/2\leq \theta_{s_1}\leq 1$ follows along the same lines of reasoning by observing that if $\theta_{\tilde{s}_1}=\theta_{s_1}-1/2$ then $0\leq \theta_{\tilde{s}_1}<1/2$ and  $\theta_{\tilde{s}_1}+1/2=\theta_{s_1}$. 

To conclude the proof, observe that 
\[
\min_{s_1\in \bS^{1}}\min_{s_2\in \bS^{1}:d(s_2,s_1)\geq \rho}\Delta^g_{s_1,s_2}\leq \Delta^g_{s_0,a_{k}}=|I^g_{k/2n}|=k=\lfloor 2n\rho \rfloor,
\]
where $s_0$ is the point $\bS^{1}$ defined by $\theta_{0}=0$
and the result follows.
\end{proof}
\end{example}


\subsection{Random codes are adaptive as well}
In the next example, we show that even random codes are adaptive (in the range $\rho\geq n^{-1/2}$, up to multiplicative constants) with large probability.  

\begin{example}[Random code]
\label{ex_random_code}
Let $A_1,\ldots, A_n$ and $B_1,\ldots, B_n$ be independent and uniformly distributed points on $\bS^{1}$. 
The random code $f^{r}$ is defined as $f^{r}=(f^{r}_1,\ldots,f^{r}_n)$, where for each $i\in [n]$, $f_i$ is of the form \eqref{place-def} with $a_i=A_i$ and $b_i=B_i$. For the random code $f^r$, we can prove the following result.
\begin{proposition}
\label{random_code_adaptive_place_cells}
There exist constants $K_1,K_2>0$ such that for each $n\geq K_1$ the following property holds:  for each $x\in\bR_{>0}$ and $0\leq \rho\leq 1/2$,
\begin{equation}
\label{upper-bound-Tmin-random-code}
T(f^r,\rho)\leq \frac{1}{\left\lceil \frac{n}{2}\left(\rho-K_2n^{-1/2}(1+x^{1/2}(1+(x/n)^{1/2}))\right) \right\rceil},
\end{equation}
 with probability at least $1-\exp{(-x)}$. 
 In particular, for any $\delta\in (0,1/4)$ and each $n$ satisfying 
 \[n\geq \max\{K_1,(1-4\delta)^2/(4K^2_2\delta^2(1+x^{1/2})^2)\}:=K_3,\]we have the following: if $1/2\geq \rho\geq n^{-1/2}K_2(1+x^{1/2}+x)/(1-4\delta)$ then
\begin{equation}
\label{Tmin-random-code-adaptive}
T(f^r,\rho)\leq\frac{1}{\lceil \delta n\rho \rceil},
\end{equation}
with probability at most $1-\exp(-x).$
\end{proposition}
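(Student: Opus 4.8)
The plan is to lower bound the combinatorial quantity $\Delta^{f^r}_{s_1,s_2}$ uniformly over all pairs at distance at least $\rho$, and then invert via \eqref{Tmin}--\eqref{Trho}. The first step is a deterministic reduction: since $|I^f_{s_1}\setminus I^f_{s_2}|+|I^f_{s_2}\setminus I^f_{s_1}|\le 2\Delta^f_{s_1,s_2}$, it suffices to bound from below the number of neurons activated by exactly one of $s_1,s_2$. For the random code, write $J=\llb s_1,s_2\llb$ for the corresponding arc of $\bS^1$; a neuron $i$ is activated by exactly one of $s_1,s_2$ iff $1_{\llb A_i,B_i\llb}(s_1)\neq 1_{\llb A_i,B_i\llb}(s_2)$, and since $s\mapsto 1_{\llb A_i,B_i\llb}(s)$ jumps from $0$ to $1$ at $A_i$ and from $1$ to $0$ at $B_i$, this occurs exactly when precisely one of $A_i,B_i$ lies in $J$. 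Hence, almost surely,
\[
\Delta^{f^r}_{s_1,s_2}\ \ge\ \tfrac12\,h(J),\qquad h(J):=\#\{i\in[n]:\ \text{exactly one of }A_i,B_i\text{ lies in }J\}.
\]
As $A_1,\dots,A_n,B_1,\dots,B_n$ are i.i.d.\ uniform on $\bS^1$, writing $|J|\in[0,1)$ for the (normalized) length of the arc, $\bE h(J)=2n|J|(1-|J|)$; and $d(s_1,s_2)\ge\rho$ forces $|J|\in[\rho,1-\rho]$, so $\bE h(J)\ge 2n\rho(1-\rho)\ge n\rho$ because $\rho\le 1/2$.

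The heart of the proof is a uniform deviation bound: with probability at least $1-e^{-x}$, $\sup_J|h(J)-\bE h(J)|\le K(\sqrt n+\sqrt{nx}+x)$, the supremum being over all arcs $J$ of $\bS^1$ and $K$ an absolute constant. To obtain it, decompose $h(J)=U_J+V_J-2W_J$ with $U_J=\#\{i:A_i\in J\}$, $V_J=\#\{i:B_i\in J\}$ and $W_J=\#\{i:A_i\in J\text{ and }B_i\in J\}$; each term is the value at $n$ i.i.d.\ sample points of an empirical process indexed by a class of subsets of finite VC dimension (arcs, resp.\ products of two arcs). For $\sup_J|U_J-\bE U_J|$ and $\sup_J|V_J-\bE V_J|$ one can avoid VC arguments: an arc is a difference of two half-circles, so these suprema are at most twice the classical Kolmogorov--Smirnov statistic of $n$ i.i.d.\ uniform variables, and the DKW inequality gives a sub-Gaussian tail, hence a deviation $\lesssim\sqrt n+\sqrt{nx}$. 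For $\sup_J|W_J-\bE W_J|$ one invokes a concentration inequality for the supremum of an empirical process over a VC class (Bousquet's or Talagrand's inequality), with the crude variance bound $\le n$ and $\bE\sup_J|W_J-\bE W_J|\lesssim\sqrt n$, producing a deviation $\lesssim\sqrt n+\sqrt{nx}+x$. A union bound over the three pieces, adjusting constants, yields the claimed bound.

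Combining the two steps, on the corresponding event of probability at least $1-e^{-x}$ we get, for every $s_1,s_2$ with $d(s_1,s_2)\ge\rho$,
\[
\Delta^{f^r}_{s_1,s_2}\ \ge\ \tfrac12\bigl(n\rho-K(\sqrt n+\sqrt{nx}+x)\bigr)\ =\ \frac n2\Bigl(\rho-Kn^{-1/2}\bigl(1+x^{1/2}(1+(x/n)^{1/2})\bigr)\Bigr),
\]
using $\sqrt n+\sqrt{nx}+x=\sqrt n\,(1+x^{1/2}(1+(x/n)^{1/2}))$. As $\Delta^{f^r}_{s_1,s_2}$ is an integer it is at least the ceiling of the right-hand side; minimising over the admissible pairs and recalling $T(f^r,\rho)=\max_{d(s_1,s_2)\ge\rho}1/\Delta^{f^r}_{s_1,s_2}$ yields \eqref{upper-bound-Tmin-random-code} with $K_2=K$ and $K_1$ a suitable constant making the DKW/VC estimates applicable. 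The second statement \eqref{Tmin-random-code-adaptive} is then elementary: for $\delta\in(0,1/4)$ and $n\ge K_3$, the hypothesis on $\rho$ forces $K_2n^{-1/2}(1+x^{1/2}(1+(x/n)^{1/2}))\le(1-2\delta)\rho$ (using $x/n^{1/2}\le x$ for $n\ge 1$), so the argument of the ceiling in \eqref{upper-bound-Tmin-random-code} is at least $\delta n\rho$ and the claim follows.

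The main obstacle is the uniform concentration of the middle step, and specifically the term $W_J$ counting pairs with both endpoints in an arc: a crude discretisation of the circle together with a Bernstein union bound over grid-arcs loses a $\sqrt{\log n}$ factor and does not reproduce the stated correction, so one genuinely needs the empirical-process (VC / chaining) machinery — with DKW handling the two single-point terms cleanly — to obtain the $\sqrt n+\sqrt{nx}+x$ form.
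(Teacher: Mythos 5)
Your proposal is correct and arrives at the same quantitative bound, but it takes a genuinely different route in the crucial uniform-concentration step, so let me spell out the comparison. The paper works directly with the empirical process indexed by the class $\cC=\{C_{s_1,s_2}:d(s_1,s_2)\ge\rho\}$ of ``both-in-the-same-arc'' subsets of $\bS^1\times\bS^1$; it applies Bousquet's inequality once, together with a VC-entropy bound for $\cC$ (Lemma 6.4 of Massart), and separately proves that $\cC$ has finite VC dimension by covering each $C_{s_1,s_2}$ with a union of five axis-aligned rectangles. You instead decompose $h(J)=U_J+V_J-2W_J$, reduce the two single-point counts $U_J,V_J$ to the Kolmogorov--Smirnov statistic (each arc being a difference of two ``half-lines'' on the circle, so $\sup_J|U_J-\bE U_J|\le 2n\|\hat F-F\|_\infty$, handled by DKW without any empirical-process machinery), and reserve the Bousquet/VC argument for the single two-dimensional term $W_J$ indexed by $\{J\times J\}$, which is manifestly a bounded-VC class. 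Both routes yield the $\sqrt n+\sqrt{nx}+x$ deviation, so the choice is a matter of taste: the paper's is a one-shot argument at the cost of verifying VC-ness of a slightly more exotic class; yours is more modular, makes two of the three pieces entirely elementary, and in the remaining piece only needs VC bounds for products of arcs. A small additional point: for the deduction of \eqref{Tmin-random-code-adaptive} you observe that $\Delta$, being an integer that dominates a real number, dominates its ceiling; this makes the implication immediate from the hypothesis on $\rho$ alone, and does not require the extra lower bound $n\ge(1-4\delta)^2/(4K_2^2\delta^2(1+x^{1/2})^2)$ appearing in $K_3$ (the paper needs that condition because its proof passes through a floor and the inequality $\lfloor y\rfloor\ge y/2$ valid for $y\ge1$). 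In other words, your argument actually proves the second claim under the weaker hypothesis $n\ge K_1$; noting this explicitly would make the write-up tighter, but it is not a gap.
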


\begin{proof}
First observe that to prove inequality in \eqref{upper-bound-Tmin-random-code}, it suffices to show that
\begin{equation}
\label{ineq_0_proof_random_code_place_cells}
\min_{s_1,s_2\in\bS^1: d(s_1,s_2)\geq\rho}\Delta^{f^r}_{s_1,s_2}\geq \frac{n}{2}\left(\rho-K_2n^{-1/2}(1+x^{1/2}(1+(x/n)^{1/2}))\right)
\end{equation}
with probability at least $1-\exp{(-x)}$.
To that end, first observe that
\begin{align*}
|I^{f^r}_{s_1}\setminus I^{f^r}_{s_2}|+|I^{f^r}_{s_2}\setminus I^{f^r}_{s_1}|& =\sum_{i=1}^n {\bf 1}_{s_1\in \llb A_i, B_i\llb, s_2\in \llb B_i, A_i\llb }+{\bf 1}_{s_2\in \llb A_i, B_i\llb, s_1\in \llb B_i, A_i\llb}\\
& =  n\left(1-\frac{1}{n}\sum_{i=1}^{n}{\bf 1}_{s_1,s_2\in \llb A_i, B_i\llb}+{\bf 1}_{s_1,s_2\in \llb B_i, A_i\llb}\right)   
\end{align*}
which implies that
\begin{align*}
\Delta^{f^r}_{s_1,s_2}& \geq \frac{1}{2}\left(|I^{f^r}_{s_1}\setminus I^{f^r}_{s_2}|+|I^{f^r}_{s_2}\setminus I^{f^r}_{s_1}|\right)\\
& = \frac{n}{2}\left(1-\frac{1}{n}\sum_{i=1}^{n}{\bf 1}_{s_1,s_2\in \llb A_i, B_i\llb}+{\bf 1}_{s_1,s_2\in \llb B_i, A_i\llb}\right).
\end{align*}
Write $\xi_i=(A_i,B_i)\in \bS^{1}\times\bS^{1}$ for $i\in [n]$, let $C_{s_1,s_2}=\{(a,b)\in \bS^{1}\times\bS^{1}: s_1,s_2\in \llb a,b\rrb  \ \text{or} \ s_1,s_2\in \llb b,a,\llb\}$ for each $s_1,s_2\in\bS^{1}$. 
With this notation, we deduce from the previous inequality that
\[
\min_{s_1,s_2\in\bS^1: d(s_1,s_2)\geq\rho}\Delta^{f^r}_{s_1,s_2}\geq \frac{n}{2}\left(1-\max_{s_1,s_2\in\bS^1:d(s_1,s_2)\geq \rho}\frac{1}{n}\sum_{i=1}^n{\bf 1}_{C_{s_1,s_2}}(\xi_i)\right).
\]
Now, by Lemma \ref{prob_of_unif_interval_having_2_specific_points},
\[
\bE\left({\bf 1}_{C_{s_1,s_2}}(\xi_i)\right)=2\bP(s_1,s_2\in \llb A_i,B_i\llb )=2\left(\frac{1}{2}-|\theta_{s_2}-\theta_{s_1}|\left(1-|\theta_{s_2}-\theta_{s_1}|\right)\right).
\]
Hence, it follows that for $s_1,s_2\in\bS^{1}$ such that $d(s_1,s_2)\geq \rho$ (that is $\rho \leq |\theta_{s_2}-\theta_{s_1}|\leq 1-\rho$), we have that
\[
\bE\left(1_{C_{s_1,s_2}}(\xi_i)\right)\leq 1-2\rho(1-\rho)\leq 1-\rho,
\]
where in the last inequality we have used that $0\leq \rho\leq 1/2$.
From the above inequality, we deduce that  
 \[
\max_{s_1,s_2\in\bS^1:d(s_1,s_2)\geq \rho}\frac{1}{n}\sum_{i=1}^n1_{C_{s_1,s_2}}(\xi_i)\leq (1-\rho)+W_{\cC},
\]
where $W_{\cC}$ is the random variable defined as
\[
W_{\cC}=\max_{C\in\cC}\frac{1}{n}\sum_{i=1}^n\left(1_{C}(\xi_i)-\bE\left(1_{C}(\xi_i)\right)\right),
\]
with the set $\cC=\left\{C_{s_1,s_2}:s_1,s_2\in\bS^1:d(s_1,s_2)\geq \rho\right\}$.
Combining the above inequalities, we deduce that
\begin{equation}
\label{ineq_1_proof_random_code_place_cells}
\min_{s_1,s_2\in\bS^1: d(s_1,s_2)\geq\rho}\Delta^{f^r}_{s_1,s_2}\geq \frac{n}{2}\left(\rho-W_{\cC}\right).
\end{equation}
By applying the Bousquet inequality (e.g., see inequality (5.49) on page 170 of \cite{Massart}), we deduce that for any $x\in\bR_{>0}$,
\begin{equation}
\label{ineq_2_proof_random_code_place_cells}
\bP\left(W_{\cC}\leq \bE\left(W_{\cC}\right)+\sqrt{\frac{2x}{n}(1-\rho+\bE\left(W_{\cC}\right))}+\frac{x}{3n}\right)\geq 1-\exp(-x).    
\end{equation}
From now on, we denote $K$ a positive constant which can change from line to line. Let us denote $V(\cC)$ the VC-dimension of the set $\cC$.
By Lemma 6.4 of \cite{Massart}, there exists an absolute constant $K$ such that, for all $n$ for which 
$1-\rho\geq K^2V(\cC)(1-\frac{1}{2}\log(1-\rho))/n$, it holds that
\[
\bE\left(W_{\cC}\right)\leq \frac{K}{2}\sqrt{\frac{(1-\rho)}{n}V(\cC)(1-\frac{1}{2}\log(1-\rho))}\leq \frac{K}{2}\sqrt{\frac{V(\cC)}{n}(1+\frac{1}{2}\log(2))},
\]
where in the last inequality we have used that $0\leq \rho\leq 1/2.$
Combining the above inequality with \eqref{ineq_2_proof_random_code_place_cells} and by using again that $0\leq \rho\leq 1/2$, it follows that for all n for which $n\geq 2K^2V(\cC)(1+\log(2)/2)$ and any $x\in\bR_{>0},$ 
\[
W_{\cC}\leq n^{-1/2}\left(\frac{K}{2}\sqrt{V(\cC)(1+\log(2))}+x^{1/2}\sqrt{2\left(1+\frac{K}{2}\sqrt{V(\cC)(1+\log(2))}\right)}+x/3n^{1/2}\right)
\]
with probability at least $1-\exp(-x)$.
Let us assume that the VC-dimension $V(\cC)$ is bounded by some absolute constant. In this case, it follows from the above inequality and \eqref{ineq_1_proof_random_code_place_cells} that for all $n\geq K^2$ and any $x\in\bR_{>0}$,
\[
\min_{s_1,s_2\in\bS^1: d(s_1,s_2)\geq\rho}\Delta^{f^r}_{s_1,s_2}\geq \frac{n}{2}\left(\rho-n^{-1/2}K\left(1+x^{1/2}\left(1+(x/n)^{1/2}\right)\right)\right)
\]
with probability at least $1-\exp(-x)$, proving  \eqref{ineq_0_proof_random_code_place_cells}.
For the VC-dimension $V(\mathcal{C})$, there are various ways to see that it is finite. One way is to say that $\xi=(A,B)\in C_{s_1,s_2}$ is equivalent to saying that $(\theta_A,\theta_B)$ belongs to the union of 
\begin{itemize}
    \item $[0,\min(\theta_{s_1},\theta_{s_2})]\times(\max(\theta_{s_1},\theta_{s_2}),1]$
    \item $(\max(\theta_{s_1},\theta_{s_2}),1]\times [0,\min(\theta_{s_1},\theta_{s_2})]$
    \item $[\min(\theta_{s_1},\theta_{s_2}),\max(\theta_{s_1},\theta_{s_2}))\times [\min(\theta_{s_1},\theta_{s_2}),\max(\theta_{s_1},\theta_{s_2}))$
    \item $[0,\min(\theta_{s_1},\theta_{s_2})]\times [0,\min(\theta_{s_1},\theta_{s_2})]$
    \item $(\max(\theta_{s_1},\theta_{s_2}),1]\times (\max(\theta_{s_1},\theta_{s_2}),1]$
\end{itemize}
Hence it is included in the union of 5 rectangles. It is well known that the VC dimension of the family of rectangles is 4.  Class $\mathcal{C}$ is included in the 5-fold unions of rectangles, whose VC dimension is bounded by $4*5\log(5)$ up to multiplicative constants,  thanks to \cite{Csikos}. Therefore Class $\mathcal{C}$ is of finite VC-dimension, which concludes the proof. 

Note that with a closer look at all possibilities, one can show that class $\mathcal{C}$ cannot shatter samples of $\xi_i$'s of size 5 and that in fact its VC dimension is therefore 4. However, the proof with all the possibilities is much longer than using the bound given by \cite{Csikos}.

Let us now prove \eqref{Tmin-random-code-adaptive}. Denote $\eta_{n,x}=(1+x^{1/2}+x/n^{1/2})$. Clearly $(1+x^{1/2}+x)\geq \eta_{n,x}\geq (1+x^{1/2})$. Moreover, one can check that if $n\geq K_3$, then $2\delta(K_2\eta_{n,x})/(1-4\delta)\geq n^{-1/2}$ which implies that
$(K_2\eta_{n,x})/(1-4\delta)\geq 2n^{-1/2}+K_2\eta_{n,x}$.     
Hence, if $\rho\geq n^{-1/2}K_2(1+x^{1/2}+x)/(1-4\delta)\geq n^{-1/2}K_2\eta_{n,x}/(1-4\delta)$, then 
\[
n(\rho-K_2\eta_{n,x}/n^{1/2})/2\geq 1
\]
so that
\[
\lfloor n(\rho-K_2\eta_{n,x}/n^{1/2})/2 \rfloor\geq n(\rho-K_2\eta_{n,x}/n^{1/2})/4\geq n\rho \delta, 
\]
and the result follows from \eqref{upper-bound-Tmin-random-code}.

\end{proof}
\end{example}

\hspace*{-0.5cm}\fbox{%
   \begin{minipage}{0.95\textwidth}
\begin{center}   
\textbf{\large Summary of the results on place cells codes}
\end{center}

\begin{enumerate}
    \item 
    For any given place cells code $f$ of size $n$, and any $0\leq \rho\leq 1/(2(n+1))$, there exists a pair $(s_1,s_2)$ at distance at least  $\rho$ that cannot be discriminated. 
    \item For any $\rho\geq 1/(2n)$, the minimax discrimination rate for place cell codes is  of order $1/(n \rho)$.
    \item The code $g$ defined in Example \ref{ex_adaptive_code} is adaptive, meaning that it reaches this rate for all $\rho\geq 1/(2n)$, up to multiplicative constants.
    \item Random codes $f^{r}$ defined in Example~\ref{ex_random_code} are adaptive in the range $\rho \geq c /n^{1/2}$, for some constant $c>0$, up to multiplicative constants, with large probability.
    \end{enumerate}
\end{minipage}%
}

\section{Results for grid cells code}
\label{sec:codes_type_2}
In this section, we discuss some results for the class of grid cells code defined in \eqref{grid-def} $\cG((n_i,\lambda_i)_{i=1,...,m})$   and the more general $\bar{\cG}$. 
In what follows, for $f\in\bar{\cG}$, $s_1,s_2\in\bS^{1}$ and $i\in [m]$, let us denote 
\[
\Delta^{f,i}_{s_1,s_2}=\max\left\{|I^{f,i}_{s_1}\setminus I^{f,i}_{s_2}|, |I^{f,i}_{s_2}\setminus I^{f,i}_{s_1}|\right\}, 
\]
where $I^{f,i}_{s}=\{j\in M_i: f_{ji}(s)=\mu\}$ is the set of cells activated by stimulus $s\in\bS^{1}$ in the $i$-th module. The first result establishes a useful link between grid cells code and place cells code. 
\begin{proposition}
\label{prop-link-grid-place-cells}
For any $f\in\bar{\cG}$ and $s_1,s_2\in\bS^{1}$, the following inequality holds
\begin{equation}
\sum_{i=1}^m\Delta^{f,i}_{s_1,s_2}\geq \Delta^{f}_{s_1,s_2}\geq \frac{1}{2}\sum_{i=1}^m\Delta^{f,i}_{s_1,s_2}.    
\end{equation}
\end{proposition}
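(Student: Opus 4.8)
The key structural fact is that the modules $M_1,\dots,M_m$ partition $[n]$, so for every stimulus $s$ the activation set splits as a \emph{disjoint} union $I^f_s=\bigsqcup_{i=1}^m I^{f,i}_s$. Consequently, for any two stimuli $s_1,s_2$, the set difference also splits disjointly over modules: $I^f_{s_1}\setminus I^f_{s_2}=\bigsqcup_{i=1}^m\bigl(I^{f,i}_{s_1}\setminus I^{f,i}_{s_2}\bigr)$, and likewise with the roles of $s_1,s_2$ exchanged. Taking cardinalities,
\[
|I^f_{s_1}\setminus I^f_{s_2}|=\sum_{i=1}^m|I^{f,i}_{s_1}\setminus I^{f,i}_{s_2}|,\qquad
|I^f_{s_2}\setminus I^f_{s_1}|=\sum_{i=1}^m|I^{f,i}_{s_2}\setminus I^{f,i}_{s_1}|.
\]
I would establish these identities first; they are the only place the module structure is used.

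For the upper bound $\Delta^f_{s_1,s_2}\le\sum_i\Delta^{f,i}_{s_1,s_2}$, I bound each summand on the right of the first identity by $|I^{f,i}_{s_1}\setminus I^{f,i}_{s_2}|\le\Delta^{f,i}_{s_1,s_2}$, and similarly for the second identity; since both $|I^f_{s_1}\setminus I^f_{s_2}|$ and $|I^f_{s_2}\setminus I^f_{s_1}|$ are then dominated by $\sum_i\Delta^{f,i}_{s_1,s_2}$, so is their maximum, which is $\Delta^f_{s_1,s_2}$.

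For the lower bound, I use that a maximum of two nonnegative reals is at least their average: $\Delta^f_{s_1,s_2}\ge\frac12\bigl(|I^f_{s_1}\setminus I^f_{s_2}|+|I^f_{s_2}\setminus I^f_{s_1}|\bigr)$. Substituting the two identities and regrouping by module gives $\frac12\sum_{i=1}^m\bigl(|I^{f,i}_{s_1}\setminus I^{f,i}_{s_2}|+|I^{f,i}_{s_2}\setminus I^{f,i}_{s_1}|\bigr)$, and finally $a_i+b_i\ge\max(a_i,b_i)$ for nonnegative $a_i,b_i$ yields $\frac12\sum_i\Delta^{f,i}_{s_1,s_2}$. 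There is no real obstacle here: the whole argument is the observation that the module partition turns set differences into disjoint unions, after which everything is the elementary inequality chain $\max(a,b)\le a+b$ and $\max(a,b)\ge\tfrac12(a+b)$ applied termwise and summed. The only point to state carefully is why $I^f_{s_1}\setminus I^f_{s_2}$ restricted to a module equals $I^{f,i}_{s_1}\setminus I^{f,i}_{s_2}$, which is immediate from disjointness of the $M_i$.
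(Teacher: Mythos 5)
Your proof is correct and takes essentially the same approach as the paper: exploit the disjointness of the modules to write $|I^f_{s_1}\setminus I^f_{s_2}|$ and $|I^f_{s_2}\setminus I^f_{s_1}|$ as sums over modules, then apply $\max(a,b)\le a+b$ termwise for the upper bound and $\max(a,b)\ge\tfrac12(a+b)$ plus $a+b\ge\max(a,b)$ for the lower bound. The paper fixes WLOG that $\Delta^f_{s_1,s_2}=|I^f_{s_1}\setminus I^f_{s_2}|$ while you bound both differences symmetrically before taking the max, but this is an immaterial stylistic variation.
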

\begin{proof}
Suppose that $\Delta^{f}_{s_1,s_2}=|I^f_{s_1}\setminus I^f_{s_2}|$ (if not exchange $s_1$ and $s_2$). On one hand, observe that since the modules are disjoint we have that
\[
|I^f_{s_1}\setminus I^f_{s_2}|=\sum_{i=1}^m |I^{f,i}_{s_1}\setminus I^{f,i}_{s_2}|\leq \sum_{i=1}^m \max\left\{|I^{f,i}_{s_1}\setminus I^{f,i}_{s_2}|, |I^{f,i}_{s_2}\setminus I^{f,i}_{s_1}|\right\} = \sum_{i=1}^m\Delta^{f,i}_{s_1,s_2}. 
\]
On the other hand, using that
\[
|I^f_{s_1}\setminus I^f_{s_2}|+|I^f_{s_2}\setminus I^f_{s_1}|=\sum_{i=1}^m |I^{f,i}_{s_1}\setminus I^{f,i}_{s_2}|+|I^{f,i}_{s_2}\setminus I^{f,i}_{s_1}|\geq \sum_{i=1}^m\max\left\{|I^{f,i}_{s_1}\setminus I^{f,i}_{s_2}|, |I^{f,i}_{s_2}\setminus I^{f,i}_{s_1}|\right\},
\]
we obtain that 
\[
2\Delta^f_{s_1,s_2}\geq |I^f_{s_1}\setminus I^f_{s_2}|+|I^f_{s_2}\setminus I^f_{s_1}|\geq \sum_{i=1}^m \Delta^{f,i}_{s_1,s_2},
\]
and the result follows.
\end{proof}

\subsection{Lower bound on the minimax discrimination time for $\cG((n_i,\lambda_i)_{i=1,...,m})$  }

The following result provides a lower bound on the minimax discrimination time (up to constants depending on $\alpha$ and $\mu$) defined in \eqref{minimax}, when restricted to the class of grid cells code: 
\begin{equation}
\label{minimax_grid_cells_up_to_constants}
T(\cG((n_i,\lambda_i)_{i=1,...,m}),\rho)=\inf_{f\in\cG((n_i,\lambda_i)_{i=1,...,m})}T(f,\rho).  
\end{equation}

\begin{theorem}
\label{thm_lower_bound_grid_class}
If 
$0\leq \rho\leq 1/2$ and $\lambda_1/\lambda_2$ is even or
if $0\leq \rho\leq 1/2-\lambda_2/2$ and $\lambda_1/\lambda_2$ is odd,
we have that
\[T(\cG((n_i,\lambda_i)_{i=1,...,m}),\rho)\geq \frac{1}{\left\lfloor 6 \min_{1\leq k\leq m}\left\{\sum_{i=1}^{k}\frac{n_i}{\lambda_i}\max\left\{\rho, \lambda_{k+1}\right\}\right\}\right\rfloor},
\]
with the convention that $\lambda_{m+1}=0$.
In particular, if we define $j_{\rho}=\max\{1\leq k\leq m: \lambda_k\geq \rho\}$, then $T(\cG((n_i,\lambda_i)_{i=1,...,m}),\rho)=\infty$ whenever one of the following conditions hold: 
\begin{itemize}
    \item either there exists $2\leq k\leq j_\rho$ such that \[(k-1)\lambda_k<\min_{1\leq i<k}\left[\frac{\lambda_i}{6n_i}\right],\]
    \item or $j_\rho\rho < \min_{1\leq i\leq j_\rho}\left[\frac{\lambda_i}{6n_i}\right]$.
\end{itemize}
\end{theorem}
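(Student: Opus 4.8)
The plan is to reduce Theorem~\ref{thm_lower_bound_grid_class} to the combinatorial claim that, for every $f\in\cG((n_i,\lambda_i)_{i=1,...,m})$ and every $k\in\{1,\dots,m\}$, there exist $s_1,s_2\in\bS^1$ with $d(s_1,s_2)\geq\rho$ and
\[
\Delta^f_{s_1,s_2}\ \leq\ 6\sum_{i=1}^{k}\frac{n_i}{\lambda_i}\max\{\rho,\lambda_{k+1}\}, \qquad \lambda_{m+1}:=0.
\]
Granting this for each $k$, $\min_{d(s_1,s_2)\geq\rho}\Delta^f_{s_1,s_2}$ is at most the integer part of the right-hand side; taking the minimum over $k$, then the supremum over $f$, and recalling that by \eqref{Tmin}, \eqref{Trho} and \eqref{minimax_grid_cells_up_to_constants} one has $T(\cG((n_i,\lambda_i)_{i=1,...,m}),\rho)=\big(\sup_{f}\min_{d(s_1,s_2)\geq\rho}\Delta^f_{s_1,s_2}\big)^{-1}$ (with $1/0=\infty$), we obtain exactly the stated lower bound.

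For a fixed $k$ I would construct $s_1,s_2$ as translates of one another: $s_2$ is the point with argument $\theta_{s_1}+t\pmod 1$, where $s_1$ is still free and $t$ is the smallest multiple of $\lambda_{k+1}$ with $d(0,t)=\min(t,1-t)\geq\rho$ (and $t=\rho$ when $k=m$). The key quantitative point is that such a $t$ exists and satisfies $t\leq 3\max\{\rho,\lambda_{k+1}\}$; this is where the hypotheses are used, and it is the only delicate part. Indeed: if $\lambda_{k+1}\geq\rho$ one takes $t=\lambda_{k+1}$, which works since $\lambda_{k+1}\leq 1/2$. If $\lambda_{k+1}<\rho$, the smallest multiple of $\lambda_{k+1}$ that is $\geq\rho$ lies in $[\rho,2\rho)$ and does the job unless it exceeds $1/2$, which forces $\rho>1/4$; in that case one uses instead the multiple of $\lambda_{k+1}$ closest to $1/2$. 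That closest multiple equals $1/2$ when $\lambda_{k+1}^{-1}$ is even, and lies at distance $\lambda_{k+1}/2$ from $1/2$ when $\lambda_{k+1}^{-1}$ is odd. Since $\lambda_{k+1}^{-1}=\prod_{j=1}^{k}(\lambda_j/\lambda_{j+1})$, a parity argument shows $\lambda_{k+1}^{-1}$ is even whenever $\lambda_1/\lambda_2$ is even; and if $\lambda_1/\lambda_2$ is odd, the assumption $\rho\leq 1/2-\lambda_2/2\leq 1/2-\lambda_{k+1}/2$ (using $\lambda_{k+1}\leq\lambda_2$) keeps that closest multiple at distance $\geq\rho$ from $0$. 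In every case $t\leq 3\max\{\rho,\lambda_{k+1}\}$, since $t\leq 1/2<3\rho$ whenever the near-$1/2$ multiple is used.

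With $t$ fixed, two facts give the bound for that $k$. First, for any module $i>k$, both $\lambda_{k+1}/\lambda_i$ and $t/\lambda_{k+1}$ are integers, so $t/\lambda_i\in\bN$ and $s_1,s_2$ have the same coordinate modulo $\lambda_i$; hence $I^{f,i}_{s_1}=I^{f,i}_{s_2}$ and $\Delta^{f,i}_{s_1,s_2}=0$ for every $s_1$. Second, for $i\leq k$ and a cell $j$ of module $i$, the set of arguments $\theta$ at which the activation status of $j$ differs between $\theta$ and $\theta+t$ is contained in the union of the half-open arcs of length $t$ ending at the discontinuity points of the $\lambda_i$-periodic status function of $j$ on $\bS^1$ (there being at most $2/\lambda_i$ such points), so it has Lebesgue measure at most $2t/\lambda_i$; summing over the $n_i$ cells of module $i$ and over $i\leq k$ gives $\int_0^1\sum_{i=1}^k\Delta^{f,i}_{\theta,\theta+t}\,d\theta\leq 2t\sum_{i=1}^k n_i/\lambda_i$. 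Therefore one can pick $s_1$ (and $s_2$ accordingly) with $\sum_{i=1}^k\Delta^{f,i}_{s_1,s_2}\leq 2t\sum_{i=1}^k n_i/\lambda_i$. Combining with Proposition~\ref{prop-link-grid-place-cells}, which gives $\Delta^f_{s_1,s_2}\leq\sum_{i=1}^m\Delta^{f,i}_{s_1,s_2}=\sum_{i=1}^k\Delta^{f,i}_{s_1,s_2}$, and with $t\leq 3\max\{\rho,\lambda_{k+1}\}$, yields the displayed inequality and hence the main bound. I expect the shift construction and the verification $d(s_1,s_2)\geq\rho$ in every regime to be the main obstacle; the rest is bookkeeping.

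For the last assertion, it suffices to exhibit one index at which the minimum defining the bound is strictly below $1/6$, since then its integer part is $0$ and $T=\infty$. If there is $2\leq k\leq j_\rho$ with $(k-1)\lambda_k<\min_{1\leq i<k}[\lambda_i/(6n_i)]$, use the index $k-1$: since $k\leq j_\rho$ we have $\lambda_k\geq\rho$, so $\max\{\rho,\lambda_k\}=\lambda_k$ and $\sum_{i=1}^{k-1}\frac{n_i}{\lambda_i}\lambda_k\leq (k-1)\lambda_k\max_{1\leq i<k}\frac{n_i}{\lambda_i}<1/6$. If instead $j_\rho\rho<\min_{1\leq i\leq j_\rho}[\lambda_i/(6n_i)]$, use the index $j_\rho$: by definition of $j_\rho$ one has $\lambda_{j_\rho+1}<\rho$, so $\max\{\rho,\lambda_{j_\rho+1}\}=\rho$ and $\sum_{i=1}^{j_\rho}\frac{n_i}{\lambda_i}\rho\leq j_\rho\rho\max_{1\leq i\leq j_\rho}\frac{n_i}{\lambda_i}<1/6$, which concludes the proof.
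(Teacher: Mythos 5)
Your proof is correct, and it takes a genuinely different and cleaner route than the paper's. The paper proves the key combinatorial bound by a discrete pigeonhole argument: for each relevant $k$ it places equally spaced test points $\rho^1_\ell$ of step $\lambda_k$ (or $\bar\rho = r\lambda_{j_\rho+1}$) around the circle, propagates them to each lower module via the modular identity $\rho^1_\ell \ \mathrm{mod}\ \lambda_i = \rho^i_{\ell\ \mathrm{mod}\ D_{i+1:k}}$, and averages the discrete sums as in Proposition~\ref{prop_lower_bound_place_class}; this forces a case split on $j_\rho\geq 2$ versus $j_\rho=1$ and on $\rho\gtrless\lambda_{j_\rho}/2-\lambda_{j_\rho+1}/2$, with the parity hypothesis only invoked in the residual case $s_2=s_1+1/2$. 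You replace all of this with a single continuous averaging argument: fix a shift $t$ that is a multiple of $\lambda_{k+1}$ with $d(0,t)\geq\rho$ and $t\leq 3\max\{\rho,\lambda_{k+1}\}$, observe that $\int_0^1\sum_{i\leq k}\Delta^{f,i}_{\theta,\theta+t}\,d\theta\leq 2t\sum_{i\leq k}n_i/\lambda_i$ because each $\lambda_i$-periodic binary status function has at most $2/\lambda_i$ jumps and so the set where it differs from its $t$-shift has Lebesgue measure at most $2t/\lambda_i$, pick $\theta^*$ by pigeonhole, and close with Proposition~\ref{prop-link-grid-place-cells} and Lemma~\ref{mult_distance_equal_mod}-type reasoning for the modules $i>k$. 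Your choice of $t$ absorbs the paper's case analysis into a single parity lemma (noting that $\lambda_{k+1}^{-1}=\prod_{j\leq k}(\lambda_j/\lambda_{j+1})$ is even whenever $\lambda_1/\lambda_2$ is, and that the hypothesis $\rho\leq 1/2-\lambda_2/2$ covers the odd case uniformly in $k$), and I verified it yields $t\leq 3\max\{\rho,\lambda_{k+1}\}$ with $d(0,t)\geq\rho$ in every regime. What your approach buys is brevity and a uniform treatment of all $k\in\{1,\dots,m\}$ without splitting on $j_\rho$; what the paper's approach buys is a purely combinatorial argument that makes no reference to Lebesgue measure and stays closer in spirit to the proof of the place-cell lower bound. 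Both arrive at the same constant, and your reduction $T(\cG,\rho)=(\sup_f\min_{d\geq\rho}\Delta^f_{s_1,s_2})^{-1}$ and your handling of the ``$T=\infty$'' corollary match the paper.
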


\begin{proof}
We need to compute an upper bound for $\min_{s_1,s_2\in\bS^{1}:d(s_1,s_2)\geq\rho}\Delta^f_{s_1,s_2}$ uniformly on $f\in\cF_2$.
By the definition of $j_{\rho}$, if $j_{\rho}\geq 2$ then we have that
\[
\lambda_k\geq \rho>\lambda_{j_{\rho}+1}, \ \text{for all} \ 2\leq k\leq j_{\rho} \ \text{and} \ \rho\leq \lambda_1/2.
\]
When $j_{\rho}=1$ it holds that $\lambda_{2}<\rho\leq 1/2=\lambda_1/2.$



Let us consider first the case $j_{\rho}\geq 2$. 
In this case, let $2\leq k\leq j_{\rho}$ and take $s_1,s_2\in\bS^{1}$ such that $d(s_1,s_2)=\lambda_k\geq \rho$. By Lemma \ref{mult_distance_equal_mod} it follows that $s_1 \ \mod \ \lambda_i=s_2 \ \mod \ \lambda_i$ for all $k\leq i\leq m$ so that for all $j\in M_i$,
\[
f_{ij}(s_1)=f_{ij}(s_1 \ \mod \ \lambda_i)=f_{ij}(s_2 \ \mod \ \lambda_i)=f_{ij}(s_2),
\]
implying that $\Delta^{f,i}_{s_1,s_2}=0$.

Hence, by Proposition \ref{prop-link-grid-place-cells} we have that
\[
\Delta^{f}_{s_1,s_2}\leq \sum_{i=1}^{k-1}\Delta^{f,i}_{s_1,s_2}.
\]

For each $1\leq i\leq k-1$, let $\rho^{i}_{\ell}$ in $S^{\lambda_i}$ defined by $\theta_{\rho^{i}_{\ell}}=\ell\lambda_k$ with $0\leq \ell\leq D_{i+1:k}$, where $D_{i+1:k}:=\lambda_i/\lambda_k$ is an integer larger than $2$ for all $i<k$. For later use, let us also define $D_{2:1}=1.$ By arguing as in the proof of Proposition \ref{prop_lower_bound_place_class}, one can show that
\[
\sum_{\ell=1}^{D_{i+1:k}}\Delta^{f,i}_{\rho^i_{\ell},\rho^i_{\ell-1}}\leq 2n_i.
\]
Now, take $0\leq \ell\leq D_{2:k}, 1\leq i\leq k-1
$ and $j\in M_i$, and observe that 
$
f_{ij}(\rho^{1}_{\ell})=f_{ij}(\rho^{1}_{\ell} \ \mod \ \lambda_i).$ 

\medskip

\textbf{Claim.} It holds true that $\rho^{1}_{\ell} \ \mod \ \lambda_i=\rho^{i}_{\ell \ \mod \ D_{i+1:k}}$.

\bigskip

\textbf{Proof of the Claim.} Write $\ell=rD_{i+1:k}+\ell \ \mod \ D_{i+1:k}$ for some positive integer $r.$ Since $\lambda_i=D_{i+1:k}\lambda_k$, we deduce that 
\[
\ell \lambda_k=r\lambda_i+(\ell \ \mod \ D_{i+1:k})\lambda_k
\]
with $(\ell \ \mod \ D_{i+1:k})\lambda_k<\lambda_i$. As a consequence,  
we obtain that 
\[
\theta_{\rho^1_{\ell} \ \mod \ \lambda_i}=\theta_{\rho^1_{\ell}} \ \mod \ \lambda_i =\ell \lambda_k \ \mod \ \lambda_i  = (\ell \ \mod \ D_{i+1:k})\lambda_k=\theta_{\rho^i_{\ell \ \mod \ D_{i+1:k}}}, 
\]
implying that $\rho^1_{\ell} \ \mod \ \lambda_i = \rho^{i}_{\ell \ \mod \ D_{i+1:k}}$ (by the injectivity of the function $\bS^{1}\ni a\mapsto \theta_a$.) This concludes the proof of the Claim. 

\bigskip

The claim implies that $f_{ij}(\rho^{1}_{\ell})=f_{ij}(\rho^{i}_{\ell \ \mod \ D_{i+1:k}})$ for each $0\leq \ell\leq D_{2:k}, 1\leq i\leq k-1
$ and $j\in M_i$, so that (notice that $D_{2:k}=D_{2:i}D_{i+1:k}$),
\[
\sum_{\ell=1}^{D_{2:k}}\Delta^{f,i}_{\rho^1_{\ell},\rho^1_{\ell-1}}=D_{2:i}\sum_{\ell=1}^{D_{i+1:k}}\Delta^{f,i}_{\rho^i_{\ell},\rho^i_{\ell-1}}\leq D_{2:i}2n_i.
\]
Therefore, the above inequality implies that 
\[
\sum_{\ell=1}^{D_{2:k}}\sum_{i=1}^{k-1}\Delta^{f,i}_{\rho^1_{\ell},\rho^1_{\ell-1}}\leq 2(D_{2:1}n_1+D_{2:2}n_2+\ldots+D_{2:k-1}n_{k-1})
\]
which, in turn, ensures that 
\begin{align*}
D_{2:k}\min_{s_1,s_2\in\bS^{1}:d(s_1,s_2)=\lambda_k}\left\{\sum_{i=1}^{k-1}\Delta^{f,i}_{s_1,s_2}\right\}&\leq \sum_{\ell=1}^{D_{2:k}}\sum_{i=1}^{k-1}\Delta^{f,i}_{\rho^1_{\ell},\rho^1_{\ell-1}}\\& \leq 2(D_{2:1}n_1+D_{2:2}n_2+\ldots+D_{2:k-1}n_{k-1})    
\end{align*}
 or equivalently, 
 \[
 \min_{s_1,s_2\in\bS^{1}:d(s_1,s_2)=\lambda_k}\left\{\sum_{i=1}^{k-1}\Delta^{f,i}_{s_1,s_2}\right\}\leq \frac{2}{D_{2:k}}(D_{2:1}n_1+D_{2:2}n_2+\ldots+D_{2:k-1}n_{k-1})=2\lambda_{k}\sum_{i=1}^{k-1}\frac{n_i}{\lambda_i}, 
 \]
where in the equality we have used that $D_{2:k}=D_{2:i}D_{i+1:k}$ and $D_{i+1:k}=\lambda_i\lambda^{-1}_k$.

Hence, we have shown that for each $2\leq k\leq j_{\rho}$,
\[
\min_{s_1,s_2\in\bS^{1}:d(s_1,s_2)\geq \rho}\Delta^{f}_{s_1,s_2}\leq 2\lambda_{k}\sum_{i=1}^{k-1}\frac{n_i}{\lambda_i}=\sum_{i=1}^{k-1}\frac{2n_i}{\lambda_i}\max\left\{\lambda_{k},\rho\right\}.
\]

If $\rho>\lambda_{j_{\rho}}/2-\lambda_{j_{\rho}+1}/2$, then
\[\lambda_{j_{\rho}}< 2\rho +\lambda_{j_{\rho}+1}< 3\rho.\]
Hence
\[
\sum_{i=1}^{j_{\rho}-1}\frac{2n_i}{\lambda_i}\max\left\{\lambda_{j_{\rho}},\rho\right\}=\sum_{i=1}^{j_{\rho}-1}\frac{2n_i}{\lambda_i}\lambda_{j_{\rho}}< 3\sum_{i=1}^{j_{\rho}-1}\frac{2n_i}{\lambda_i}\rho
\]
and
for all $k\geq j_{\rho}$,
\[
3\sum_{i=1}^{k}\frac{2n_i}{\lambda_i}\max\left\{\rho, \lambda_{k+1}\right\}\geq 3\sum_{i=1}^{j_{\rho}}\frac{2n_i}{\lambda_i}\rho>3\sum_{i=1}^{j_{\rho}-1}\frac{2n_i}{\lambda_i}\rho,
\]
so that
\begin{multline*}
\min_{s_1,s_2\in\bS^{1}:d(s_1,s_2)\geq \rho}\Delta^{f}_{s_1,s_2}\leq \min_{1\leq k\leq j_{\rho}-1}\left\{\sum_{i=1}^{k}\frac{2n_i}{\lambda_i}\max\left\{\lambda_{k+1}, \rho\right\}\right\}\\
=\min\left\{\min_{1\leq k\leq j_{\rho}-1}\left\{\sum_{i=1}^{k}\frac{2n_i}{\lambda_i}\max\left\{\lambda_{k+1}, \rho\right\}\right\},\min_{j_{\rho}\leq k\leq m}\left\{3\sum_{i=1}^{k}\frac{2n_i}{\lambda_i}\max\left\{\lambda_{k+1}, \rho\right\}\right\}\right\}.
\end{multline*}
Therefore, we deduce that when $j_{\rho}\geq 2$ and $\rho>\lambda_{j_{\rho}}/2-\lambda_{j_{\rho}+1}/2$, it holds that
\begin{equation}
\label{ineq_1_upper_bound}
\min_{s_1,s_2\in\bS^{1}:d(s_1,s_2)\geq \rho}\Delta^{f}_{s_1,s_2}\leq 
3\min_{1\leq k\leq m}\left\{\sum_{i=1}^{k}\frac{2n_i}{\lambda_i}\max\left\{\lambda_{k+1}, \rho\right\}\right\}.
\end{equation}


Now, suppose that $\rho\leq\lambda_{j_{\rho}}/2-\lambda_{j_{\rho}+1}/2$. 
Let $r=\lceil\rho/\lambda_{j_{\rho}+1}\rceil$. If $D_{j_{\rho}+1:j_{\rho}+1}$ is even then $r < D_{j_{\rho}+1:j_{\rho}+1}/2$ because $r<\lambda_{j_{\rho}}/2$. If $D_{j_{\rho}+1:j_{\rho}+1}$ is odd, then  $\lambda_{j_{\rho}}/2-\lambda_{j_{\rho}+1}/2= \lambda_{j_{\rho}+1}(D_{j_{\rho}+1:j_{\rho}+1}-1)/2$ is a multiple of $\lambda_{j_{\rho}+1}$. So in both cases, $r <  D_{j_{\rho}+1:j_{\rho}+1}/2$.
Therefore  $L=\lfloor D_{j_{\rho}+1:j_{\rho}+1}/r\rfloor$ is always larger than 2. 

In module $j_\rho$, we construct $L+1$ different points $\rho_\ell^{j_\rho}$ of argument $\ell r \lambda_{j_{\rho}+1}$, for $\ell=0,...,L$ which leads to $L$ distinct pairs at distance $\bar\rho=r \lambda_{j_{\rho}+1}$.

By periodicity, we deploy these points on module $1$, there are $D_{2:j_\rho}L$ of those. Taking their modulo, they correspond on module $i=1,...,m$  to $D_{i+1:j_\rho}L$ pairs at distance $\bar\rho$. Each of these pairs are at distance a multiple of $\lambda_k$ for $k>j_\rho$ and therefore cannot be detected by the modules for $k>j_\rho$.

Therefore a similar argument as above lead us to
\[\min_{s_1,s_2 \in \bS^1:d(s_1,s_2)=\bar\rho} \Delta^f_{s_1,s_2} \leq \frac{2}{D_{2:j_\rho}L} (D_{2:1}n_1+...+D_{2:j_\rho}n_{j_\rho}).\]
Since $L\geq 2$, $L\geq 2/3 (D_{j_{\rho}+1:j_{\rho}+1}/r)$, hence
\[\min_{s_1,s_2 \in \bS^1:d(s_1,s_2)=\bar\rho} \Delta^f_{s_1,s_2} \leq 3 r \sum_{i=1}^{j_\rho} \frac{n_i}{\lambda_i}\lambda_{j_\rho+1}.\]
But $r\leq 2 \rho/\lambda_{j_{\rho}+1}$. Hence
\[\min_{s_1,s_2 \in \bS^1:d(s_1,s_2)=\bar\rho} \Delta^f_{s_1,s_2} \leq 3 \sum_{i=1}^{j_\rho} \frac{2n_i}{\lambda_i}\rho\]
Also, as before
\[3 \sum_{i=1}^{j_\rho} \frac{2n_i}{\lambda_i}\rho \leq 3 \sum_{i=1}^{k} \frac{2n_i}{\lambda_i}\max\left\{\lambda_{k+1}, \rho\right\},\]
for all $k\geq j_\rho$.
Finally since $\bar{\rho}\geq \rho$, we get that as long as $j_\rho\geq 2$, \eqref{ineq_1_upper_bound} holds.


The last case to treat is  when $j_{\rho}=1$. If $\rho\leq \lambda_1/2-\lambda_2/2$, we do as above with $r=\lceil \rho/\lambda_2\rceil$ and $\bar\rho=r\lambda_2\leq 1/2$, constructing pairs that can be distinguished only by module 1 and the same bound applies.

In the case $\rho\geq\lambda_1/2-\lambda_2/2 $ and $\lambda_1/\lambda_2$ is even, we can always take $s_1=0$ and $s_2=1/2$. Since $\lambda_1/\lambda_2$ is even, $d(s_1,s_2)$ is a multiple of $\lambda_2$ and therefore of all the other $\lambda_i$ for $i\geq 2$. Hence it can only be detected by the first module and
\[\min_{s_1,s_2\in\bS^{1}:d(s_1,s_2)\geq \rho}\Delta^{f}_{s_1,s_2}\leq n_1 \leq 3 n_1 \rho\leq 3 \sum_{i=1}^{k} \frac{2n_i}{\lambda_i}\max\left\{\lambda_{k+1}, \rho\right\},\]
for all $k\geq 1$ and we conclude as for the other cases.

To conclude, let us just remark that if there exists $2\leq k\leq j_\rho$ such that \[(k-1)\lambda_k<\min_{i<k}\left[\frac{\lambda_i}{6n_i}\right]=\frac{1}{6\max_{1\leq i<k}\{n_i/\lambda_i\}},\]
then 
$\sum_{i=1}^{k-1} \frac{n_i}{\lambda_i} \lambda_k \leq (k-1)\lambda_k\max_{1\leq i<k}\{\frac{n_i}{\lambda_i}\}<1/6,$ so that the integer part in the lower bound is null. The same phenomenon appears at $k=j_\rho$ if  $j_\rho\rho < \min_{i\leq j_\rho}\left[\frac{\lambda_i}{6n_i}\right]$.


\end{proof}

To understand better the bound given by Theorem \ref{thm_lower_bound_grid_class}, note that $1/ \lfloor 2n_i\rho/\lambda_i\rfloor$ is in fact the rate for the place cells code in module $i$. So, roughly up to multiplicative constant,   to distinguish at distance $\rho$, the code needs to be such that the modules of the  grid cells have to be coherent (i.e. the period $k\lambda_{k+1}$ needs to be detected by at least one of the modules before, for all $k$ such that $\lambda_{k+1}\geq \rho$) and that $j_\rho \rho$ also needs to be in the detection range of at least one of modules $i\leq j_\rho$. It is not clear whether the factor $k$ should be present or not.
Also, for the rate itself note that $\sum_{i=1}^k \frac{n_i}{\lambda_i} \lambda_{k+1}$ is not necessarily monotonous in $k$. Hence we could be in a situation where even if we are interested by small $\rho$ that cannot be detected by the first module, the rate of detection of $\lambda_2$ by the first module acts as a threshold that impacts the rates at $\rho$.

\subsection{Upper bounds and a particular adaptive code for $\cG((n_i,\lambda_i)_{i=1,...,m})$}

\begin{example}(An adaptive grid cell code)
\label{ex-adpative-code-grid-cell}
 This code is made of adaptive place cells code as in Example 3 for each of the modules. More precisely, the module $i$ is the set of neurons $M_i=\{n_{i-1}+1,\ldots, n_{i-1}+n_i\}$ and $f_{n_i+j,i}$ is associated with the points $a_{ij}, b_{ij}\in \bS^{\lambda_i}$ corresponding to the angles $\theta_{a_{ij}}=\frac{j\lambda_i}{2n_i}$ and $\theta_{b_{ij}}=\frac{(j+n_i)\lambda_i}{2n_i}$. Let us denote this code by $g_{gc}.$
\end{example} 
For the code $g_{gc}$, one can prove the following result.
\begin{proposition} 
\label{T_min_g_code_grid_cells}
For any $0\leq \rho\leq 1/2$,
\[
T(g_{gc},\rho)\leq 
\frac{4}{\min_{1\leq k\leq m}\sum_{j=1}^{k}\left\lfloor \frac{n_j}{\lambda_j} \max\left\{\lambda_{k+1}, \rho\right\} \right\rfloor},
\]
with the convention $\lambda_{m+1}=0$.
In particular, if we define $j_{\rho}=\max\{1\leq k\leq m: \lambda_k\geq \rho\}$, then for all $\rho$ large enough such that
\begin{itemize}
    \item for all $k\leq j_\rho$, there exists $j<k$ such that $ \lambda_k\geq \frac{\lambda_j}{n_j}$
    \item and there exists $j\leq j_\rho$ with $\rho\geq \frac{\lambda_j}{n_j}$, we have that
\end{itemize}
\begin{multline*}
	\frac{1}{6\min_{1\leq k\leq m}\sum_{j=1}^{k} \frac{n_j}{\lambda_j} \max\left\{\lambda_{k+1}, \rho\right\}}\leq T(\cG((n_i,\lambda_i)_{i=1,...,m}),\rho)\\
	\leq T(g_{gc},\rho)\leq 
\frac{16 \log_2(1/\rho)}{\min_{1\leq k\leq m}\sum_{j=1}^{k} \frac{n_j}{\lambda_j} \max\left\{\lambda_{k+1}, \rho\right\}},
\end{multline*}
meaning that $g_{gc}$ is adaptive in this range of $\rho$ up to a factor $\log_2(1/\rho)$.
\end{proposition}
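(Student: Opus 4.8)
The plan is to prove the two displayed inequalities in turn: first the raw bound $T(g_{gc},\rho)\le 4/\min_{1\le k\le m}b_k$ with $b_k:=\sum_{j=1}^{k}\bigl\lfloor\frac{n_j}{\lambda_j}\max\{\lambda_{k+1},\rho\}\bigr\rfloor$, then the ``in particular'' part by combining it with Theorem \ref{thm_lower_bound_grid_class} through a de-flooring step. For the raw bound, fix any pair $s_1,s_2\in\bS^1$ with $\delta:=d(s_1,s_2)\ge\rho$. Since the cells of module $i$ depend on $s$ only through $s\ \mathrm{mod}\ \lambda_i$, and since $g_{gc}$ restricted to module $i$ is, after rescaling $\bS^{\lambda_i}$ onto $\bS^1$ by the factor $1/\lambda_i$, exactly the adaptive place cells code $g$ of Example \ref{ex_adaptive_code} with $n_i$ cells, one has that $\Delta^{g_{gc},i}_{s_1,s_2}$ equals the $\Delta$ of that place code at the pair $(s_1\ \mathrm{mod}\ \lambda_i,\,s_2\ \mathrm{mod}\ \lambda_i)$, which sits at distance $\delta_i:=d_{\lambda_i}(s_1\ \mathrm{mod}\ \lambda_i,\,s_2\ \mathrm{mod}\ \lambda_i)$ on $\bS^{\lambda_i}$. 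Proposition \ref{T_min_g_code} (rescaled) then gives $\Delta^{g_{gc},i}_{s_1,s_2}\ge \lfloor 2n_i\delta_i/\lambda_i\rfloor/2$.

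Next I would run a short case analysis on the position of $\delta$ among the scales. Let $k^\ast=\max\{k:\lambda_k>\delta\}$; this is well defined with $k^\ast\ge1$ because $\lambda_1=1>1/2\ge\delta$. For every $i\le k^\ast-1$ we have $\lambda_i\ge\lambda_{k^\ast-1}\ge 2\lambda_{k^\ast}>2\delta$, so $\delta$ is strictly closer to $0$ than to any other multiple of $\lambda_i$, i.e. $\delta_i=\delta$; and if moreover $\delta\le\lambda_{k^\ast}/2$ the same holds for $i=k^\ast$. In that first situation $\max\{\lambda_{k^\ast+1},\rho\}\le\delta$, hence $\lfloor 2n_i\delta/\lambda_i\rfloor\ge\bigl\lfloor\frac{n_i}{\lambda_i}\max\{\lambda_{k^\ast+1},\rho\}\bigr\rfloor$ for all $i\le k^\ast$, and summing with Proposition \ref{prop-link-grid-place-cells} gives $\Delta^{g_{gc}}_{s_1,s_2}\ge\frac12\sum_{i=1}^{k^\ast}\Delta^{g_{gc},i}_{s_1,s_2}\ge\frac14 b_{k^\ast}\ge\frac14\min_k b_k$. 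In the complementary situation $\delta>\lambda_{k^\ast}/2$ — which forces $k^\ast\ge2$ since $\delta\le 1/2=\lambda_1/2$ — I would simply discard module $k^\ast$: because $2\delta>\lambda_{k^\ast}=\max\{\lambda_{k^\ast},\rho\}$ (as $\rho\le\delta<\lambda_{k^\ast}$) one gets $\lfloor 2n_i\delta/\lambda_i\rfloor\ge\bigl\lfloor\frac{n_i}{\lambda_i}\lambda_{k^\ast}\bigr\rfloor$ for $i\le k^\ast-1$, hence $\Delta^{g_{gc}}_{s_1,s_2}\ge\frac14 b_{k^\ast-1}\ge\frac14\min_k b_k$. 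In both cases $T_{min}(g_{gc},s_1,s_2)=1/\Delta^{g_{gc}}_{s_1,s_2}\le 4/\min_k b_k$, and since the pair is arbitrary the first bound follows.

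For the adaptivity statement I would invoke Theorem \ref{thm_lower_bound_grid_class}, which in the stated range of $\rho$ gives $T(\cG((n_i,\lambda_i)_{i=1,\dots,m}),\rho)\ge 1/\lfloor 6\min_k a_k\rfloor\ge 1/(6\min_k a_k)$, where $a_k:=\sum_{j=1}^{k}\frac{n_j}{\lambda_j}\max\{\lambda_{k+1},\rho\}$, and then remove the floors in $b_k$ at the cost of a factor $\log_2(1/\rho)$. Write $c_k=\max\{\lambda_{k+1},\rho\}\in(0,1/2]$; a term $\frac{n_j}{\lambda_j}c_k$ can be $<1$ only if $\lambda_j>c_k$, and since $\lambda_j\le 2^{1-j}$ there are at most $\log_2(1/c_k)+1\le\log_2(1/\rho)+1$ such indices $j$, whose total contribution to $a_k$ is $<\log_2(1/\rho)+1$, while each remaining term is $\ge1$, hence at most twice its floor, contributing at most $2b_k$. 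The two structural hypotheses are precisely what guarantee $b_k\ge1$ for every $k\in\{1,\dots,m\}$: for $k<j_\rho$ apply the first bullet at index $k+1\le j_\rho$ to obtain $j\le k$ with $\frac{n_j}{\lambda_j}\lambda_{k+1}\ge1$; for $k\ge j_\rho$ one has $c_k=\rho$ (because $\lambda_{k+1}\le\lambda_{j_\rho+1}<\rho$) and the second bullet supplies $j\le j_\rho\le k$ with $\frac{n_j}{\lambda_j}\rho\ge1$. Hence $a_k\le 2b_k+(\log_2(1/\rho)+1)b_k\le 4\log_2(1/\rho)\,b_k$ (using $\rho\le1/2$), and taking the $k$ that realizes $\min_k b_k$ yields $\min_k a_k\le 4\log_2(1/\rho)\min_k b_k$; combined with the first bound this gives $T(g_{gc},\rho)\le 16\log_2(1/\rho)/\min_k a_k$, which closes the chain.

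I expect the main obstacle to be the borderline case in the second paragraph where $\delta$ lies just below $\lambda_{k^\ast}$: there module $k^\ast$ effectively goes blind ($\delta_{k^\ast}$ can be arbitrarily small), one must fall back to the bottleneck index $k^\ast-1$, and the comparison $\lfloor 2n_i\delta/\lambda_i\rfloor\ge\lfloor n_i\lambda_{k^\ast}/\lambda_i\rfloor$ has to absorb the factor-$2$ loss coming from reducing modulo $\lambda_{k^\ast}$ — getting the constants to close there is the crux. A secondary, more bookkeeping-heavy point is verifying that the two hypotheses really force $b_k\ge1$ for \emph{every} $k$ and not merely for the minimizing one, since that uniformity is exactly what makes the de-flooring lose only a multiplicative $\log_2(1/\rho)$ with an absolute constant.
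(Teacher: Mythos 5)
Your proof is correct, and although structurally parallel to the paper's it differs in two technically meaningful ways. For the raw bound $T(g_{gc},\rho)\le 4/\min_k b_k$, the paper defines $j_{s_1,s_2}=\max\{i:\lambda_i/2\ge d(s_1,s_2)\}$, applies Lemma \ref{equal_distance_mod} to those modules, and then immediately replaces the true per-module distance $\delta=d(s_1,s_2)$ by $\rho$, writing $\frac12\sum_{j\le j_{s_1,s_2}}\lfloor 2n_j\rho/\lambda_j\rfloor=\frac12\sum_{j\le j_{s_1,s_2}}\lfloor\frac{2n_j}{\lambda_j}\max\{\lambda_{j_{s_1,s_2}+1}/2,\rho\}\rfloor$; that displayed equality requires $\rho\ge\lambda_{j_{s_1,s_2}+1}/2$, which need not hold when $\delta$ is much larger than $\rho$ (a large $\delta$ makes $j_{s_1,s_2}$ small and hence $\lambda_{j_{s_1,s_2}+1}$ large). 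Your version retains $\delta$ through the estimate and only then compares $2\delta$ to $\max\{\lambda_{k^\ast+1},\rho\}$ or to $\lambda_{k^\ast}$, at the cost of a two-case split on whether $\delta\le\lambda_{k^\ast}/2$; this makes the inequality $\Delta^{g_{gc}}_{s_1,s_2}\ge\frac14\min_k b_k$ go through cleanly for every pair and is, to my reading, tighter than what the paper actually establishes. For the de-flooring, the paper lower-bounds $\sum_j\lfloor\cdot\rfloor$ by the largest summand, uses the structural hypotheses to pass to half its unfloored value, then to the average, and finally bounds the number of terms $k\le 2\log_2(1/\rho)$; you instead split the $a_k$-terms into those $\ge1$ (each at most twice its floor) and those $<1$ (at most $\log_2(1/\rho)+1$ of them by dyadic decay of the $\lambda_j$, and absorbed via $b_k\ge1$). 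Both routes use the two bullet hypotheses exactly to guarantee $b_k\ge1$ for every $k$ and both land on the factor $4\log_2(1/\rho)$, so the final constants coincide. One shared implicit assumption: like the paper, you invoke Theorem \ref{thm_lower_bound_grid_class} without restating its parity/range condition on $\lambda_1/\lambda_2$; this is not a gap specific to your write-up.
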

\begin{proof}

Take a pair $s_1,s_2\in \bS^1$ such that $d(s_1,s_2)\geq \rho$ and define 
\[
j_{s_1,s_2}=\max\left\{i\in [n]: \frac{\lambda_i}{2}\geq d(s_1,s_2) \right\}.
\]
 By using Lemma \ref{equal_distance_mod}, we then conclude that for $1\leq j\leq j_{s_1,s_2}$,
 \[
 d(s_1 \ \mod \ \lambda_j, s_2 \ \mod \ \lambda_j)=d(s_1, s_2)\geq \rho.
 \]
 Hence, proceeding as in the proof of Proposition \ref{T_min_g_code}, we have then that 
 \begin{align*}
 \sum_{i=1}^m \Delta^{g_{gc},i}_{s_1,s_2}&\geq
 \sum_{i=1}^{j_{s_1,s_2}} \Delta^{g_{gc},i}_{s_1,s_2}\\
 &\geq \sum_{i=1}^{j_{s_1,s_2}} \min_{u_1,u_2\in\bS ^{\lambda_i}:d_{\lambda_i}(u_1,u_2)\geq \rho}\Delta^{g_{gc},i}_{u_1,u_2}
 \\
 &\geq 
 \frac{1}{2}\sum_{j=1}^{j_{s_1,s_2}}\left\lfloor \frac{2n_j}{\lambda_j}\rho \right\rfloor =\frac{1}{2}\sum_{j=1}^{j_{s_1,s_2}}\left\lfloor \frac{2n_j}{\lambda_j} \max\left\{\frac{\lambda_{j_{s_1,s_2}+1}}{2},\rho\right\} \right\rfloor.
 \end{align*}
 Combining the above inequality and Proposition \ref{prop-link-grid-place-cells}, we deduce that
 \begin{align*}
\min_{s_1,s_2\in\bS^{1}:d(s_1,s_2)\geq \rho}\Delta^{g_{gc}}_{s_1,s_2}& \geq \frac{1}{2}\min_{s_1,s_2\in\bS^{1}:d(s_1,s_2)\geq \rho}
\sum_{i=1}^m \Delta^{g_{gc},i}_{s_1,s_2}\\
&\geq \frac{1}{4} 
\min_{1\leq k\leq m}\sum_{j=1}^{k}\left\lfloor \frac{2n_j}{\lambda_j} \max\left\{\frac{\lambda_{k+1}}{2}, \rho\right\} \right\rfloor.\\
&\geq \frac{1}{4} 
\min_{1\leq k\leq m}
 \sum_{j=1}^{k} \left\lfloor \frac{n_j}{\lambda_j} \max\left\{\lambda_{k+1}, \rho\right\} \right\rfloor.
  \end{align*}
\end{proof}

To conclude the proof we need to show that
\begin{equation}
\label{ineq_1_proof_upper_bound_g_code_grid_cells}
\min_{1\leq k\leq m}\sum_{j=1}^{k}\left\lfloor \frac{n_j}{\lambda_j} \max\left\{\lambda_{k+1}, \rho\right\} \right\rfloor\geq 4\log(\rho^{-1})\min_{1\leq k\leq m}\sum_{j=1}^{k}\frac{n_j}{\lambda_j} \max\left\{\lambda_{k+1}, \rho\right\}.
\end{equation}

To that end, first observe that
\begin{equation}
\label{eq_1_proof_upper_bound_g_code_grid_cells}
\min_{1\leq k\leq m}\sum_{j=1}^{k}\left\lfloor \frac{n_j}{\lambda_j} \max\left\{\lambda_{k+1}, \rho\right\} \right\rfloor=\min\left\{\min_{2\leq k\leq j_{\rho}}\sum_{j=1}^{k-1}\left\lfloor \frac{n_j}{\lambda_j}\lambda_{k} \right\rfloor,\sum_{j=1}^{j_{\rho}}\left\lfloor \frac{n_j}{\lambda_j}\rho \right\rfloor\right\},
\end{equation}
and also that for each $k\leq j_{\rho}$ 
\begin{align*}
\sum_{j=1}^{k-1}\left\lfloor \frac{n_j}{\lambda_j}\lambda_{k} \right\rfloor &\geq \max_{1\leq j<k}\left\lfloor \frac{n_j}{\lambda_j}\lambda_{k} \right\rfloor\\
&\geq \frac{1}{2}\max_{1\leq j<k} \frac{n_j}{\lambda_j}\lambda_{k}\geq \frac{1}{2k}\sum_{j=1}^{k-1}\frac{n_j}{\lambda_j}\lambda_{k}.
\end{align*}
Since $\rho\leq \lambda_{j_{\rho}}\leq 2^{-(j_{\rho}-1)}$ it follows that $2\log_2(\rho^{-1})\geq \log(\rho^{-1})+1\geq j_{\rho}\geq k$, so that from the above inequality we obtain that
\[
\sum_{j=1}^{k-1}\left\lfloor \frac{n_j}{\lambda_j}\lambda_{k} \right\rfloor\geq \frac{1}{4\log_2(\rho^{-1})}\sum_{j=1}^{k-1}\frac{n_j}{\lambda_j}\lambda_{k}.
\]
By similar arguments, one can also show that
\[
\sum_{j=1}^{j_{\rho}}\left\lfloor \frac{n_j}{\lambda_j}\rho \right\rfloor\geq \frac{1}{4\log_2(\rho^{-1})}\sum_{j=1}^{j_{\rho}}\frac{n_j}{\lambda_j}\rho.
\]
Combining the last two inequalities above with \eqref{eq_1_proof_upper_bound_g_code_grid_cells}, we then deduce that
\begin{align*}
\min_{1\leq k\leq m}\sum_{j=1}^{k}\left\lfloor \frac{n_j}{\lambda_j} \max\left\{\lambda_{k+1}, \rho\right\} \right\rfloor &\geq \frac{1}{4\log_2(\rho^{-1})}\min\left\{\min_{2\leq k\leq j_{\rho}}\sum_{j=1}^{k-1} \frac{n_j}{\lambda_j}\lambda_{k},\sum_{j=1}^{j_{\rho}}\frac{n_j}{\lambda_j}\rho \right\}\\
&=\frac{1}{4\log_2(\rho^{-1})}\min_{1\leq k\leq m}\sum_{j=1}^{k}\frac{n_j}{\lambda_j} \max\left\{\lambda_{k+1}, \rho\right\},
\end{align*}
proving \eqref{ineq_1_proof_upper_bound_g_code_grid_cells}, and the result follows.

\subsection{Random codes are also adaptive in $\cG((n_i,\lambda_i)_{i=1,...,m})$}

\begin{example}[Random code for grid cells]
\label{ex_random_code_grid_cekks}
Again we use random codes on each of the modules.
More precisely, for each $1\leq i\leq m$, let $M_i=\{n_{i-1}+1,\ldots, n_{i-1}+n_i\}$ denote the set of neurons in module $i$.
Let $(A_{ij})_{j\in M_i}$ and $(B_{ij})_{j\in M_i}$ be independent and uniformly distributed points on $\bS^{\lambda_i}$. 
The random code $f^{r}_{gc}$ for the grid cells is defined as $f^{r}_{gc}=(f^r_{gc,11},\ldots, f^r_{gc,ij},\ldots,f^r_{gc,Mn})$, where $f^r_{gc,ij}$ is given by \eqref{grid-def} with $a_{ij}=A_{ij}$ and $b_{ij}=B_{ij}$ for each $i\in [m]$ and $j\in M_i$. 

\begin{proposition}
There exist constants $K_1,K_2>0$ such that if $\min_{1\leq i\leq m}\{n_i\}\geq K_1$ the following property holds:  for  $x\in\bR_{>0}$ and $0\leq \rho\leq 1/2$,
\(T(f^r_{gc},\rho)\)  is upper bound by the inverse of the ceiling function of
\[
 \min_{1\leq k\leq m}\displaystyle\sum_{i=1}^k \frac{n_i}{4}\left(\frac{\max\left\{\rho, \frac{\lambda_{k+1}}{2}\right\}}{\lambda_i}-\frac{K_2}{n_i^{1/2}}\left[1+(x+\log(m))^{1/2}\left[1+\left(\frac{(x+\log(m))}{n_i}\right)^{1/2}\right]\right]\right) ,
\]
with probability $1-e^{-x},$ with the convention $\lambda_{m+1}=0$.
In particular, denoting $j_{\rho}=\max\{1\leq k\leq m: \lambda_k\geq \rho\}$ we have that
\begin{equation}
\label{random_code_grid_cell_is_adpative_in_a_range}
T(f^r_{gc},\rho)\leq \frac{16}{\left\lfloor \min_{1\leq k\leq m}\left\{\sum_{i=1}^{k}\frac{n_i}{\lambda_i}\right\} \right\rfloor}
\end{equation}
with probability $1-e^{-x}$, that is $f^r_{gc}$ is adaptive in the range of $\rho$ where the following extra conditions holds:
\begin{itemize}
    \item for all $k\leq j_\rho$,  
    \[\lambda_k \geq \frac{K_3}{\sqrt{\sum_{i<k}\frac{n_i}{\lambda_i}}},\]
    \item and also that 
    \[\rho\geq \frac{K_3}{\sqrt{\sum_{i\leq j_{\rho}}\frac{n_i}{\lambda_i}}},\]
\end{itemize}
where $K_3:=4\sqrt{2}K_2(1+x+\log(m)+(x+\log(m)^{1/2})$.
\end{proposition}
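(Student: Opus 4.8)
The strategy is to run the single--circle argument of Proposition~\ref{random_code_adaptive_place_cells} separately inside each module, to control the resulting $m$ empirical--process suprema by a union bound over modules (which is where the additive $\log m$ enters), and then to recombine the modules through the discrepancy decomposition of Proposition~\ref{prop-link-grid-place-cells} together with the modular reduction already used in the proof of Proposition~\ref{T_min_g_code_grid_cells}.

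\emph{Reduction and per--module estimate.} Fix $s_1,s_2\in\bS^{1}$ with $d(s_1,s_2)\ge\rho$ and put $k=\max\{i:\lambda_i/2\ge d(s_1,s_2)\}$. Since a cell of module $i$ responds to $s$ only through $s\bmod\lambda_i$, the restriction of $f^r_{gc}$ to module $i$ is an i.i.d.\ uniform random place--cells code on $\bS^{\lambda_i}$, and $\Delta^{f^r_{gc},i}_{s_1,s_2}$ is the discrepancy of that code at the pair $(s_1\bmod\lambda_i,\,s_2\bmod\lambda_i)$. By Lemma~\ref{equal_distance_mod}, for every $i\le k$ this pair has $d_{\lambda_i}$--distance exactly $d(s_1,s_2)\ge r_k:=\max\{\rho,\lambda_{k+1}/2\}$, and $r_k\le\lambda_i/2$ since $d(s_1,s_2)\le\lambda_k/2\le\lambda_i/2$. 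Dropping the non\nobreakdash-negative terms $i>k$ and invoking Proposition~\ref{prop-link-grid-place-cells},
\[\Delta^{f^r_{gc}}_{s_1,s_2}\ \ge\ \frac12\sum_{i=1}^{k}\ \min_{u_1,u_2\in\bS^{\lambda_i}:\,d_{\lambda_i}(u_1,u_2)\ge r_k}\Delta^{f^r_{gc},i}_{u_1,u_2}.\]
Rescaling $\bS^{\lambda_i}$ to $\bS^{1}$ turns Lemma~\ref{prob_of_unif_interval_having_2_specific_points} into $\bE({\bf 1}_{C_{u_1,u_2}}(\xi))=1-2\frac{\delta}{\lambda_i}(1-\frac{\delta}{\lambda_i})\le 1-\frac{\delta}{\lambda_i}$ with $\delta=|\theta_{u_2}-\theta_{u_1}|$; so, exactly as in the proof of Proposition~\ref{random_code_adaptive_place_cells}, the inner minimum is at least $\frac{n_i}{2}\bigl(\frac{r_k}{\lambda_i}-W_i\bigr)$, where $W_i$ is the supremum, over the class $\mathcal{C}_i$ of the sets $C_{u_1,u_2}$, of the centred empirical mean of the $n_i$ pairs $(A_{ij},B_{ij})$. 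After rescaling, each $C_{u_1,u_2}$ is the same $\le 5$--fold union of axis--parallel rectangles as on $\bS^{1}$, hence $V(\mathcal{C}_i)$ is bounded by an absolute constant (via \cite{Csikos}) \emph{uniformly in $i$}; Bousquet's inequality ((5.49) of \cite{Massart}) and Lemma~6.4 of \cite{Massart} then give, for every $t>0$ and every $i$ with $n_i\ge K_1$, that $W_i\le K_2\,n_i^{-1/2}(1+t^{1/2}(1+(t/n_i)^{1/2}))$ on an event of probability $\ge 1-e^{-t}$, with $K_1,K_2$ independent of $i$.

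\emph{Union bound and the adaptive range.} Apply the previous step in each module with $t=x+\log m$ and take a union bound: with probability at least $1-m\,e^{-(x+\log m)}=1-e^{-x}$ all $m$ bounds hold simultaneously. Substituting back, minimising over $(s_1,s_2)$ with the parametrisation $k=j_{s_1,s_2}$ and extending the minimum to all $1\le k\le m$ (which only weakens it), and using $T(f^r_{gc},\rho)=1/\min_{d\ge\rho}\Delta^{f^r_{gc}}_{s_1,s_2}$ with the integrality of $\Delta$, yields the stated upper bound involving the ceiling. For the second part, under the two hypotheses $\lambda_k\ge K_3/\sqrt{\sum_{i<k}n_i/\lambda_i}$ for $k\le j_\rho$ and $\rho\ge K_3/\sqrt{\sum_{i\le j_\rho}n_i/\lambda_i}$, with $K_3=4\sqrt2\,K_2(1+x+\log m+(x+\log m)^{1/2})$, one checks term by term that the $O(n_i^{-1/2})$ corrections are at most half of $\sum_{i=1}^{k}n_i r_k/\lambda_i$ for every admissible $k$ and that the surviving half exceeds $1$, so that passing to the floor costs at most a further bounded factor; absorbing these losses into the absolute constant $16$ and noting that for $k>j_\rho$ the sum $\sum_{i=1}^{k}n_i\rho/\lambda_i$ is positive and increasing (so the minimum is attained at some $k\le j_\rho$), one obtains \eqref{random_code_grid_cell_is_adpative_in_a_range}. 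Comparing this with the lower bound of Theorem~\ref{thm_lower_bound_grid_class}, which carries the same quantity $\min_{1\le k\le m}\sum_{i=1}^{k}(n_i/\lambda_i)\max\{\lambda_{k+1},\rho\}$ up to an absolute constant and a ceiling, establishes adaptivity on this range.

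\textbf{Main obstacle.} The genuinely delicate points are (i) ensuring that the constants $K_1,K_2$ coming from Bousquet's inequality can be taken uniform across modules despite the differing radii --- this rests on $a\mapsto\theta_a/\lambda_i$ being a VC--isomorphism, so $V(\mathcal{C}_i)$ is scale free --- and (ii) the calibration of $K_3$, i.e.\ choosing it so that the square--root fluctuations are simultaneously absorbed into a fixed fraction of $\max\{\rho,\lambda_{k+1}/2\}/\lambda_i$ for \emph{all} relevant pairs $(k,i)$ while keeping the floored quantity at least $1$. The empirical--process estimate itself is essentially a verbatim rerun of Proposition~\ref{random_code_adaptive_place_cells}; the fiddly work is the bookkeeping that converts the per\nobreakdash-pair bounds into the $\min_{1\le k\le m}$ form through $j_{s_1,s_2}$, combined with the ceiling and the range conditions.
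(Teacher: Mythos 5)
Your proposal reproduces the paper's argument faithfully: per–module reduction to the place-cell case via Proposition~\ref{prop-link-grid-place-cells} and Lemma~\ref{equal_distance_mod}, Bousquet plus the VC bound from Lemma 6.4 of \cite{Massart} with scale-invariant constants, a union bound over modules using $x_i=x+\log m$, and Cauchy--Schwarz with $\lambda_i\leq 2^{-(i-1)}$ to absorb the fluctuation terms under the stated range conditions. This is essentially the same proof as the paper's, with the only cosmetic difference that you make explicit the rescaling argument showing the VC dimension is the same on every $\bS^{\lambda_i}$, which the paper leaves implicit.
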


\begin{proof}
Fix $0\leq \rho\leq 1/2$, take $s_1,s_2\in\bS^{1}$ with $d(s_1,s_2)\geq \rho$ and define, as in the proof of Proposition \ref{T_min_g_code_grid_cells}, 
\[
j_{s_1,s_2}=\max\left\{i\in [n]: \frac{\lambda_i}{2}\geq d(s_1,s_2) \right\}.
\]
Lemma \ref{equal_distance_mod} ensures that for $1\leq j\leq j_{s_1,s_2}$,
 \[
 d(s_1 \ \mod \ \lambda_j, s_2 \ \mod \ \lambda_j)=d(s_1, s_2)\geq \rho.
 \] 
Hence, by Proposition \ref{prop-link-grid-place-cells} we have that
\begin{equation}
\label{proof_adpat_random_code_grid_ineq_1}
\Delta^{f^r_{gc}}_{s_1,s_2}\geq \frac{1}{2}\sum_{i=1}^{j_{s_1,s_2}}
\Delta^{f^r_{gc},i}_{s_1,s_2}\geq\frac{1}{2}\sum_{i=1}^{j_{s_1,s_2}}
\min_{u_1,u_2\in\bS^{\lambda_i}:d(u_1,u_2)\geq \rho}\Delta^{f^r_{gc},i}_{u_1,u_2}.      
\end{equation}
Next, proceeding as in the proof of Proposition \ref{random_code_adaptive_place_cells} one can show that
\begin{equation}
\label{proof_adpat_random_code_grid_ineq_2}
\min_{u_1,u_2\in\bS^{\lambda_i}:d(u_1,u_2)\geq \rho}\Delta^{f^r_{gc},i}_{u_1,u_2}\geq \frac{n_i}{2}\left(\frac{\rho}{\lambda_i}-W_{\cC_i}\right),
\end{equation}
where $W_{\cC_i}$ is the random variable given by
\[
W_{\cC_i}=\max_{C\in\cC_i}\frac{1}{n_i}\sum_{j\in M_i}\left({\bf 1}_{C_i}(\xi_{ij})-\bE\left({\bf 1}_{C_i}(\xi_{ij})\right)\right),
\]
with the set $\cC_i=\left\{C_{u_1,u_2}:u_1,u_2\in\bS^{\lambda_i}:d(u_1,u_2)\geq \rho\right\}$ and $\xi_{ij}=(A_{ij},B_{ij})\in \bS^{\lambda_i}.$
Arguing for each $W_{\cC_i}$ as it was done for $W_{\cC}$ in the proof of Proposition \ref{random_code_adaptive_place_cells}, one deduces that  if $\min_{1\leq i \leq m } \{n_i\} \geq K_1$, then for all $x_1,\ldots,x_{m}\in\bR_{>0}$,  
\begin{equation}
\label{proof_adpat_random_code_grid_ineq_3}
\min_{u_1,u_2\in\bS^{\lambda_i}: d(u_1,u_2)\geq\rho}\Delta^{f^r_{gc},i}_{u_1,u_2}\geq \frac{n_i}{2}\left(\frac{\rho}{\lambda_i}-n_i^{-1/2}K_2\left(1+x_i^{1/2}\left(1+(x_i/n_i)^{1/2}\right)\right)\right)  \ \text{for all} \ i\leq m, 
\end{equation}
with probability at least $1-\sum_{i=1}^m\exp(-x_i).$ Therefore, combining inequalities \eqref{proof_adpat_random_code_grid_ineq_1}, \eqref{proof_adpat_random_code_grid_ineq_2} and \eqref{proof_adpat_random_code_grid_ineq_3} it follows if $\min_{1\leq i \leq m } \{n_i\} \geq K^2$ then for all $1\leq k\leq m$ and all $x_1,\ldots,x_{m}\in\bR_{>0}$ 
\begin{multline*}
\min_{s_1,s_2\in\bS^{1}:d(s_1,s_2)\geq \rho,j_{s_1,s_2}=  k}\Delta^{f^r_{gc}}_{s_1,s_2}\\
\geq \frac{1}{2}\sum_{i=1}^k \frac{n_i}{2}\left(\frac{\max\left\{\rho,(\lambda_{k+1}/2)\right\}}{\lambda_i}-n_i^{-1/2}K\left(1+x_i^{1/2}\left(1+(x_i/n_i)^{1/2}\right)\right)\right)\\
\geq \frac{1}{2}\min_{1\leq k\leq m}\sum_{i=1}^k \frac{n_i}{2}\left(\frac{\max\left\{\rho,(\lambda_{k+1}/2)\right\}}{\lambda_i}-n_i^{-1/2}K\left(1+x_i^{1/2}\left(1+(x_i/n_i)^{1/2}\right)\right)\right),
\end{multline*}
with probability larger or equal than $1-\sum_{i=1}^m\exp(-x_i)$. Finally, since
\[
\min_{s_1,s_2\in\bS^{1}:d(s_1,s_2)\geq \rho}\Delta^{f^r_{gc}}_{s_1,s_2}=\min_{1\leq k\leq M}\min_{s_1,s_2\in\bS^{1}:d(s_1,s_2)\geq \rho,j_{s_1,s_2}=  k}\Delta^{f^r_{gc}}_{s_1,s_2},
\]
the result follows from the above inequality with $x_i=x+\log(m)$.

It remains to prove \eqref{random_code_grid_cell_is_adpative_in_a_range}. 
Denote $\eta_x=(1+x+\log(m)+(x+\log(m)^{1/2})$ and $\eta_{x,i}=(1+(x+\log(m)^{1/2}+(x+\log(m)/n^{1/2}_i)$, and observe that $\eta_x\geq\eta_{x,i}$ which ensures that 
\begin{align*}
\sum_{i=1}^{k-1}\frac{n_i}{4}\left(\frac{\lambda_k}{2\lambda_{i}}-n^{-1/2}_i K_2\eta_{x,i}\right)\geq \frac{1}{8}\sum_{i=1}^{k-1}\frac{n_i\lambda_k}{2\lambda_{i}}-\frac{K_2\eta_{x}}{4}\sum_{i=1}^{k-1}\frac{n^{1/2}_i}{\lambda^{1/2}_i}\lambda^{1/2}_i.
\end{align*}
By Cauchy-Schwarz inequality, we know that 
\[
\sum_{i=1}^{k-1}\frac{n^{1/2}_i}{\lambda^{1/2}_i}\lambda^{1/2}_i\leq \sqrt{\sum_{i=1}^{k-1}\frac{n_i}{\lambda_i}}\sqrt{\sum_{i=1}^{k-1}\lambda_i}\leq \sqrt{2}{\sum_{i=1}^{k-1}\frac{n_i}{\lambda_i}}, 
\]
where in the second inequality we have used that $\lambda_i\leq 2^{-(i-1)}$ for each $1\leq i\leq m$.
As a consequence, we deduce that 
\[\sum_{i=1}^{k-1}\frac{n_i}{4}\left(\frac{\lambda_k}{2\lambda_{i}}-n^{-1/2}_i K_2\eta_{x,i}\right)\geq \frac{1}{8}\sum_{i=1}^{k-1}\frac{n_i}{\lambda_i}\left(\lambda_k-\frac{2\sqrt{2}K_2\eta_x}{\sqrt{\sum_{i=1}^{k-1}n_i/\lambda_i}}\right)\geq \frac{1}{16}\sum_{i=1}^{k-1}\frac{n_i}{\lambda_i}\lambda_k,
\]
Similarly, one can show that if the second extra condition holds then we also have that
\begin{equation}
\label{ineq_2_proof_upper_bound_random_code_adp_in_a_range}
\sum_{i=1}^{j_{\rho}}\frac{n_i}{4}\left(\frac{\rho} {2\lambda_i}-n_i^{-1/2}K_2\eta_{x,i}\right)\geq \frac{1}{16} \sum_{i=1}^{j_{\rho}}\left(\frac{n_i\rho}{\lambda_i}\right).    
\end{equation}
From the last to inequalities above it is easy to deduce \eqref{random_code_grid_cell_is_adpative_in_a_range}, concluding the proof of the result.
\end{proof}

\end{example}

\subsection{What can we say about the general class $\bar{\cG}$?}
The rate 
\[ 
\frac{1}{\min_{1\leq k\leq m}\sum_{j=1}^{k}\left\lfloor \frac{2n_j}{\lambda_j} \max\left\{\frac{\lambda_{k+1}}{2},\rho\right\} \right\rfloor},\]
is very difficult to understand. What would be the best choice of $n_i$'s and $\lambda_i$'s ?

\paragraph{Minimal range of detection}

\begin{example}[Extreme Dyadic code]
\label{ex-dyadic_code}
 In the extreme dyadic code $f_d$, we have $m=n$ modules $M_1,\ldots, M_n$ where the $i$-th module $M_i=\{i\}$ has period $\lambda_i=(1/2)^{i-1}$ and whose points  $a_{ii}:=a_i,b_{ii}:=b_i\in \bS^{\lambda_i}$ are such that $\theta_{a_i}=0$ and $\theta_{b_i}=(1/2)^i$.
 In the sequel, let us denote $f_{d,ii}=f_{d,i}$ so that $f_{d}=(f_{d1},\ldots, f_{dn})$.
\end{example}

\begin{proposition}
Let $f_d$ be the extreme dyadic code. For any $\rho\geq 1/2^n$, we have $T(f_d,\rho)=1$.
\end{proposition}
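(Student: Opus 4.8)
The plan is to reduce the statement, via \eqref{Trho} and \eqref{Tmin}, to the combinatorial identity $\min_{s_1,s_2\in\bS^{1}:\,d(s_1,s_2)\geq\rho}\Delta^{f_d}_{s_1,s_2}=1$. The starting point is that each module $M_i=\{i\}$ carries a single cell with $\theta_{a_i}=0$ and $\theta_{b_i}=\lambda_i/2=2^{-i}$, so unwinding \eqref{grid-def} yields, for every $s\in\bS^{1}$ and $i\in[n]$,
\[
i\in I^{f_d}_s \iff \theta_s \bmod 2^{-(i-1)}\in[0,2^{-i}) \iff \lfloor 2^i\theta_s\rfloor \ \text{is even};
\]
in other words $I^{f_d}_s$ records the first $n$ binary digits of $\theta_s$ (with the convention that the $i$-th digit is $0$ exactly when $i\in I^{f_d}_s$).

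The key step is the following characterization: $I^{f_d}_{s_1}=I^{f_d}_{s_2}$ if and only if $\theta_{s_1}$ and $\theta_{s_2}$ belong to a common dyadic interval $[j2^{-n},(j+1)2^{-n})$ with $j\in\{0,\dots,2^{n}-1\}$. I would prove this from the elementary recursion $\lfloor 2^i\theta\rfloor=2\lfloor 2^{i-1}\theta\rfloor+\bigl(\lfloor 2^i\theta\rfloor\bmod 2\bigr)$, which, iterated from $i=n$ down to $i=0$ and using that $\lfloor\theta_s\rfloor=0$ on $[0,1)$, gives $\lfloor 2^n\theta_s\rfloor=\sum_{i=1}^{n}2^{n-i}\,\mathbf 1_{\{i\notin I^{f_d}_s\}}$. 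Hence $I^{f_d}_s$ determines $\lfloor 2^n\theta_s\rfloor$ and conversely, which is exactly the claimed equivalence.

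With this in hand the bound $T(f_d,\rho)\leq 1$ is immediate: if $\theta_{s_1},\theta_{s_2}$ lie in the same interval of length $2^{-n}$ then $|\theta_{s_1}-\theta_{s_2}|<2^{-n}$, so $d(s_1,s_2)\leq|\theta_{s_1}-\theta_{s_2}|<2^{-n}\leq\rho$ (the geodesic distance never exceeds $|\theta_{s_1}-\theta_{s_2}|$). Contrapositively, $d(s_1,s_2)\geq\rho$ forces $I^{f_d}_{s_1}\neq I^{f_d}_{s_2}$, whence $\Delta^{f_d}_{s_1,s_2}\geq 1$ and $T_{min}(f_d,s_1,s_2)=1/\Delta^{f_d}_{s_1,s_2}\leq 1$; maximizing over admissible pairs gives $T(f_d,\rho)\leq 1$.

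For the matching lower bound I would exhibit the explicit pair $\theta_{s_1}=0$, $\theta_{s_2}=1/2$: the displayed description of $I^{f_d}_s$ gives $I^{f_d}_{s_1}=\{1,\dots,n\}$ and $I^{f_d}_{s_2}=\{2,\dots,n\}$, hence $\Delta^{f_d}_{s_1,s_2}=1$, while $d(s_1,s_2)=1/2\geq\rho$. Thus $\min_{d(s_1,s_2)\geq\rho}\Delta^{f_d}_{s_1,s_2}=1$, and therefore $T(f_d,\rho)=1$. The only step that requires a genuine argument is the dyadic-interval characterization in the second paragraph; the rest is bookkeeping with the modulus and the inequality $d(s_1,s_2)\leq|\theta_{s_1}-\theta_{s_2}|$.
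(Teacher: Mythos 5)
Your proof is correct, and its core is the same as the paper's: read off the active set $I^{f_d}_s$ as the first $n$ binary digits of $\theta_s$ (the content of the paper's Lemma~\ref{Lem:base_2}), then use the witness pair $\theta_{s_1}=0$, $\theta_{s_2}=1/2$ for the matching lower bound. Where you diverge is in how you establish that $d(s_1,s_2)\geq\rho\geq 2^{-n}$ forces $I^{f_d}_{s_1}\neq I^{f_d}_{s_2}$. The paper argues by contradiction: assuming all first $n$ digits agree, it deduces $d(s_1,s_2)\leq 2^{-n}$, combines this with $\rho\geq 2^{-n}$ to force exact equality $d(s_1,s_2)=\lambda_n/2$, and then invokes Lemma~\ref{equal_distance_mod} to see that the $n$-th module must still separate the two points. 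You instead package the digit correspondence into the clean bijection $I^{f_d}_s \leftrightarrow \lfloor 2^n\theta_s\rfloor$ (proved by iterating $\lfloor 2^i\theta\rfloor=2\lfloor 2^{i-1}\theta\rfloor+(\lfloor 2^i\theta\rfloor\bmod 2)$), so that equal active sets means belonging to the same dyadic cell $[j2^{-n},(j+1)2^{-n})$, whence trivially $d(s_1,s_2)\leq|\theta_{s_1}-\theta_{s_2}|<2^{-n}\leq\rho$. Your route bypasses the boundary-case bookkeeping and Lemma~\ref{equal_distance_mod} entirely, making the key step shorter and more transparent, at the modest cost of re-deriving the digit identity rather than citing the stated auxiliary lemma.
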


\begin{proof}
First, we will show that $T(f_d,\rho, \alpha)\leq 1$. To that end, we need to show that for all $s_1,s_2\in [0,1)$ such that $d(s_1,s_2)\geq \rho$, we have $\max\left\{|I^{f_d}_{s_1}\setminus I^{f_d}_{s_2}|, |I^{f_d}_{s_2}\setminus I^{f_d}_{s_1}|\right\}\geq 1.$
Notice that by Lemma \ref{Lem:base_2}, we can write
$
(\theta_{s_i})=\sum_{j=1}^{\infty}(\theta_{s_i})_{j}/2^{j}, i\in \{1,2\}.
$

\textbf{Claim.} There exists $j\in [n]$ such that $(\theta_{s_1})_j\neq (\theta_{s_2})_j$. 

Assuming that the claim is true, we can then use again Lemma \ref{Lem:base_2} to conclude that $f_{dj}(s_1)\neq f_{dj}(s_2)$ implying that $\max\left\{|I^{f_d}_{s_1}\setminus I^{f_d}_{s_2}|, |I^{f_d}_{s_2}\setminus I^{f_d}_{s_1}|\right\}\geq 1.$

We now prove the Claim. We argue by contradiction. Suppose that 
$(\theta_{s_1})_j=(\theta_{s_2})_j$ for all $j\in [n]$. 
Suppose also that $\theta_{s_1}\leq \theta_{s_2} \leq \theta_{s_1}+1/2$ (if this is not the case we change $s_2$ by  $s_1$). 
In this case, 
\[
1/2^n\leq\rho\leq d(s_1,s_2)= |\theta_{s_1}-\theta_{s_2}|=\sum_{j=n+1}^{\infty}|((\theta_{s_1})_j-(\theta_{s_2})_j)|/2^j\leq 1/2^n=\lambda_n/2,
\]
and Lemma \ref{equal_distance_mod} implies that
\[
d(s_1 \ \mod \ \lambda_n,s_2 \ \mod \ \lambda_n)=\frac{1}{2^n}.
\]
This implies that $f_{dn}(s_1)=f_{dn}(s_1 \ \mod \ \lambda_n)\neq f_{dn}(s_2 \ \mod \ \lambda_n)=f_{dn}(s_2)$ and we can apply Lemma \ref{Lem:base_2} to deduce that $(\theta_{s_1})_n\neq (\theta_{s_2})_n$, which is absurd. This concludes the proof of the Claim. 

To conclude the proof, observe that by taking $\theta_{s_1}=0$ and $\theta_{s_2}=1/2$, we have $(\theta_{s_1})_j=0$ for all $j\geq 1$, $(\theta_{s_2})_1=1$ and $(\theta_{s_2})_j=0$ for all $j\geq 2$. By Lemma $\ref{Lem:base_2}$ we have that $f_{d1}(s_1)\neq f_{d1}(s_2)$, $f_{dj}(s_1)=f_{dj}(s_2)$ for all $2\leq j\leq n$ and $d(s_1,s_2)=|\theta_{s_1}-\theta_{s_2}|=1/2\geq \rho$. Hence, $T(f_d,\rho)=1$ and the result follows.

\end{proof}

This result needs to be put in perspective with Proposition \ref{prop:distinction_general_code} : the extreme dyadic code is able to reach the best precision that one can have, that is $1/2^n$, but it is unable to reach a faster minimax discrimination time when $\rho$ increases. So either (whatever the code in $\bar{\cG}$) one cannot achieve a faster rate than $1$ or it means that this extreme dyadic code cannot be adaptive. The following example shows a faster rate for larger $\rho$, which means that we are in the second case.


\begin{example}(Grid cells - adaptive balanced)
For a given $1 \leq m\leq n$, let $\lambda_i=2^{-(i-1)}$ and $n_i=\lfloor n/m\rfloor$ for $1\leq i\leq m-1$, and $\lambda_m=2^{-(m-1)}$ and $n_m=\lfloor n/m\rfloor+n \ \mod \ m$. We call balanced grid cells code the class $\cG_{b,m}:=\mathcal{G}((n_i,\lambda_i)_{i=1,...,m})$ corresponding to these choices
of $\lambda_i$'s and $n_i$'s. Note that each code in this class has the same number of neurons per module (except the last one).  
In what follows, let us denote $g^{b,m}_{gc}$ the code $g_{gc}$ defined in  Example \ref{ex-adpative-code-grid-cell} belonging to the class of balanced grid cells code $\cG_{b,m}$. 

\begin{proposition}
\label{balanced-grid-cells-code_adpative}
Let $1\leq m\leq n$ such that 
$n\geq 2m$. For any $\rho$ such that $1/2^m\leq \rho\leq 1/2$, \[
\frac{1}{3\lfloor n/m\rfloor}\leq T(\cG_{b,m},\rho)\leq T_{min}(g^{b,m}_{gc},\rho)\leq \frac{16}{\lfloor n/m\rfloor},
\]
that is, the code $g^{b,m}_{gc}$ is adaptive in the range $\rho \geq 1/2^m$  in the class $\cG_{b,m}$.
 \end{proposition}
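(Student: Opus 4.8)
The plan is to read off the lower bound from Theorem~\ref{thm_lower_bound_grid_class} and the upper bound from Proposition~\ref{T_min_g_code_grid_cells}, both evaluated at the balanced parameters $\lambda_i=2^{-(i-1)}$, $n_i=\lfloor n/m\rfloor$ for $i<m$ and $n_m=\lfloor n/m\rfloor+(n\bmod m)$, and then to check that the two resulting quantities coincide up to an absolute constant on the whole range $1/2^m\le\rho\le1/2$. Write $q=\lfloor n/m\rfloor$; from $n\ge 2m$ we have $q\ge 2$, and $\tfrac{n_i}{\lambda_i}=n_i2^{i-1}$ with $n_i=q$ for $i<m$ and $n_m\ge q$. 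We may assume $m\ge 2$: the case $m=1$ is exactly the place-cell statement (Corollary~\ref{minimax_place} for $T(\cP,1/2)$ and Proposition~\ref{T_min_g_code} for the code $g=g^{b,1}_{gc}$).

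For the lower bound, since $\lambda_1/\lambda_2=2$ is even, Theorem~\ref{thm_lower_bound_grid_class} applies for all $0\le\rho\le1/2$ and gives
\[
T(\cG_{b,m},\rho)\ \ge\ \frac{1}{\big\lfloor 6\min_{1\le k\le m}h(k)\big\rfloor},\qquad
h(k):=\Big(\sum_{i=1}^{k}\tfrac{n_i}{\lambda_i}\Big)\max\{\rho,\lambda_{k+1}\}.
\]
It thus suffices to prove $\min_{1\le k\le m}h(k)=q/2$, since $6\cdot q/2=3q$ is an integer and hence the floor is exactly $3q$. The value $q/2$ is attained at $k=1$: there $h(1)=q\max\{\rho,1/2\}=q/2$ because $\rho\le1/2$. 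For $2\le k\le m-1$ one has $\sum_{i=1}^k\tfrac{n_i}{\lambda_i}=q(2^k-1)$, and whether or not $\lambda_{k+1}=2^{-k}\ge\rho$ one gets $h(k)\ge q(2^k-1)2^{-k}=q(1-2^{-k})\ge\tfrac34 q>\tfrac q2$. For $k=m$, using $n_m\ge q$ and $\lambda_{m+1}=0$ together with $\rho\ge 1/2^m$, $h(m)\ge q(2^m-1)\rho\ge q(1-2^{-m})\ge\tfrac q2$. Hence $\min_k h(k)=q/2$ and $T(\cG_{b,m},\rho)\ge\tfrac1{3q}$.

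The middle inequality is immediate since $g^{b,m}_{gc}\in\cG_{b,m}$. For the upper bound, Proposition~\ref{T_min_g_code_grid_cells} applied to $g^{b,m}_{gc}$ yields
\[
T(g^{b,m}_{gc},\rho)\ \le\ \frac{4}{D},\qquad
D:=\min_{1\le k\le m}\sum_{j=1}^{k}\Big\lfloor\tfrac{n_j}{\lambda_j}\max\{\lambda_{k+1},\rho\}\Big\rfloor.
\]
Keeping only the $j=k$ term of each inner sum, its argument is at least $\tfrac{n_k}{\lambda_k}\lambda_{k+1}=n_k2^{k-1}2^{-k}=n_k/2\ge q/2\ge1$ (and for $k=m$ the same estimate holds with $\rho\ge 1/2^m$ in place of $\lambda_{m+1}=0$); since $\lfloor x\rfloor\ge x/2$ for $x\ge1$, that term is $\ge q/4$, so $D\ge q/4$ and $T(g^{b,m}_{gc},\rho)\le 16/q=16/\lfloor n/m\rfloor$. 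Chaining the three displays gives the claimed chain of inequalities, and since $3$ and $16$ are absolute constants, $g^{b,m}_{gc}$ is adaptive in $\cG_{b,m}$ on the range $\rho\in[1/2^m,1/2]$. The only genuinely delicate point is the exact evaluation $\min_{1\le k\le m}h(k)=q/2$: one must split the modules $k$ according to whether $\lambda_{k+1}\ge\rho$, treat the terminal module $k=m$ separately, and invoke both $\rho\le1/2$ (for $k=1$) and $\rho\ge1/2^m$ (for $k=m$); everything else is a direct substitution into the two quoted results plus the elementary bound $\lfloor x\rfloor\ge x/2$ valid for $x\ge1$.
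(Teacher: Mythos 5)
Your proof is correct and leads to exactly the stated constants. Both you and the paper reduce everything to evaluating the two quoted bounds at the balanced parameters, and your treatment of the lower bound is essentially the paper's (you show $h(1)=q/2$ is the minimum by bounding the other terms from below; the paper shows the relevant sequence $x_k$ is increasing and compares $x_1$ with $x_{j_\rho}$). Where you genuinely diverge is in the evaluation of the floored minimum
\[
D=\min_{1\le k\le m}\sum_{j=1}^{k}\left\lfloor\tfrac{n_j}{\lambda_j}\max\{\lambda_{k+1},\rho\}\right\rfloor
\]
for the upper bound. You drop every summand except $j=k$, observe that its argument is always at least $q/2\ge 1$ thanks to $\lambda_{k+1}/\lambda_k=1/2$ (or $\rho\ge 2^{-m}$ in the terminal module), and apply $\lfloor x\rfloor\ge x/2$ once. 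The paper instead defines the sequence $y_\ell$, shows it is nondecreasing up to $j_\rho-1$, then separately estimates $y_{j_\rho}$ via an auxiliary index $j_{\min}$ and a multi-step chain of inequalities; in particular it invokes $\rho\,2^{j_\rho}\ge 2$, which is not actually guaranteed (one only has $1\le \rho\,2^{j_\rho}\le 2$), so the paper's estimate of $y_{j_\rho}$ is delicate at the boundary. Your single-term argument sidesteps this entirely and is both shorter and more robust, while giving the identical constants $3$ and $16$. One minor remark: your special-casing of $m=1$ via Corollary~\ref{minimax_place} and Proposition~\ref{T_min_g_code} is fine, but also unnecessary — with the convention $\lambda_{m+1}=0$ the same keep-one-term computation (using $\rho\ge 2^{-m}$) handles $m=1$ as part of the terminal-module case.
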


\begin{proof}
Let us first compute the corresponding lower bound for $T(\cG_{b,m})$ provided by Theorem \ref{thm_lower_bound_grid_class}.
Notice that 
for all $2\leq k\leq j_{\rho}$ we have that
\[
\sum_{i=1}^{k-1}\frac{n_i\max\left\{\rho,\lambda_{k}\right\}}{\lambda_i}=\sum_{i=1}^{k-1}\frac{n_i\lambda_{k}}{\lambda_i}=\lfloor n/m\rfloor\sum_{i=1}^{k-1}2^{-i}=\lfloor n/m\rfloor\left(1-2^{-(k-1)}\right):=x_{k-1}. \]
Moreover, one can check that 
\begin{align*}
\sum_{i=1}^{j_{\rho}}\frac{n_i\max\left\{\rho,\lambda_{j_{\rho}+1}\right\}}{\lambda_i}&\geq  \rho\lfloor n/m\rfloor\sum_{i=1}^{j_{\rho}}\frac{1}{\lambda_i}\\
&=\rho\lfloor n/m\rfloor (2^{j_{\rho}}-1):=x_{j_{\rho}}.    
\end{align*}
Since $x_{k+1}-x_k=2\lfloor n/m\rfloor 2^{-(k-2)}>0$ for $1\leq k\leq j_{\rho}-2$, it follows that
\[
\min\left\{\min_{2\leq k\leq j_{\rho}}\sum_{i=1}^{k-1}\frac{n_i\max\left\{\rho,\lambda_{k}\right\}}{\lambda_i},\sum_{i=1}^{j_{\rho}}\frac{n_i\max\left\{\rho,\lambda_{j_{\rho}+1}\right\}}{\lambda_i}\right\}=\min\{x_1,x_{j_{\rho}}\}.
\]
Now, for any $1/2^m\leq \rho\leq 1/2$ it holds that 
$2^{-j_{\rho}}=\lambda_{j_{\rho}+1}\leq \rho \leq \lambda_{j_{\rho}}$ so that
$2\rho(2^{j_{\rho}}-1)\geq 2(1-\rho)\geq 2(1/2)=1$, implying that  $x_1\leq x_{j_{\rho}}$, that is  $\min\{x_1,x_{j_{\rho}}\}=x_1=(\lfloor n/m \rfloor)/2$. Hence
Theorem \ref{thm_lower_bound_grid_class} implies that
\begin{equation}
\label{proof_ineq_1lower-bound-adaptive-balanced}
T(\cG_{b,m},\rho)\geq \frac{1}{3\lfloor n/m\rfloor}.
\end{equation}
 
Next, we will compute the upper bound for $T_{min}(g^{m,b}_{gc},\rho)$ using Proposition \ref{T_min_g_code_grid_cells}.
First, observe that for all $2\leq k\leq j_{\rho}$,
\[
y_{k-1}:=\sum_{j=1}^{k-1}\left\lfloor \frac{n_j}{\lambda_j} \max\left\{\lambda_{k},\rho\right\}\right\rfloor=\left\lfloor \lfloor n/m \rfloor \frac{1}{2}\right\rfloor+\ldots+\left\lfloor \lfloor n/m \rfloor \frac{1}{2^{k-1}}\right\rfloor  
\]
so that $y_1\leq y_2<\ldots\leq y_{j_{\rho}-1}$. Let, 
\begin{align*}
y_{j_{\rho}}:=\sum_{j=1}^{j_{\rho}}\left\lfloor \frac{n_j}{\lambda_j} \max\left\{\lambda_{j_{\rho}+1}, \rho\right\}\right\rfloor&=\sum_{j=0}^{j_{\rho}-1}\left\lfloor \lfloor n/m \rfloor \rho 2^j\right\rfloor.
\end{align*}
implying that 
\[
\min_{1\leq k\leq m}\sum_{j=1}^{k}\left\lfloor \frac{n_j}{\lambda_j} \max\left\{\lambda_{k+1},\rho\right\} \right\rfloor=\min\{y_1,y_{j_{\rho}}\}.
\]
Since, 
$\lfloor n/m \rfloor /2\geq 1$ we have that $y_1\geq \lfloor n/m \rfloor /4$ (here we use that $\lfloor x\rfloor\geq x/2$ when $x\geq 1$). Moreover,
\begin{align*}
y_{j_{\rho}}&=\sum_{j=0}^{j_{\rho}-1}\left\lfloor \lfloor n/m \rfloor \rho 2^j\right\rfloor\\
&=\sum_{j=0:\lfloor n/m \rfloor \rho 2^j\geq 1}^{j_{\rho}-1}\left\lfloor \lfloor n/m \rfloor \rho 2^j\right\rfloor>(\lfloor n/m \rfloor/2)\sum_{j=0:\lfloor n/m \rfloor \rho 2^j\geq 1}^{j_{\rho}-1} \rho 2^j,
\end{align*}
so that if we denote $j_{min}=\min\{0\leq j\leq j_{\rho}-1::\lfloor n/m \rfloor \rho 2^j\geq 1\}$, then 
\[
y_{j_{\rho}}>(\lfloor n/m \rfloor/2)\sum_{j=j_{min}}^{j_{\rho}-1} \rho 2^j=(\lfloor n/m \rfloor/2)2^{j_{min}}\rho(2^{j_{\rho}-j_{min}}-1).
\]
Finally, since $\rho 2^{j_\rho}\geq 2$ and since $\lfloor n/m \rfloor\geq 2$, one can check that $j_{min}\leq j_{\rho}-2$. 
As a consequence, 
\begin{align*}
y_{j_{\rho}}&>(\lfloor n/m \rfloor/2)2^{j_{min}}\rho(2^{j_{\rho}-j_{min}}-1)\\
&\geq (\lfloor n/m \rfloor/2)(2^{j_{\rho}-j_{min}+1}/2^{j_{\rho}}-\rho)2^{j_{min}}
=(\lfloor n/m \rfloor/2)(2-\rho 2^{j_{min}})\geq \lfloor n/m \rfloor \textcolor{blue}{(1-1/4)}\\
&>\lfloor n/m \rfloor /4.
\end{align*}
Hence, we have shown that if $\lfloor n/m \rfloor\geq 2$, then
$\min\{y_1,y_{j_{\rho}}\}\geq (\lfloor n/m \rfloor/4)$ and Proposition  \ref{T_min_g_code_grid_cells} ensures that 
\[
T(\cG_{b,m},\rho)\leq T_{min}(g^{b,m}_{gc},\rho)\leq \frac{16}{\lfloor n/m\rfloor}.
\]
The result follows from   \eqref{proof_ineq_1lower-bound-adaptive-balanced} and the inequality above.
\end{proof}

This result about adaptivity is very interesting. Indeed it tells us that whatever the balanced adaptive code that we use with $m$ modules, the minimax discrimination time does not vary as a function of $\rho$ and in particular does not decrease when the distance between two stimuli increases. It stays constant at $m/n$ up to multiplicative constants. In the extreme case with $m=n$ modules, we recover the extreme dyadic code whose minimax discrimination time is 1 whatever the distance. On the other hand, and 
as a corollary of Proposition \eqref{balanced-grid-cells-code_adpative}, we have the following result.

\begin{corollary}
\label{cor_rate_tuned_and_balanced_grid_cells}
For all $2^{-n/2}\leq \rho\leq 1/2$, let $m=\lfloor \log_2(\rho^{-1}) \rfloor$. Then the code $g^{b,m}_{gc}$ satisfies 
\[
\frac{1}{6}\left\lfloor \frac{\log_2(\rho^{-1})}{n} \right\rfloor\leq \frac{1}{3\lfloor n/m\rfloor}\leq T(\cG_{b,m},\rho)\leq T_{min}(g^{b,m}_{gc},\rho)\leq 32\left\lceil\frac{\log_2(\rho^{-1})}{n}\right\rfloor.
\]
\end{corollary}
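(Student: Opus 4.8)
The plan is to deduce the corollary directly from Proposition~\ref{balanced-grid-cells-code_adpative}, specialised to a number of modules $m$ adapted to the scale $\rho$, and then to run a short arithmetic argument that turns $\lfloor n/m\rfloor$ into expressions in $\log_2(\rho^{-1})/n$.

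First I would fix $\rho\in[2^{-n/2},1/2]$ and take $m$ as in the statement, which I read as the least integer with $2^{-m}\le\rho$, i.e.\ $\log_2(\rho^{-1})\le m<\log_2(\rho^{-1})+1$. The first real step is to check the three hypotheses of Proposition~\ref{balanced-grid-cells-code_adpative}. Since $\rho\le 1/2$ we get $\log_2(\rho^{-1})\ge 1$, hence $m\ge 1$; since $\rho\ge 2^{-n/2}$ we get $\log_2(\rho^{-1})\le n/2$, hence $m\le n/2$ (for odd $n$ one instead has $m\le(n+1)/2$, which only shrinks the admissible range by a constant), giving both $m\le n$ and $n\ge 2m$; and $2^{-m}\le\rho\le 1/2$ holds by the choice of $m$ and the hypothesis on $\rho$. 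Proposition~\ref{balanced-grid-cells-code_adpative} then yields immediately
\[
\frac{1}{3\lfloor n/m\rfloor}\le T(\cG_{b,m},\rho)\le T_{min}(g^{b,m}_{gc},\rho)\le \frac{16}{\lfloor n/m\rfloor},
\]
which are the two central inequalities of the corollary.

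It then remains to bound $1/(3\lfloor n/m\rfloor)$ from below and $16/\lfloor n/m\rfloor$ from above by the two outer quantities. For the left one, $\rho\ge 2^{-n/2}$ gives $\log_2(\rho^{-1})/n\le 1/2<1$, so $\lfloor\log_2(\rho^{-1})/n\rfloor=0$ and the inequality is trivial. For the right one, $n\ge 2m$ gives $\lfloor n/m\rfloor\ge 2$, so $16/\lfloor n/m\rfloor\le 8$, while $\log_2(\rho^{-1})>0$ gives $\lceil\log_2(\rho^{-1})/n\rceil\ge 1$, whence $16/\lfloor n/m\rfloor\le 8\le 32\le 32\lceil\log_2(\rho^{-1})/n\rceil$; chaining the four inequalities closes the argument. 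If one prefers scale‑matched bounds of the form $c_1\lfloor n/\log_2(\rho^{-1})\rfloor^{-1}\le T\le c_2\lfloor n/\log_2(\rho^{-1})\rfloor^{-1}$, the same computation applies after noting that $\log_2(\rho^{-1})\le m\le 2\log_2(\rho^{-1})$ (using $\log_2(\rho^{-1})\ge 1$), so $\lfloor n/m\rfloor$ and $\lfloor n/\log_2(\rho^{-1})\rfloor$ differ by at most an absolute multiplicative constant.

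The step I expect to require the most care is the floor/ceiling bookkeeping: one must pick $m$ to be exactly the least integer with $2^{-m}\le\rho$, so that the upper bound $16/\lfloor n/m\rfloor$ is simultaneously valid (the hypothesis of Proposition~\ref{balanced-grid-cells-code_adpative} is met) and of the right order, and one must handle the tight boundary case where $\log_2(\rho^{-1})$ is near $n/2$ with $n$ odd, where $n\ge 2m$ only just holds or needs the range shrunk by a constant. Beyond this, the corollary carries no new mathematical content: it is Proposition~\ref{balanced-grid-cells-code_adpative} read with the number of modules tuned to $\rho$.
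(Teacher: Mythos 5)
Your argument is in substance the right (and essentially only) route — specialise Proposition~\ref{balanced-grid-cells-code_adpative} to the given $m$ and then compare $\lfloor n/m\rfloor$ with $\log_2(\rho^{-1})/n$ — and since the paper itself supplies no proof beyond declaring the corollary ``straightforward'', there is nothing to compare against. But you should surface two discrepancies rather than quietly work around them.

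First, you read $m$ as ``the least integer with $2^{-m}\le\rho$'', i.e.\ $m=\lceil\log_2(\rho^{-1})\rceil$, whereas the statement literally sets $m=\lfloor\log_2(\rho^{-1})\rfloor$. With the floor one has $\rho\le 2^{-m}$ (with equality only when $\rho$ is a dyadic point), which is the \emph{opposite} of Proposition~\ref{balanced-grid-cells-code_adpative}'s hypothesis $1/2^m\le\rho\le 1/2$. So as written the corollary invokes the proposition outside its stated range, and the proof of that proposition does genuinely use $\rho\ge 2^{-m}$ (it is what forces the minimum to be attained at $x_1$ rather than $x_{j_\rho}$). The ceiling is the correct choice and should be stated openly, not absorbed into an ``I read''.

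Second, your observation that the outer quantities, read exactly as typeset, collapse to $0$ (since $\log_2(\rho^{-1})/n\le 1/2<1$ forces the floor to vanish) and to $32$ (since $0<\log_2(\rho^{-1})/n<1$ forces the ceiling to equal $1$) is correct — but it means those outer inequalities carry no information. Given that the text immediately after the corollary advertises a rate of order $\log_2(\rho^{-1})/n$, the plain reading is that the displayed bounds contain typos, and the outer quantities were presumably meant to be something like $\lfloor\log_2(\rho^{-1})\rfloor/(6n)$ and $32\lceil\log_2(\rho^{-1})\rceil/n$, or the equivalent $c\,\lfloor n/\log_2(\rho^{-1})\rfloor^{-1}$ forms you sketch at the end. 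Your key comparisons ($m\le\log_2(\rho^{-1})+1$, $\lfloor n/m\rfloor\ge 2$, and $m\lfloor n/m\rfloor\le n<2m\lfloor n/m\rfloor$ when $n\ge 2m$) are exactly what one needs to establish that corrected version, so you should verify that form rather than stopping at ``the literal outer bounds are $0$ and $32$, hence trivial''. The odd-$n$ boundary point you raise is a genuine but minor caveat of the same kind.
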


The proof of Corollary \ref{cor_rate_tuned_and_balanced_grid_cells}  is straightforward. Note that this result shows that if we know in advance at which distance $\rho$ one needs to detect, one can use a balanced adaptive code with $m=\lfloor \log_2(\rho^{-1}) \rfloor$ modules to reach a rate much faster than the place cells codes. Indeed, $\log_2(\rho^{-1})\ll \rho^{-1}$ when $\rho$ is small. On the other hand, once $m$ is fixed Proposition \ref{balanced-grid-cells-code_adpative} tells us that the minimax discrimination time is then constant and cannot decreases when $\rho$ increases as $\log_2(\rho^{-1})/n$. In this sense, balanced grid cell codes cannot be adaptive. The question remains open for unbalanced grid cells codes.

\end{example}

\hspace*{-0.5cm}\fbox{%
   \begin{minipage}{0.95\textwidth}
\begin{center}   
\textbf{\large Summary of the results on grid cells codes}
\end{center}

\begin{enumerate}
    \item One can compute the minimax discrimination rate on $\cG((n_i,\lambda_i)_{i=1,...,m}).$
    \item In general, grid cells code obtained with the adaptive code of Example 3 on each module are able to reach this minimax discrimination rate, for every $\rho$ up to a $\log_2(\rho^{-1})$ factor but on a restricted range of $\rho$.
    \item The random codes do not lose this extra $\log_2(\rho^{-1})$ factor but they have a different range of $\rho$.
    \item Extreme dyadic codes can reach the best possible precision in $2^{-n}$. However, their discrimination rate cannot be faster for larger $\rho$.
    \item Balanced grid cells code obtained with the adaptive code of Example 3 on each module  with number of modules $m=\log_2(\rho^{-1})$ are able to reach the rate $\log_2(\rho^{-1})/n$, which is faster than the corresponding minimax rate for place cells code. However, balanced grid cells code cannot go faster when the distance between the stimuli increases.
    \item In particular, we do not know the minimax discrimination rate over the general class $\bar{\cG}$ and we do not know if adaptivity is possible there. However, we do know that the extreme dyadic code which achieves the precision $2^{-n}$ or the balanced grid cells code cannot be adaptive.
    \end{enumerate}
\end{minipage}%
}

\section{Numerical illustration}

We have simulated what happens for $n=100$ cells with firing rate $\mu=30$. We used 5 different configurations, as indicated below.
\begin{itemize}
    \item Place cells - adaptive. That is the code $f$ is given by Example 3 
    \item Place cells - random. The code $f$ is picked at random as in Example 4 
    \item Grid cells - adaptive balanced. It consists in 20 modules with $\lambda_i=2^{-(i-1)}$. All the 20 modules have the same $n_i=5$. Inside each module, the code is taken as Example 3.
    \item Grid cells - adaptive decreasing. It consists in 20 modules with $\lambda_i=2^{-(i-1)}$, but the vector of $n_i$ is given by 
    $(15, 13, 11, 10,  8,  7,  6,  5,  4,  4,  3,  3,  2,  2,  2,  1,  1,  1,  1,  1)$. Inside each module, the code is taken as Example 3.
    \item Grid cells - random balanced. It consists in 20 modules with $\lambda_i=2^{-(i-1)}$. All the 20 modules have the same $n_i=5$. Inside each module, the code is taken at random as Example 4.
\end{itemize}

We picked $s=1/3$, which is not in any of the periods of the different modules. We also picked $s'=s+\rho$ with $\rho$ in a grid between $2^{-21}$ and 0.5.

\begin{figure}
    \centering
  \begin{tabular}{cc}
   \includegraphics[width=0.47\textwidth]{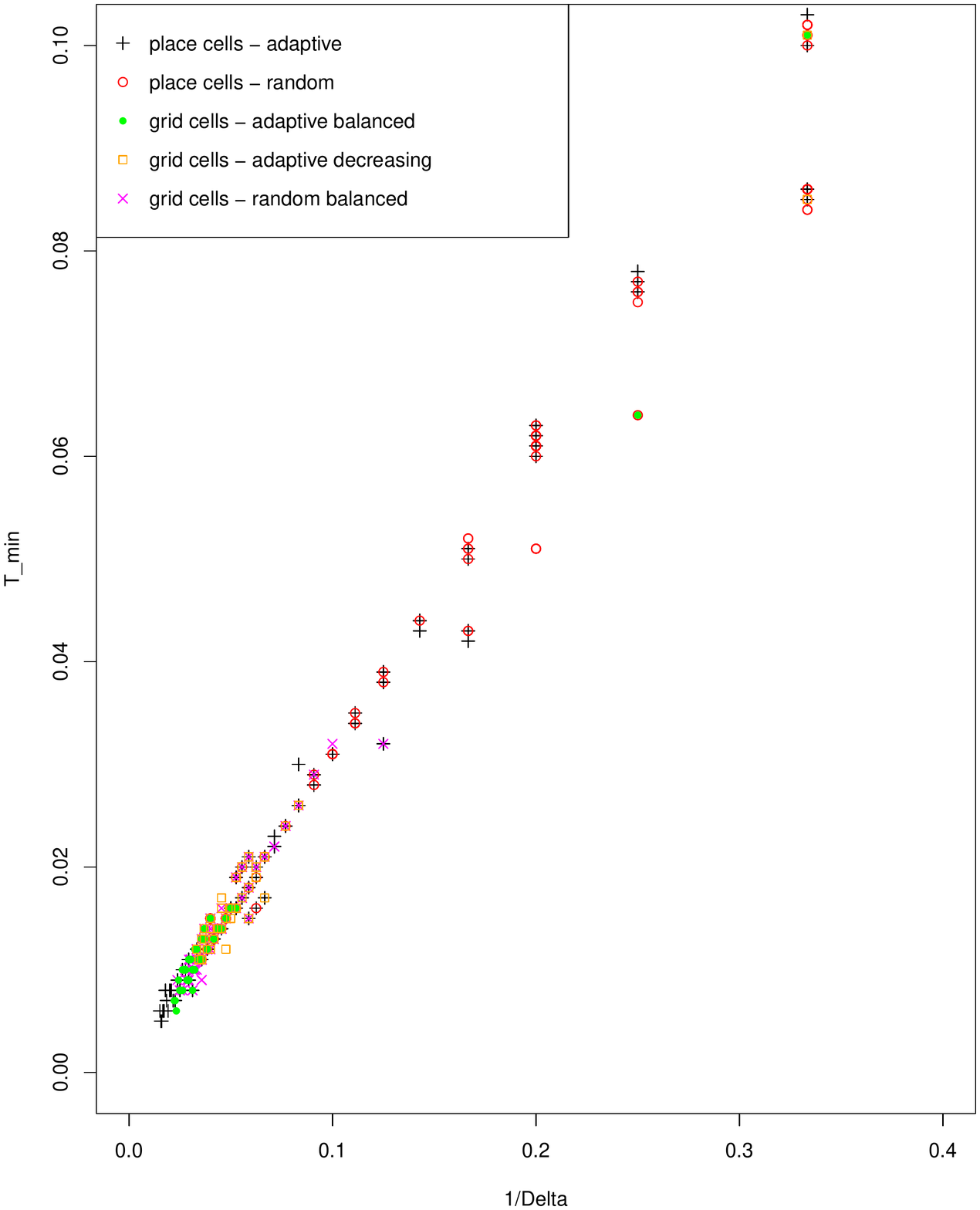}&\includegraphics[width=0.47\textwidth]{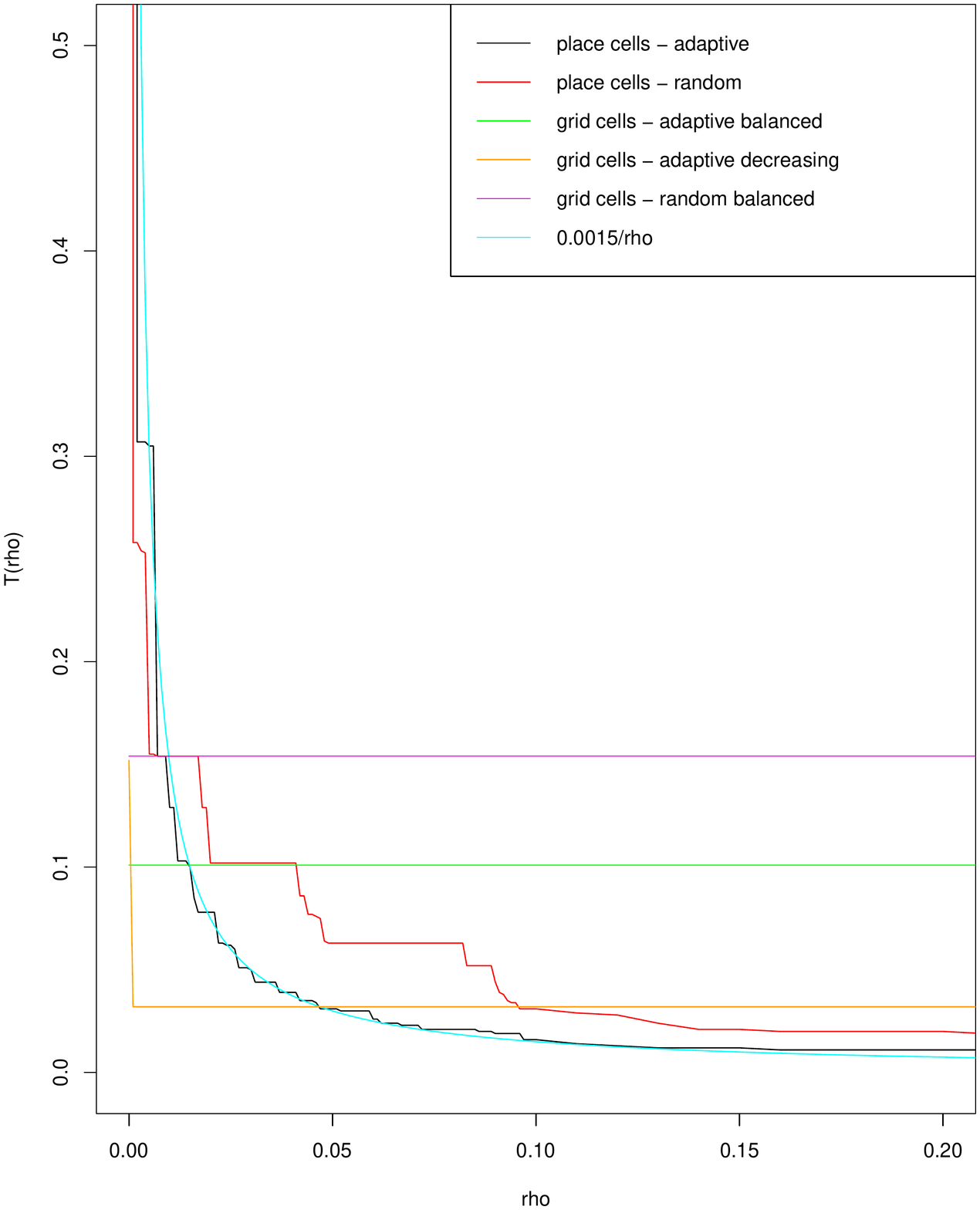}\\
    \end{tabular}
    \caption{\label{fignum} Discrimination time as a function of $\Delta^f_{s_1,s_2}$ and $\rho$. On the left, $T_{min}(f,s_1,s_2,\alpha)$ as a function of $1/\Delta^f_{s,s'}$ for the 5 different codes. On the right, $T_{min}(f,\rho,\alpha)$ as a function of $\rho$ for the 5 different codes.}
    \label{fig:my_label}
\end{figure}

For various possible $T$ in a grid from 0.001 to 20, we simulated 5000 times the test $\Psi$ given by \eqref{def:optimal_test_1} where we choose $s_1=s$ and $s_2=s'$ if $|I^f_s\setminus I^f_{s'}|\geq |I^f_{s'}\setminus I^f_{s}|$ and the reverse if this is not the case.
In each case, we found $T_{min}(f,s,s',\alpha)$ the smallest time in the grid that gives an error less than $\alpha$ (the error is evaluated by Monte Carlo method on the 5000 simulations).
In Figure \ref{fignum} on the left, we see $T_{min}(f,s,s',\alpha)$ as a function of $1/\Delta^f_{s,s'}$, with $\Delta^f_{s,s'}=\max\left\{|I^f_s\setminus I^f_{s'}|, |I^f_{s'}\setminus I^f_{s}|\right\}$. As 
derived in \eqref{Tmin}, we also see in the simulations that the same constant of proportionality holds whatever the code (this constant depending only on $\mu$ and $\alpha$) and that it was legitimate to study directly $1/\Delta^f_{s,s'}$.

It was not possible to compute $T(f,\rho)$ as defined by \eqref{Trho}, so we used the following quantity as a proxy:
\[T(f,s,\rho,\alpha)=\max_{\rho' \geq \rho \mbox{ in the grid}} T_{min}(f,s,s+\rho',\alpha),\]
for $s=1/3$.
In Figure \ref{fignum} on the right, we plotted $T(f,s,\rho,\alpha)$ as a function of $\rho$.
We see that place cells - adaptive are indeed following a curve in $1/\rho$ as expected and that place cells - random  have a similar behavior. In particular, with $n=100$ they cannot detect a $\rho$ of order $1/(2n)=0.005$. On the other hand, all grid cells can reach this range, because they can reach a much smaller range, at least  $2^{-20}$. 
Besides,  grid cells - (random or adaptive) balanced cannot have a time of detection which decreases when $\rho$ increases. Hence there is a point where the place cells system is quicker in discrimination time than the grid cells system. 

We tried a non balanced version of the grid cells (grid cells - adaptive decreasing), to make the discrimination time decreasing in $\rho$. However, as one can see on Figure \ref{fignum} on the right,  even if the discrimination time becomes decreasing in $\rho$, it reaches a plateau like behavior (up to logarithmic behavior that cannot be seen on the curve). Therefore, even this non balanced version of the grid cells  is still slower than the place cells system.

\section{Conclusion, discussion and Perspectives}
We have adopted a new point of view on rate coding : the testing point of view and use it to enhance differences between a place cell system and a grid cell system with the same number of neurons.

\paragraph{On the testing point of view.} The testing point of view is complementary to the estimation/information theory point of view developed originally by \cite{Brunel_1998} for place cells or other cells with simple receptor field. Indeed,  for place cells, Brunel and Nadal proved in a similar framework as the random code of the present work that the Fisher information is proportional to  $n$, the number of cells. The only main difference is that they used triangular codes $f$ instead of step functions with two parameters. If the Fisher information is proportional to $n$, it means that the standard deviation of the best estimator of the stimulus $s$ is in $n^{-1/2}$. We have proved that a similar random place cell system can discriminate between two positions as long as they are at least at distance of order $n^{-1/2}$. In this sense, both frameworks seem consistent. However, we have also proved (see Example 3) that another place cells code achieves a much smaller precision in $1/n$. Moreover the testing set-up allows us to capture an interesting phenomenon, which should take place in practice and might be tested via experiments. When two stimuli are very different, the discrimination time between both should be smaller.

\paragraph{On the place cell system.} We have shown that adaptivity in terms of $\rho$ (the distance between two stimuli) is possible. It means that not only certain codes have the ability to discriminate faster when the stimuli are further away but it also means that the rate $1/(n\rho)$ is up to constant not improvable by a place cell system. This is reached not only by very particular codes such as the one of Example 3 but also by random codes, for $\rho \geq c/\sqrt{n},$ for some positive constant $c$.

\paragraph{On the grid cell system.} As shown already by the study of the Fisher information \cite{Fiete_2008,Fiete_2011}, we can prove that grid cells can reach a precision that is much smaller than the one of place cells and which is exponentially decreasing with $n$. We have also shown that the rate $2^{-n}$ for the discrimination time is in fact an absolute lower bound for all kind of codes with only two values and that the grid cells in this sense are the ones able to achieve the smallest precision not only with respect to place cells, but with respect to any code. We have also been able to derive upper and lower bound on the minimal discrimination time for grid cells with a given number of cells per module and given period for each module. In particular, the distribution of the cells inside a module is derived from their place cell equivalent and a random uniform distribution inside a module leads to the best rate up to constant as soon as $\rho$ is large enough. This is coherent with the experiment of \cite{stensola_et_al_12} that has found no structure in the distribution of the center of the receptor field inside a given module.
Finally, for a fix $\rho$, particular balanced grid cells code are able to achieve the discrimination rate of $\log(1/\rho)/n$ which is much faster than place cells, but we have not been able to find a fix grid cells code whose discrimination rate would decrease when $\rho$ increases in an interesting range (that is past the detection range of the first module). Informally once $\rho<1/(2n_1)$, it seems that all codes are limited by what happens in the first module, that is all discrimination rates seem to be equal to $1/(2n_1)$. However, we have not been able to rigorously prove this fact.

\paragraph{Place cells versus grid cells.} As a consequence, and contrary to place cells, we have not been able to find a general grid cell adaptive code that would achieve a minimax discrimination rate that can only be smaller or equal to $\log(1/\rho)/n$ thanks to our results. Our tentative  non balanced codes as shown in the simulation were not satisfying. We do not know if adaptation is possible for general grid cells. If it is not possible, then there is definitely an advantage to have both systems (grid cells and place cells) at the same time. The simulations show a  compromise that can be made in using both systems at once (see Figure \ref{fignum} on the right). Indeed, grid cells can go at much smaller precision that place cells but are much slower when $\rho$ increase than place cells. In this sense, having a combination of both systems would allow a fast reaction time when stimuli are far away and a good reaction time even if the stimuli are very close.

\paragraph{Limitations and Open problems.}
We restricted ourselves to one dimensional, periodic stimuli (or circular maze). We do not know at the moment how to generalize it to larger dimension, especially if we want grid cells to have an hexagonal pattern. Moreover, \cite{stensola_et_al_12} have proved that the periods progress in a geometric manner (see also \cite{Wei_2015} for similar geometric progressions in 1d) but that the ratio is not an integer and so far our method is relying too strongly on periodicity to allow that. To go beyond this, we would need to consider stimuli with boundaries effect and if the adaptation to boundaries for grid cells have been described \cite{Krupic_2015} modeling precisely the boundary effect from a mathematical point of view is 
not straightforward.
Also one might want to add more realistic rate functions $f(s)$ that step functions, but we do not think this would massively change the rates we found as long as the shape of $f(s)$ is not authorized to vary much.
Finally, even in the 1d circular case, the problem of adaptivity and even the computation of the minimax discrimination rate of the general grid cells codes remains open. It means that we do not know what is the best choice in terms of discrimination rate for the number of cells per module, the scales or even the number of modules.

\section{Auxiliary Results}
In this section, we collect all auxiliary results used throughout the paper. We start with the following proposition.
  
\begin{lemma}
\label{Prop:Poi_Upper_Lower}
Let $X$ be a Poisson random variable with parameter $\theta>0.$ For any $x\geq 0$,
\begin{equation}
\label{Prop:Poi_upper}
\bP\left(X\geq \theta(1+x)\right)\leq \exp\left\{-\frac{\theta x^2}{2(1+x/3)}\right\},
\end{equation}
and for any $0\leq x\leq \theta,$
\begin{equation}
\label{Prop:Poi_lower}
\bP\left(X\leq \theta-x\right)\leq \exp\left\{-\frac{x^2}{2\theta}\right\}.
\end{equation} 
\end{lemma}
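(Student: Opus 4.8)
The plan is to prove both inequalities by the Chernoff (exponential Markov) method, exploiting the explicit Laplace transform of the Poisson law, $\bE[e^{tX}]=\exp\{\theta(e^{t}-1)\}$, valid for every $t\in\bR$.

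First I would treat the upper tail \eqref{Prop:Poi_upper}. For any $t>0$, Markov's inequality applied to $e^{tX}$ gives
\[
\bP(X\geq \theta(1+x))\leq e^{-t\theta(1+x)}\bE[e^{tX}]=\exp\{\theta(e^{t}-1-t(1+x))\}.
\]
The exponent is minimized at $t=\log(1+x)$ (which is $>0$ for $x>0$, the case $x=0$ being trivial), yielding
\[
\bP(X\geq \theta(1+x))\leq \exp\{-\theta\,((1+x)\log(1+x)-x)\}.
\]
It then remains to establish the scalar inequality $(1+x)\log(1+x)-x\geq \frac{x^{2}}{2(1+x/3)}$ for all $x\geq 0$. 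The cleanest route writes $(1+x)\log(1+x)-x=\int_0^x \frac{x-u}{1+u}\,du$ (immediate via the substitution $w=1+u$), observes that the probability measure on $[0,x]$ with density proportional to $(x-u)$ has barycenter $\int_0^x u(x-u)\,du\big/\int_0^x(x-u)\,du=x/3$, and applies Jensen's inequality to the convex map $u\mapsto (1+u)^{-1}$ to get $\int_0^x\frac{x-u}{1+u}\,du\geq \frac{x^{2}/2}{1+x/3}$.

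For the lower tail \eqref{Prop:Poi_lower}, for any $t>0$ Markov applied to $e^{-tX}$ gives
\[
\bP(X\leq \theta-x)\leq e^{t(\theta-x)}\bE[e^{-tX}]=\exp\{\theta(e^{-t}-1+t)-tx\}.
\]
Using the elementary bound $e^{-t}-1+t\leq t^{2}/2$, valid for all $t\geq 0$ (the difference vanishes together with its first derivative at $0$ and has nonnegative second derivative), the right-hand side is at most $\exp\{\theta t^{2}/2-tx\}$, and the choice $t=x/\theta\geq 0$ produces the announced $\exp\{-x^{2}/(2\theta)\}$; the restriction $x\leq\theta$ is not actually needed in this argument, the endpoint $x=\theta$ being the trivial identity $\bP(X=0)=e^{-\theta}$.

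The only step that is not purely mechanical is the scalar inequality $(1+x)\log(1+x)-x\geq x^{2}/(2(1+x/3))$ underlying the upper tail; the integral-representation-plus-Jensen route disposes of it immediately, though a direct calculus argument (clearing the denominator and differentiating twice, after checking the value and first derivative vanish at $x=0$) is an alternative.
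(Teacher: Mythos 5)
Your proof is correct, and for the upper tail \eqref{Prop:Poi_upper} it takes a genuinely different route from the paper. The paper writes $\bE\bigl[e^{\lambda(X-\theta)}\bigr]=e^{\theta\psi(\lambda)}$ with $\psi(\lambda)=e^{\lambda}-\lambda-1$, \emph{first} majorizes $\psi(\lambda)\leq \frac{\lambda^{2}}{2(1-\lambda/3)}$ on $(0,3)$, and \emph{then} optimizes over $\lambda$ (a Bernstein-type calculation leading to $h_1(x)=1+x-\sqrt{1+2x}$, which is finally bounded below by $x^{2}/(2(1+x))$). You instead optimize the Chernoff exponent \emph{exactly} — yielding the Bennett/Poisson bound $\exp\{-\theta((1+x)\log(1+x)-x)\}$ — and \emph{then} establish the scalar comparison $(1+x)\log(1+x)-x\geq \frac{x^{2}}{2(1+x/3)}$ via the integral representation $\int_0^x\frac{x-u}{1+u}\,du$ plus Jensen applied to the convex map $u\mapsto(1+u)^{-1}$ under the tent density $(x-u)\,du$ (barycenter $x/3$). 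Both orders of operations are standard; yours has the advantage that the Jensen step is arguably cleaner than the paper's inequality $h_1(x)\geq x^2/(2(1+x))$, and it makes transparent exactly where the characteristic $1+x/3$ denominator arises. The paper's route is the one more commonly seen in concentration-inequality texts (it parallels the proof of Bernstein's inequality, bounding $\psi$ by a sub-gamma profile). For the lower tail \eqref{Prop:Poi_lower} the two arguments are essentially the same: the paper's bound $\psi(\lambda)\leq\lambda^2/2$ for $\lambda\leq 0$, with the sup over $\lambda$, is exactly your $e^{-t}-1+t\leq t^{2}/2$ for $t\geq 0$ with the optimal $t=x/\theta$. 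Your side remark that the hypothesis $x\leq\theta$ is superfluous for \eqref{Prop:Poi_lower} is accurate (the event is empty when $x>\theta$, and your chosen $t=x/\theta$ is admissible for all $x\geq 0$).
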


\begin{proof}
Let $\psi(\lambda)=e^{\lambda}-\lambda-1$ for $\lambda\in\bR$ 
and notice that $\bE(e^{\lambda (X-\theta)})=e^{\theta\psi(\lambda)}$.
One can check that for all $0\leq \lambda<3$, we have
\[
\psi(\lambda)\leq \frac{\lambda^2}{2(1-\lambda/3)}.
\]
so that for any $t>0$,
\[
\bP(X\geq \theta+t)\leq \exp\left\{-\sup_{\lambda\in (0,3)}\left(t\lambda -\frac{\lambda^2\theta}{2(1-\lambda/3)}\right)\right\}=\exp\{-9 \theta h_1(t/3\theta)\},
\]
where $h_1(x)=1+x-\sqrt{1+2x}, x>0$. Since $h_1(x)\geq x^2/(2(1+x))$ it follows that for any $t>0,$
\[
\bP(X\geq \theta+t)\leq \exp\left\{-\theta \frac{(t/\theta)^2}{2(1+(t/3\theta))}\right\}.
\]
Taking $t=\theta x$ we obtain \eqref{Prop:Poi_upper}.


To prove inequality \eqref{Prop:Poi_lower}, one can use the fact that  $\psi(\lambda)\leq \lambda^2/2 $ for $\lambda\leq 0,$ to deduce for any $0\leq x\leq \theta$,
\[
\bP(X\leq \theta-x)\leq \exp\{-\sup_{\lambda>0}(\lambda x-\theta\lambda^2/2)\}=\exp\{-x^2/2\theta\},
\]
concluding the proof.
\end{proof}


\begin{lemma}
\label{equal_distance_mod}
Let $0<\lambda\leq 1 $. For all $s_1,s_2\in\bS^{1}$ such that $d(s_1,s_2)\leq \lambda/2$, it holds that $d(s_1,s_2)=d(s_1 \ \mod \ \lambda, s_2 \ \mod \ \lambda).$
\end{lemma}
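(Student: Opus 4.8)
The plan is to pass to arguments in $[0,1)$ and run one short, uniform case analysis. Throughout I use that $\lambda^{-1}$ is a positive integer, so that $1$ is an integer multiple of $\lambda$; this is the standing assumption on the scales $\lambda_i$ under which the lemma is applied, and it is genuinely needed (otherwise the identity already fails, for instance for $\lambda=2/5$ and the two points of argument $1/10$ and $9/10$).

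Set $x_i=\theta_{s_i}\in[0,1)$ and, after possibly swapping the two points, assume $x_1\le x_2$; write $\delta=x_2-x_1\in[0,1)$, so that $d(s_1,s_2)=\min(\delta,1-\delta)=:d$. Let $r_i\in[0,\lambda)$ be the argument of $s_i\bmod\lambda$ in $\bS^{\lambda}$. Since $r_i=x_i-\lfloor x_i/\lambda\rfloor\lambda$, we have $r_2-r_1\in(-\lambda,\lambda)$ and $r_2-r_1\equiv\delta\pmod\lambda$ as real numbers. Using $1\equiv0\pmod\lambda$, we get $\delta\equiv d\pmod\lambda$ when $d=\delta$, and $\delta=1-d\equiv-d\pmod\lambda$ when $d=1-\delta$; in both cases $r_2-r_1\equiv\pm d\pmod\lambda$ with $0\le d\le\lambda/2$ by hypothesis.

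The only elements of $(-\lambda,\lambda)$ congruent to $d$ or to $-d$ modulo $\lambda$ are $d$, $-d$, $d-\lambda$ and $\lambda-d$ (with obvious coincidences when $d\in\{0,\lambda/2\}$), so $r_2-r_1$ equals one of these four numbers; in each case a one-line computation using $d\le\lambda/2$ gives $\min\big(|r_2-r_1|,\ \lambda-|r_2-r_1|\big)=d$, and the left-hand side is precisely $d_\lambda(s_1\bmod\lambda,s_2\bmod\lambda)$, which proves the claim. The calculations are routine; the only delicate point is the ``wrap-around'' case $\delta>1/2$, where the integrality $\lambda^{-1}\in\bN$ is used. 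A coordinate-free reformulation of the same fact: $s\mapsto s\bmod\lambda$ is, after the $2\pi$ normalization, a local isometry $\bS^{1}\to\bS^{\lambda}$, hence $1$-Lipschitz, so $d_\lambda(s_1\bmod\lambda,s_2\bmod\lambda)\le d(s_1,s_2)\le\lambda/2$; were this strict, lifting a shortest geodesic on $\bS^{\lambda}$ to a path on $\bS^{1}$ starting at $s_1$ would produce an endpoint $s_2'\ne s_2$ with the same image as $s_2$, necessarily satisfying $d(s_2',s_2)\ge\lambda$, and the triangle inequality would force $d(s_1,s_2)>\lambda/2$, a contradiction.
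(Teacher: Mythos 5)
Your proof is correct and, at bottom, of the same elementary modular-arithmetic flavour as the paper's, but you organize it more systematically and cover a point the paper glosses over. The paper first reduces ``without loss'' to $0\le\theta_{s_1}\le 1/2$ and $\theta_{s_1}\le\theta_{s_2}\le\theta_{s_1}+1/2$ and then cases on whether $\theta_{s_2}$ sits in the same $\lambda$-block as $\theta_{s_1}$ or in the next one; the wrap-around configuration $\theta_{s_2}-\theta_{s_1}>1/2$ is silently folded into that reduction, which really needs a rotation by a multiple of $\lambda$ and hence already presupposes $\lambda^{-1}\in\bN$. You instead track $r_2-r_1\in(-\lambda,\lambda)$ and read off its four possible values from the congruence $r_2-r_1\equiv\pm d\pmod\lambda$, handling wrapping and non-wrapping uniformly with no preliminary normalisation. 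More importantly, you make the hypothesis $\lambda^{-1}\in\bN$ explicit and show by counterexample that it is not a convenience but a necessity: the lemma as written in the paper omits it, yet without it the statement is false (your $\lambda=2/5$ example, or $\lambda=2/3$ with $\theta_{s_1}=0.1$, $\theta_{s_2}=0.9$). It does hold in every place the lemma is invoked, since each scale $\lambda_i$ is required to divide $1$, but the statement should carry the assumption. The closing covering-space reformulation is a pleasant coordinate-free alternative; the essential facts used are that $s\mapsto s\bmod\lambda$ is a local isometry $\bS^1\to\bS^\lambda$ and that distinct points of a fibre are at distance at least $\lambda$ in $\bS^1$, so a strict contraction of distance plus a lifted minimizing geodesic would contradict $d(s_1,s_2)\le\lambda/2$.
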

\begin{proof}
It is sufficient to analyse the case $s_1,s_2\in \bS^{1}$ are such that 
$0\leq \theta{s_1}\leq 1/2,$   $\theta_{s_1}\leq \theta_{s_2}\leq \theta_{s_1}+1/2$ and $d(s_1,s_2)\leq \lambda/2$.
Write $\theta_{s_1}=k\lambda+r_1$ where $0\leq r_1=\theta_{s_1} \ \mod \ \lambda<\lambda.$

Suppose that $r_1+\lambda/2<\lambda$. In this case, we must that $r_1<\lambda/2$ and $\theta_{s_2}=k\lambda+r_2$ with $r_1\leq r_2=\theta_{s_2} \ \mod \ \lambda\leq r_1+\lambda/2$. Hence 
\[
\theta_{s_1 \ \mod \ \lambda}=\theta_{s_1} \ \mod \ \lambda =r_1\leq r_2=\theta_{s_2} \ \mod \ \lambda= \theta_{s_2 \ \mod \ \lambda}\leq r_1+\lambda/2
\]
so that
\[
d(s_1 \ \mod \ \lambda, s_2 \ \mod \ \lambda )=(\theta_{s_2 \ \mod \ \lambda}-\theta_{s_1 \ \mod \ \lambda})=(r_2-r_1)=d(s_1,s_2).\]

Now, it remains to consider the case $r_1+\lambda/2\geq \lambda$. In this case, we have that $r_1\geq \lambda/2$. If $\theta_{s_2}=k\lambda+r_2$ then we must have $r_1\leq r_2<\lambda$ so that
$d(s_1 \ \mod \ \lambda, s_2 \ \mod \ \lambda )=(\theta_{s_2 \ \mod \ \lambda}-\theta_{s_1 \ \mod \ \lambda})=r_2-r_1=d(s_1,s_2)$. Otherwise, $\theta_{s_2}=(k+1)\lambda +r_2$ where $r_2\leq r_1+\lambda/2-\lambda=r_1-\lambda/2$ so that 
$d(s_1 \ \mod \ \lambda, s_2 \ \mod \ \lambda )=r_2+\lambda-r_1=d(s_1,s_2)$ and the result follows.
\end{proof}

\begin{lemma}
\label{mult_distance_equal_mod}
Let $0<\sigma_1\leq \sigma_2\leq 1/2 $ such that $\sigma_2=p\sigma_1$ and $1=q\sigma_1$ where $p,q\in \bN$ are such that $q\geq p.$ 
For all $s_1,s_2\in\bS^{1}$ such that $d(s_1,s_2)=\sigma_2$, it holds that $s_1 \ \mod \ \sigma_1=s_2 \ \mod \ \sigma_1.$
\end{lemma}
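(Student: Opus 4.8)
The plan is to reduce the statement to an elementary fact about remainders of real numbers and then transport it to the circle using the definition of the modulus operation given in Section 2.1. First I would pass to arguments: write $\theta_1=\theta_{s_1},\theta_2=\theta_{s_2}\in[0,1)$ and assume without loss of generality that $\theta_1\le\theta_2$. The identity $d(s_1,s_2)=\min\!\left(\theta_2-\theta_1,\,1-(\theta_2-\theta_1)\right)$ together with the hypothesis $d(s_1,s_2)=\sigma_2$ forces $\theta_2-\theta_1\in\{\sigma_2,\,1-\sigma_2\}$.

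Next I would invoke the divisibility hypotheses. Since $\sigma_2=p\sigma_1$ and $1=q\sigma_1$ with $q\ge p$, both candidate values are nonnegative integer multiples of $\sigma_1$: indeed $\sigma_2=p\sigma_1$ and $1-\sigma_2=(q-p)\sigma_1$ with $q-p\in\bN$. Hence $\theta_2=\theta_1+k\sigma_1$ for some $k\in\{p,\,q-p\}\subseteq\bN$. Because $k$ is an integer, $\lfloor\theta_2/\sigma_1\rfloor=\lfloor\theta_1/\sigma_1\rfloor+k$, and therefore $\theta_2-\lfloor\theta_2/\sigma_1\rfloor\sigma_1=\theta_1-\lfloor\theta_1/\sigma_1\rfloor\sigma_1$, that is, $\theta_2\ \mathrm{mod}\ \sigma_1=\theta_1\ \mathrm{mod}\ \sigma_1$.

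Finally I would translate this back to the circle: by the definition of the modulus on circles, $\theta^{\sigma_1}_{s_i\,\mathrm{mod}\,\sigma_1}=\theta_{s_i}\ \mathrm{mod}\ \sigma_1$ for $i=1,2$, so the equality just obtained reads $\theta^{\sigma_1}_{s_1\,\mathrm{mod}\,\sigma_1}=\theta^{\sigma_1}_{s_2\,\mathrm{mod}\,\sigma_1}$, and the injectivity of the map $a\mapsto\theta_a$ yields $s_1\ \mathrm{mod}\ \sigma_1=s_2\ \mathrm{mod}\ \sigma_1$. There is no genuine obstacle here; the only point requiring a little care is the two-branch case distinction produced by the geodesic distance and the check that both branches land on integer multiples of $\sigma_1$, which is precisely where the hypotheses $\sigma_2=p\sigma_1$, $1=q\sigma_1$ and $q\ge p$ are used.
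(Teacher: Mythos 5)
Your proof is correct and follows essentially the same route as the paper's: pass to arguments, split on the two branches of the geodesic distance, observe that in either case $\theta_2-\theta_1$ is a nonnegative integer multiple of $\sigma_1$ (namely $p\sigma_1$ or $(q-p)\sigma_1$), and conclude that the residues modulo $\sigma_1$ agree. Your presentation is slightly more compact by collecting both branches into a single parameter $k\in\{p,q-p\}$, but the underlying argument is the same.
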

\begin{proof}
Suppose that $\theta_{1}\leq \theta_{2}$ and write $\theta_{s_1}=k\sigma_1+r$ where $0\leq r= \theta_{s_1} \ \mod \ \sigma_1<\sigma_1$. 
If  $0\leq \theta_{s_2}\leq \theta_{s_1}+1/2$ and $d(s_2,s_1)=\sigma_2$, then $\theta_{s_2}-\theta_{s_1}=d(s_2,s_1)=\sigma_2=p\sigma_1$ so that
$\theta_{s_2}=(p+k)\sigma_1+r$, implying that 
$\theta_{s_2} \ \mod \ \sigma_1=r=\theta_{s_1} \ \mod \ \sigma_1$.
Since $\theta_{s_2 \ \mod \ \sigma_1}=\theta_{s_2} \ \mod \ \sigma_1=\theta_{s_1} \ \mod \ \sigma_1=\theta_{s_1 \ \mod \ \sigma_1}$ it follows that $s_1 \ \mod \ \sigma_1=s_2 \ \mod \ \sigma_1.$

Now, if $1/2+\theta_{s_1}\leq \theta_{s_2}<1$ and $d(s_2,s_1)=\sigma_2$, then $1-(\theta_{s_2}-\theta_{s_1})=d(s_2,s_1)=\sigma_2=p\sigma_1$ so that $\theta_{s_2}=1+(k-p)\sigma_1+r=(q+k-p)\sigma_1$ and once again $\theta_{s_2} \ \mod \ \sigma_1=r=\theta_{s_1} \ \mod \ \sigma_1$, and the result follows.
\end{proof}

\begin{lemma}
\label{Lem:base_2}
For each $x\in [0,1)$, let $x_1=\lfloor 2x \rfloor$ and $x_{j}=\left\lfloor 2^{j}(x-\sum_{k=1}^{j-1}x_k/2^k)\right\rfloor $ for $j\geq 2.$ Then $x_j\in \{0,1\}$ for all $j\in\bN_{>0}$, $x=\sum_{j=1}^{\infty}x_j/2^j$ and $0\leq \sum_{k=j}^{\infty}x_k/x^k<1/2^{j-1}$ for any $j\in\bN_{>0}$. In particular, if $f_d=(f_{d,1},\ldots, f_{d,n})$ denotes the dyadic code defined in Example \ref{ex-dyadic_code}, then  for any $i\in [n]$, we have $f_{di}(s)=\mu$ if and only if $(\theta_{s})_i=0$.
\end{lemma}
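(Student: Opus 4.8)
The plan is to prove the three binary-expansion claims by a single induction and then to deduce the statement about the dyadic code by computing $\theta_s \bmod \lambda_i$ explicitly. First I would set $r_0 = x$ and, for $j \geq 1$, $r_j = x - \sum_{k=1}^{j} x_k/2^k$, so that by construction $x_j = \lfloor 2^j r_{j-1} \rfloor$. The key invariant is that $0 \leq 2^{j-1} r_{j-1} < 1$ for every $j \geq 1$, proved by induction on $j$: the base case is $0 \leq x < 1$, and if the invariant holds for $j$ then $2^j r_{j-1} = 2\cdot 2^{j-1} r_{j-1} \in [0,2)$, hence $x_j = \lfloor 2^j r_{j-1} \rfloor \in \{0,1\}$, while $2^j r_j = 2^j r_{j-1} - x_j$ is the fractional part of $2^j r_{j-1}$ and so lies in $[0,1)$, which is the invariant for $j+1$.

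From $0 \leq r_j < 2^{-j}$ one gets $r_j \to 0$, hence $x = \sum_{j \geq 1} x_j/2^j$; and since $\sum_{k \geq j} x_k/2^k = x - \sum_{k=1}^{j-1} x_k/2^k = r_{j-1}$, the bound $0 \leq \sum_{k \geq j} x_k/2^k < 2^{-(j-1)}$ is exactly the invariant. This settles the three abstract claims.

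For the dyadic code, fix $i \in [n]$ and put $x = \theta_s$. Writing $2^{i-1} x = \sum_{j=1}^{i-1} x_j 2^{i-1-j} + 2^{i-1} r_{i-1}$, the first term is an integer and the second lies in $[0,1)$ by the invariant, so $\lfloor 2^{i-1} x \rfloor = \sum_{j=1}^{i-1} x_j 2^{i-1-j}$ and therefore
\[
\theta_s \bmod \lambda_i = x - 2^{-(i-1)}\lfloor 2^{i-1} x \rfloor = r_{i-1} = \sum_{j \geq i} \frac{x_j}{2^j} = \frac{x_i}{2^i} + r_i .
\]
By Example \ref{ex-dyadic_code}, $\lambda_i = 2^{-(i-1)}$ and $\theta_{a_i} = 0 < 2^{-i} = \theta_{b_i}$, so the interval $\llb a_i, b_i \llb \subseteq \bS^{\lambda_i}$ consists exactly of the points with argument in $[0, 2^{-i})$; hence $f_{di}(s) = \mu$ if and only if $\theta_{s \bmod \lambda_i} = \theta_s \bmod \lambda_i$ belongs to $[0, 2^{-i})$. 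Since $r_i \in [0, 2^{-i})$ by the tail bound, the quantity $x_i/2^i + r_i$ lies in $[0, 2^{-i})$ when $x_i = 0$ and is at least $2^{-i}$ when $x_i = 1$; therefore $f_{di}(s) = \mu$ if and only if $(\theta_s)_i = x_i = 0$.

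The whole argument is elementary; the one place that genuinely uses the strict inequality in the tail bound is the final dichotomy, since the strictness of $r_i < 2^{-i}$ is what excludes a degenerate all-ones tail and makes the case split on $x_i$ clean. I do not anticipate a real obstacle.
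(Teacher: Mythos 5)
Your proof is correct and takes essentially the same route as the paper's: both establish the binary-expansion claims via the inductive invariant $r_j \in [0,2^{-j})$ (the paper phrases it as $x - \sum_{k \le j} x_k/2^k \in [0,1/2^j)$ but the bootstrap is the same), and both reduce the dyadic-code equivalence to the observation that $\theta_s \bmod \lambda_i = x_i/2^i + \sum_{k>i}x_k/2^k$ with the tail strictly below $2^{-i}$. Your phrasing with the explicit recursion $r_j$ and the named invariant is slightly tidier than the paper's, but there is no substantive difference in argument.
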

\begin{proof}
It is clear that $x_1\in \{0,1\}$. To show that $x_j\in \{0,1\}$, first observe that denoting $u:=x-\sum_{k=1}^{j-1}x_k/2^k$, we have $u-\lfloor 2^j u\rfloor/2^j\in [0,1/2^j).$ 
Since $\lfloor 2^j u\rfloor=x_j$, it follows then that
$x-\sum_{k=1}^{j}x_k/2^k=x-\lfloor 2^j u\rfloor/2^j\in [0,1/2^j)$. This implies that
$x=\sum_{j=1}^{\infty}x_k/2^k$
and also that
$0\leq 2^{j+1}(x-\sum_{k=1}^{j}x_k/2^k)=x-\lfloor 2^j u\rfloor/2^j)<2$ which ensures that $x_{j+1}\in\{0,1\}$ and also that $\sum_{k=j+1}^{\infty}x_k/2^k<2^{-j}$. 

It remains to show that $f_{di}(s)=\lambda$ if and only if $s_i=0$. To that end, first observe that $f_{di}(s)=f_{di}(s \ \mod \ \lambda_i)=\mu$ if and only if $\theta_{s}-\lfloor 2^{i-1}\theta_s\rfloor/2^{i-1}=\theta_{s} \ \mod \ \lambda_i=\theta_{s \ \mod \ \lambda_i} <1/2^i$. Now, since $\theta_{s}-\lfloor 2^{i-1}\theta_s\rfloor/2^{i-1}=\sum_{j=i+1}^{\infty}(\theta_{s})_j/2^j+(\theta_{s})_i/2^i$, it follows that $f_{di}(s)=\mu$ implies that $\sum_{j=i+1}^{\infty}(\theta_{s})_j/2^j+(\theta_{s})_i/2^i<1/2^i$, forcing $(\theta_{s})_i=0.$ On the other hand, if $(\theta_{s})_i=0$ then  
$\sum_{j=i+1}^{\infty}(\theta_{s})_j/2^j+(\theta_{s})_i/2^i=\sum_{j=i+1}^{\infty}(\theta_{s})_j/2^j<1/2^i$, where the inequality holds by the first part of the proof. Recalling that $\sum_{j=i+1}^{\infty}(\theta_s)_j/2^j+(\theta_s)_i/2^i=\theta \ \mod \ \lambda_i=\theta_{s \ \mod \ \lambda_i}$, it follows that $(\theta_s)_i=0$ implies that $f_{di}(s)=\mu$, concluding the proof. 
\end{proof}

\begin{lemma}
\label{prob_of_unif_interval_having_2_specific_points}
Let $A,B$ be independent and uniformly distributed points in $\bS^{\lambda}$ with $\lambda\in\bR_{>0}$. For any pair $s_1,s_2\in \bS^{\lambda}$, we have that
\[
\bP\left(s_1,s_2\in \llb A, B\llb \right)=\lambda^{-2}\left(\frac{\lambda^2}{2}-|\theta_{s_2}-\theta_{s_1}|\left(\lambda-|\theta_{s_2}-\theta_{s_1}|\right)\right).
\]
\end{lemma}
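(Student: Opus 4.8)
The plan is to translate everything into the $[0,\lambda)$ coordinate via the argument map, compute a planar area, and simplify. Under the identification $\bS^{\lambda}\ni a\mapsto\theta_a\in[0,\lambda)$, the pair $(\theta_A,\theta_B)$ is uniform on $[0,\lambda)^2$, so $\bP(s_1,s_2\in\llb A,B\llb)$ is $\lambda^{-2}$ times the Lebesgue measure of the set of $(a,b)\in[0,\lambda)^2$ for which $s_1,s_2\in\llb A,B\llb$ when $\theta_A=a$ and $\theta_B=b$. By relabelling $s_1,s_2$ I may assume $\theta_{s_1}\le\theta_{s_2}$ and put $\delta=\theta_{s_2}-\theta_{s_1}=|\theta_{s_2}-\theta_{s_1}|$; the diagonal $\{a=b\}$ is negligible (there $\llb A,B\llb=\emptyset$), so I split according to the two branches of the definition of $\llb A,B\llb$.

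On $\{a<b\}$ one has $\llb A,B\llb=\{s:\theta_s\in[a,b)\}$, so $s_1,s_2\in\llb A,B\llb$ exactly when $a\le\theta_{s_1}$ and $b>\theta_{s_2}$ (which forces $a<b$ automatically); this is a rectangle of area $\theta_{s_1}(\lambda-\theta_{s_2})$. On $\{b<a\}$ one has $\llb A,B\llb=\{s:\theta_s\notin[b,a)\}$, so $s_i\in\llb A,B\llb$ means $\theta_{s_i}<b$ or $\theta_{s_i}\ge a$; since $b<a$ these alternatives exclude each other, and inspecting the four combinations for $i=1,2$ shows that the combination ``$\theta_{s_1}\ge a$ and $\theta_{s_2}<b$'' is impossible (it would force $\theta_{s_2}<\theta_{s_1}$), while the remaining three give pairwise disjoint triangles: $\{\theta_{s_2}<b<a\}$ of area $(\lambda-\theta_{s_2})^2/2$, $\{b<a\le\theta_{s_1}\}$ of area $\theta_{s_1}^2/2$, and $\{\theta_{s_1}<b<a\le\theta_{s_2}\}$ of area $\delta^2/2$. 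Adding the four contributions,
\[
\lambda^2\,\bP(s_1,s_2\in\llb A,B\llb)=\theta_{s_1}(\lambda-\theta_{s_2})+\frac{(\lambda-\theta_{s_2})^2}{2}+\frac{\theta_{s_1}^2}{2}+\frac{\delta^2}{2}.
\]

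To finish I would set $u=\theta_{s_1}$ and $v=\lambda-\theta_{s_2}$, so that $u+v=\lambda-\delta$; the first three terms are $uv+\tfrac12 u^2+\tfrac12 v^2=\tfrac12(u+v)^2=\tfrac12(\lambda-\delta)^2$, hence the right-hand side equals $\tfrac12\big((\lambda-\delta)^2+\delta^2\big)=\tfrac{\lambda^2}{2}-\delta(\lambda-\delta)$, which is the claimed formula after dividing by $\lambda^2$. The only delicate point is the case $b<a$: one must verify that the three surviving regions are disjoint (clear from $b<a$) and that the fourth combination is genuinely empty; the rest is an elementary integration. As a sanity check, the expression is invariant under $\delta\mapsto\lambda-\delta$, consistent with it being a function of the geodesic distance $d_\lambda(s_1,s_2)$ alone.
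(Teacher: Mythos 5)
Your proof is correct and follows essentially the same route as the paper: both decompose the event into the case $\theta_A<\theta_B$ (one rectangle) and the case $\theta_B<\theta_A$ (three triangles), yielding the same four contributions $\theta_{s_1}(\lambda-\theta_{s_2})$, $(\lambda-\theta_{s_2})^2/2$, $\theta_{s_1}^2/2$, $\delta^2/2$. The only (cosmetic) difference is that you finish the algebra with the substitution $u=\theta_{s_1}$, $v=\lambda-\theta_{s_2}$ rather than by direct expansion.
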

\begin{proof}
Suppose that $\theta_{s_1}\leq \theta_{s_2}$ and note that
\begin{multline*}
\bP\left(s_1,s_2\in \llb A, B\llb \right)=\bP\left(\theta_A\leq \theta_{s_1},\theta_{s_2}< \theta_B<\lambda\right)+\bP\left( \theta_{s_1}\leq \theta_A<\theta_{s_2}, \theta_{s_1}<\theta_B<\theta_{A}\right)\\+\bP\left(\theta_B<\theta_A\leq \theta_{s_1} \right)+\bP\left(\theta_{s_2}\leq \theta_A<\lambda, \theta_{s_2}<\theta_{B}< \theta_A\right).
    \end{multline*}
We now compute each one of the probabilities appearing in the right-hand side of the equality above. Since $\theta_A$ and $\theta_B$ are independent and uniformly distributed on $[0,\lambda)$, one can check that
\[
\begin{cases}
\bP\left(\theta_A\leq \theta_{s_1},\theta_{s_2}< \theta_B<\lambda\right)=\frac{\theta_{s_1}}{\lambda}\frac{(\lambda-\theta_{s_2})}{\lambda},\\
\bP\left(\theta_B<\theta_A\leq \theta_{s_1} \right)=\frac{1}{\lambda^2}\int_{0}^{\theta_{s_1}}(\theta_{s_1}-\theta_b)d\theta_b=\theta^2_{s_1}/(2\lambda^2),\\
\bP\left( \theta_{s_1}<\theta_B< \theta_A<\theta_{s_2}\right)=\frac{1}{\lambda^{2}}\int_{\theta_{s_1}}^{\theta_{s_2}}(\theta_{s_2}-\theta_{b})d\theta_b=(\theta_{s_2}-\theta_{s_1})^2/(2\lambda^{2}),\\
\bP\left(\theta_{s_2}< \theta_B<\theta_A<\lambda,\right)=\frac{1}{\lambda^{2}}\int_{\theta_{s_2}}^{\lambda}(\lambda-\theta_{b})d\theta_b=(\lambda-\theta_{s_2})^2/(2\lambda^2.)
\end{cases}
\]
Summing these fours terms, we obtain that 
\begin{align*}
\bP\left(s_1,s_2\in \llb A, B\llb \right)&=\lambda^{-2}\left(\theta_{s_1}(\lambda-\theta_{s_2})+\theta^2_{s_1}/2+(\theta_{s_2}-\theta_{s_1})/2+(\lambda-\theta_{s_2})^2/2\right)\\
=&\lambda^{-2}\left(\lambda^2/2-\lambda(\theta_{s_2}-\theta_{s_1})+(\theta_{s_2}-\theta_{s_1})^2\right)\\
=& \lambda^{-2}\left(\lambda^2/2-|\theta_{s_2}-\theta_{s_1}|(\lambda-|\theta_{s_2}-\theta_{s_1}|)\right),
\end{align*}
and the result follows.
\end{proof}

\bibliographystyle{amsplainhyper_m}  
\bibliography{bibliomna}

\ACKNO{
This research  was supported by the French government, through CNRS, the UCA$^{Jedi}$
and 3IA
C\^ote d’Azur Investissements d’Avenir managed by the National Research
Agency (ANR-15 IDEX-01 and ANR-19-P3IA-0002), directly by the ANR
project ChaMaNe (ANR-19-CE40-0024-02), by the interdisciplinary Institute for Modeling in Neuroscience and Cognition (NeuroMod) of Universit\'e C\^ote d'Azur and by the FAPESP project {\em Research, Innovation and
Dissemination Center for Neuromathematics} (grant 2013/07699-0).
G.O. thanks FAPERJ (grant E-26/201.397/2021) and CNPq (grants 303166/2022-3 and 432310/2018-5) for financial support.
GO also thanks the support of the Institut Henri Poincaré,  LabEx CARMIN and Institut des Hautes Études Scientifiques (grants UAR 839 CNRS-Sorbonne Université and ANR-10-LABX-59-01) during his stay in France where substantial parts of this research were done.

The authors thank Fabrizio Capitano for fruitful discussions on place cells and grid cells.
}

\nocite{*}
\end{document}